\def\dfr#1#2{\displaystyle{\frac{#1}{#2}}}
\renewcommand{\vec}[1]{\mbox{\boldmath \small $#1$}}
 \def\bga{\begin{array}} \def\eda{\end{array}}
\def\dfr#1#2{\displaystyle{\frac{#1}{#2}}} 
\def\Ronu#1{\uppercase\expandafter{\romannumeral#1}}
\newtheorem{Def}{Definition}[section]
\newtheorem{example}{Example}[section]
\newtheorem{lemma}{Lemma}[section]
\newtheorem{remark}{Remark}[section]
\numberwithin{equation}{section}
\numberwithin{figure}{section}
\numberwithin{table}{section}
\numberwithin{thm}{section}
\newenvironment{proof}[1][Proof]{\begin{trivlist}
\item[\hskip \labelsep {\bfseries #1}]}{\end{trivlist}}
\renewcommand{\qed}{\hfill \nobreak \ifvmode \relax \else
      \ifdim\lastskip<1.5em \hskip-\lastskip
      \hskip1.5em plus0em minus0.5em \fi \nobreak
      \vrule height0.75em width0.5em depth0.25em\fi}
\begin{document}
\begin{frontmatter}
\title{High-order accurate physical-constraints-preserving finite difference WENO schemes for special relativistic hydrodynamics}
\author[WKL]{Kailiang Wu},
\ead{wukl@pku.edu.cn}
\address[WKL]{LMAM, School of Mathematical Sciences, Peking University,
	Beijing 100871, China}
\author[THZ]{Huazhong Tang\corauthref{cor}}
\corauth[cor]{Corresponding author.}
\ead{hztang@math.pku.edu.cn}
\address[THZ]{HEDPS, CAPT  \& LMAM, School of Mathematical Sciences, Peking University,
	Beijing 100871, China}

\date{}
\maketitle

\begin{abstract}
The paper develops high-order accurate physical-constraints-preserving
finite difference WENO schemes
for special relativistic hydrodynamical (RHD) equations,
built on the local Lax-Friedrich splitting,
the WENO reconstruction, the physical-constraints-preserving
flux limiter, and the high-order strong stability preserving time discretization.
They are extensions of the positivity-preserving finite difference WENO schemes for the non-relativistic Euler equations \cite{Hu2013}.
However, developing physical-constraints-preserving methods  for the RHD system becomes much more difficult than the non-relativistic case
because of the strongly coupling between the RHD equations,
 no explicit formulas of the primitive variables and the flux vectors with respect to the conservative vector,
and  one more physical constraint for the fluid velocity
in addition to the positivity of the rest-mass density and the pressure.
The key is to prove the convexity and other properties
of the admissible state set and discover a concave function with respect to
the conservative vector instead of the pressure which
is  an important ingredient to enforce the positivity-preserving property for
the non-relativistic case.

Several one- and two-dimensional numerical examples  are used to demonstrate  accuracy, robustness, and effectiveness of the proposed physical-constraints-preserving schemes in solving  RHD  problems with large Lorentz factor, or strong discontinuities, or low
rest-mass density or pressure etc.

\end{abstract}

\begin{keyword}
Finite difference scheme;
Physical-constraints-preserving;
High-order accuracy;
Weighted essentially non-oscillatory;
Relativistic hydrodynamics;
Lorentz factor.
\end{keyword}
\end{frontmatter}


\section{Introduction}
\label{sec:intro}

The paper is concerned with developing high-order accurate numerical methods for special relativistic hydrodynamical (RHD) equations.
In the laboratory frame, the $(d+1)$-dimensional space-time RHD equations
may be   written into a system of conservation laws as follows
\begin{equation}\label{eqn:coneqn3d}
\displaystyle\frac{\partial \vec{U}}{\partial t} +
\sum^d_{i=1}\frac{\partial \vec{F}_i(\vec{U})}{\partial x_i}=0,
\end{equation}
where   $\vec{U}$ and $\vec{F}_i$ are the conservative  vector and the flux
in the
$x_i$-direction, respectively,  defined by
\begin{align*}
\vec{U} =& \big(D, m_1, \cdots, m_d, E\big)^T,\\
\vec{F}_i =& \big(Dv_i, m_1 v_i + p \delta_{1,i},  \cdots,  m_d v_i  + p \delta_{d,i}, m_i\big)^T, ~~i=1,\cdots, d,
\end{align*}
with the mass density $D=\rho W$,
the momentum density vector  $\vec m=Dh W \vec v$, and the energy density $E=DhW-p$, respectively.
Here $\rho$, $p$, and $\vec v= (v_1,  \cdots,  v_d)^T$ denote the rest-mass density, the kinetic pressure, and
the fluid velocity respectively,
 $W=1/\sqrt{1-v^2}$ is the Lorentz factor with $v=(v_1^2+\cdots +v_d^2)^{1/2}$,
and
$h$ denotes the specific enthalpy
 defined by
 \begin{equation}
 \label{eq:h}
 h = 1 + e + \dfr{p}{\rho},
 \end{equation}
with units in which the speed of light is equal to one,
and $e$ is the specific internal energy.

 The  system \eqref{eqn:coneqn3d} has taken into account the relativistic description of fluid dynamics
 where the fluid flow is at nearly speed of light in vacuum and appears  in investigating numerous astrophysical phenomena
from stellar to galactic scales, e.g.   formation of black holes, coalescing
neutron stars, core collapse super-novae, X-ray binaries, active galactic nuclei, super-luminal jets
and gamma-ray bursts, etc.
However, it still involves highly nonlinear equations due to the Lorentz factor
so that its analytic treatment is extremely difficult.
A powerful and primary approach to improve our understanding of the
physical mechanisms in RHDs is through numerical simulations.
Comparing to the non-relativistic case, the numerical difficulties
are coming from strongly nonlinear coupling between the RHD equations, which leads to
no explicit expression of  the primitive variables
$\vec V=(\rho,\vec v,p)^T$ and the flux vector $\vec F_i$ in terms of $\vec U$,
and some physical constraints such as $\rho>0$, $p>0$,  $E\geq D$,
and $1> v$ etc.
Its numerical study  did not attract considerable attention  until 1990s.

The first attempt to numerically solve the RHD equations was made by  using a finite difference method
with the artificial viscosity technique in Lagrangian or Eulerian coordinates, see  \cite{May-White1966,May-White1967,Wilson:1972,Wilson:1979}.
After that, various modern shock-capturing methods
were gradually developed for the special RHDs since 1990s, e.g.
 the HLL (Harten-Lax-van Leer-Einfeldt) method \cite{Schneider:1993},
 the two-shock approximation solvers \cite{Balsara:1994,DaiWood:1997}, the flux corrected transport method \cite{Duncan:1994}, the Roe solver \cite{EulderinkMel:1995}, the HLLC (Harten-Lax-van Leer-Contact) approximate Riemann solver \cite{MignoneBodo:2005},
the  flux-splitting method based on  the spectral decomposition \cite{DonatFont:1998},
 Steger-Warming flux vector splitting method \cite{ZhaoHeTang2014},
and  the kinetic schemes  \cite{YangBeam1997,Kunik2004,Qamar2004}.
Besides those, high-order accurate schemes for the RHD system were also studied,
e.g. ENO (essentially non-oscillatory) and weighted ENO methods \cite{DolezalWong:1995,ZannaBucciantini:2002,Tchekhovskoy2007},
the piecewise parabolic methods  \cite{Marti3,Mignoneetal:2005},
the space-time conservation element and solution element method \cite{Qamar2012},
and the discontinuous Galerkin (DG) method \cite{Radice2011},
the Runge-Kutta DG methods with WENO (weighted ENO) limiter  \cite{ZhaoTang2013},
the direct Eulerian GRP schemes  \cite{YangHeTang2011,YangTang2012,WuYangTang2014},
the adaptive moving mesh methods \cite{HePeng2011,HeTang2011,HeTang2012},
and genuinely multi-dimensional finite volume local evolution Galerkin method \cite{WuTang2014}.
The readers are also referred to the early review articles
 \cite{Ibanez-Marti1999,MME:2003,Font2008}.

The above existing works do not preserve the positivity of the rest-mass density and the pressure and  the bounds of the fluid velocity. Although they have  been  used to simulate some RHD flows successfully, there exists  the big risk of failure when they are applied to RHD problems with large Lorentz factor, or low density or pressure, or strong discontinuity,
because the negative density or pressure, or the larger velocity  than the speed of light
 may be obtained so that the calculated eigenvalues of the Jacobian matrix   become imaginary, in other words,
 the discrete problem becomes ill-posed.
In practice,  the nonphysical numerical solutions are usually simply replaced with
a ``close'' and ``physical'' one
by performing recalculation with more diffusive schemes and smaller CFL number until the numerical solutions become physical, see e.g. \cite{ZhangMacfadyen2006,Hughes2002}.
Obviously,  such approch is not  scientifically reasonable to a certain extent,
and it is of great significance to develop  high-order accurate numerical schemes, whose solutions
 satisfy the intrinsic physical constraints.
 Recently,  there exist some works on
   the maximum-principle-satisfying schemes for scalar hyperbolic conservation law
  \cite{zhang2010,zhang2012a},
 the positivity-preserving schemes
 for the non-relativistic Euler equations
 with or without source terms \cite{zhang2012,zhang2010b,zhang2011},
the positivity-preserving  well-balanced schemes  for the shallow water equations \cite{Xing2010},
the positivity preserving semi-Lagrangian DG method for the Vlasov-Poisson system \cite{Qiug2011},
and Lagrangian method with positivity-preserving limiter for multi-material compressible flow \cite{Cheng2014}.
 A class of the parametrized maximum principle preserving and positivity-preserving flux limiters
were also  well-developed  via decoupling some linear or nonlinear constraints
for the high order accurate schemes for
scalar hyperbolic conservation laws \cite{Xu_MC2013,Liang2014},
nonlinear convection-dominated diffusion equations \cite{JiangXu2013},
compressible Euler equations \cite{XiongQiuXu2014} and
ideal magnetohydrodynamical equations \cite{Christlieb} as well as simulating incompressible flows \cite{XiongQiuXu2013}.
A survey of the maximum-principle-satisfying or positivity-preserving
 high-order schemes is presented in \cite{zhang2011b}.

 The aim of the paper is to do the first attempt in the aspect of
  developing the high-order accurate
 physical-constraints-preserving finite difference schemes
 for special RHD equations \eqref{eqn:coneqn3d}.
Such attempt is nontrivial in comparison of the non-relativistic case,
because of the strongly nonlinear coupling between the RHD equations due to the Lorentz factor,
 no explicit formulas of $\vec V$ and   $\vec F_i$ with respect to $\vec U$,
 and  one more physical constraint for the fluid velocity
 in addition to the positivity of the rest-mass density and the pressure.
 The key will be to prove the convexity and other properties
 of the admissible state set and discover a concave function with respect to
 the conservative vector $\vec U$ instead of the pressure, which
 is  an important ingredient to enforce the positivity-preserving property for
 the non-relativistic case.
The paper is organized as follows.
Section \ref{sec:GoEq} discusses the admissible state set and its properties
of the special RHD equations.
 They play a pivotal role in studying the physical-constraints-preserving property
 of numerical schemes.
Section \ref{sec:scheme} presents the
high-order accurate  physical-constraints-preserving  finite difference WENO
schemes for the RHD equations.
Section \ref{sec:1Dscheme} considers detailedly one-dimensional case
 with  spatial discretization in Section \ref{sec:reviweWENO}
and time discretization in Section \ref{sec:Time}.
Section \ref{sec:2Dscheme}  gives the 2D extension of the above scheme
and apply it to the 2D  axisymmetric case.
Section \ref{sec:experiments}
gives several 1D and 2D numerical experiments to demonstrate   accuracy,
robustness and effectiveness of the proposed schemes for relativistic problems with large Lorentz factor,
 or strong discontinuities, or low
rest-mass density or pressure etc.
Section \ref{sec:conclude} concludes the paper with several remarks.


\section{Properties of RHD equations}
\label{sec:GoEq}

This section discusses the admissible state set and its properties for
the RHD equations \eqref{eqn:coneqn3d}.
Throughout the paper, the equation of state (EOS) will be restricted  to the  $\Gamma$--law
\begin{equation}\label{gamma-law}
p = (\Gamma-1)\rho e,
\end{equation}
with the adiabatic index $\Gamma \in (1,2]$.
Such restriction on $\Gamma$  is reasonable
 under the compressibility assumptions, see
  \cite{Cissoko1992}.

The RHD system \eqref{eqn:coneqn3d} with \eqref{gamma-law} is identical  to the $d$-dimensional non-relativistic Euler equations in the formal structure, and also satisfies  the
rotational invariance and the homogeneity as well as the hyperbolicity in time, see \cite{ZhaoHeTang2014}.
The momentum equations in \eqref{eqn:coneqn3d} are only with a Lorentz-contracted momentum
density replacing $\rho v_i$ in the non-relativistic Euler equations.
When the fluid velocity is much smaller than the speed of light (i.e.
$ v\ll 1$) and
the velocity of the internal (microscopic) motion of the fluid particles is small, the system \eqref{eqn:coneqn3d}
reduces to the non-relativistic Euler equations.
The $(d+2)$ real eigenvalues of the Jacobian matrix  $\vec A_i (\vec U) = {\partial \vec F_i (\vec U)}/{\partial \vec U}$
for  \eqref{eqn:coneqn3d} with \eqref{gamma-law} are
\begin{align*}
 \lambda _i^{(1)}  &= \frac{{v_i (1 - c_s^2 ) - c_s W^{ - 1} \sqrt {1 - v_i^2  - (v^2  - v_i^2 )c_s^2 } }}{{1 - v^2 c_s^2 }}, \\
 \lambda _i^{(2)}  &= \cdots  = \lambda _i^{(d+1)}  = v_i , \\
 \lambda _i^{(d+2)}  &= \frac{{v_i (1 - c_s^2 ) + c_s W^{ - 1} \sqrt {1 - v_i^2  - (v^2  - v_i^2 )c_s^2 } }}{{1 - v^2 c_s^2 }},
\end{align*}
for $i=1,\cdots, d$, where the local sound speed $c_s = \sqrt{\frac{\Gamma p}{\rho h}}$.
However, relations between the laboratory quantities ($D$,
$m_i$, and  $E$)
 and the quantities in the local rest frame ($\rho$,
  $v_i$, and  $e$) introduce a strong coupling
 between the hydrodynamic equations and pose more additional
 numerical difficulties than the non-relativistic case.
 For example, the flux vectors $\vec F_i$ and the primitive
 variable vector $\vec V:= (\rho, \vec v, p)^T$
 can not be formulated in  explicit forms of the conservative vector $\vec U$, and
 some  constraints (e.g. $\rho> 0$, $p> 0$, and $ 1>v$, etc.) should be fulfilled by
 the physical solution $\vec U$.

\begin{Def}
 The
{\em set of (physically) admissible states} of the RHD equations
\eqref{eqn:coneqn3d} with \eqref{gamma-law} is defined by
\begin{equation}\label{EQ-adm-set01}
{\mathcal G} = \left\{ { \left. \vec U = (D,\vec m,E)^T \right| {\rho (\vec U) > 0,~p(\vec U) > 0,~ 1>v(\vec U)  } } \right\}.
\end{equation}
\end{Def}

Such definition is very natural and intuitive  but
unpractical when giving the value of the conservative vector $\vec U$,
because there is no explicit expression of the primitive
variable $\vec V= (\rho, \vec v, p)^T$
in terms to  $\vec U$ for the system \eqref{eqn:coneqn3d}.
  It is this feature that makes the discussions on the admissible state and the
physical-constraints-preserving schemes presented later nontrivial or even challenging for RHD equations
\eqref{eqn:coneqn3d}.
In practical computations, for given $\vec U=(D,
\vec m, E)^T$,
one has to (iteratively) solve
a nonlinear algebraic equation such as
 \begin{equation}\label{eq:solveP}
  E + p = DW + \displaystyle\frac{\Gamma}{\Gamma-1} pW^2,
 \end{equation}
by any standard root-finding algorithm  to get the pressure $p(\vec U)$.  Then $v_i$ and $\rho$ are sequentially calculated by
\begin{equation}\label{eq:solveVRHO}
v_i(\vec U) = \frac{m_i}{{E + p(\vec U)}},\quad \rho (\vec U) = D\sqrt {1 -  v^2(\vec U)}.
\end{equation}
Note that the Lorentz factor $W$ in \eqref{eq:solveP}
has been rewritten into $\left(1- m^2/(E+p)^2\right)^{-1/2} $ with $ m:= (m_1^2+\cdots+m_d^2)^{1/2}$.
Existence of the unique positive solution to the pressure equation \eqref{eq:solveP} is given in the proof of Lemma \ref{lam:equDef}.
It is worth mentioning that different from the non-relativistic case,
such positive solution $p(\vec U)$
is not a concave function of $\vec U$ generally, see Fig. \ref{fig:phi_lambda}.
\begin{figure}[htbp]
  \centering
  {\includegraphics[width=0.56\textwidth]{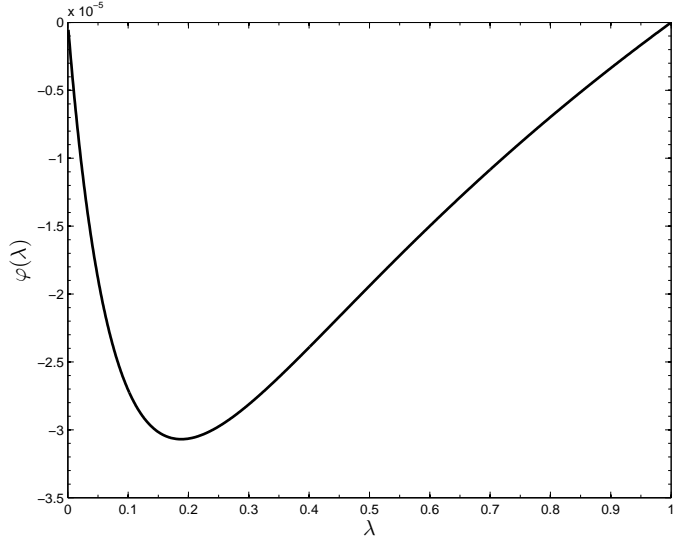}}
  \caption{\small The function
  $\varphi (\lambda):=p(\lambda \vec U^{(1)}  + (1 - \lambda )\vec U^{(0)} ) - \lambda p( \vec U^{(1)}) -(1 - \lambda )p(\vec U^{(0)} )$ with $\vec U^{(0)}=(2,1.2,8)^{\rm T} \in {\mathcal G}$ and $\vec U^{(1)}=(2,5,35)^{\rm T} \in {\mathcal G}$.
  The value of $\varphi (\lambda)$ is always
  less than zero when $\lambda \in (0,1)$.
 }
  \label{fig:phi_lambda}
\end{figure}

A practical and equivalent definition of  ${\mathcal G}$ is given as follows.

\begin{lemma}\label{lam:equDef}
The admissible set ${\mathcal G}$ defined in \eqref{EQ-adm-set01}
is equivalent to the following set
\begin{equation}\label{eq:newG}
{\mathcal G}_1 = \left\{ {\vec U = (D,\vec m,E)^T \left|
{D > 0,~q(\vec U):=E-\sqrt {D^2  +   m^2 } >0} \right.} \right\}.
\end{equation}
\end{lemma}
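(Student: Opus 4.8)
The plan is to prove the two inclusions $\mathcal{G} \subseteq \mathcal{G}_1$ and $\mathcal{G}_1 \subseteq \mathcal{G}$ separately, with the bulk of the work lying in the second inclusion, since that is where one must actually produce a positive pressure from the bare inequalities $D>0$ and $q(\vec U)>0$.

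For $\mathcal{G}\subseteq\mathcal{G}_1$: given $\vec U\in\mathcal{G}$ we have $\rho>0$, $p>0$, $v<1$, hence $W\geq 1$ is finite and $D=\rho W>0$ immediately. For the $q$-inequality, I would start from the definitions $E = \rho h W^2 - p$ and $\vec m = \rho h W^2 \vec v$, so that $m = \rho h W^2 v$ and $D = \rho W$. Then $E^2 - D^2 - m^2 = (\rho h W^2 - p)^2 - \rho^2 W^2 - \rho^2 h^2 W^4 v^2$. Using $W^2 v^2 = W^2 - 1$ this collapses to something like $(\rho h W^2-p)^2 - \rho^2 h^2 W^2 - \rho^2 W^2(1-h^2)$; since $h = 1 + e + p/\rho \geq 1 + \Gamma p/((\Gamma-1)\rho) > 1$ (using the $\Gamma$-law $e = p/((\Gamma-1)\rho)$), one checks $E>0$ and then that $E^2 - (D^2+m^2)$ is a positive multiple of $p$ (or at least strictly positive), giving $q(\vec U) = E - \sqrt{D^2+m^2} > 0$. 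The routine algebra here I would not grind through, but the key identity is $W^2v^2 = W^2-1$ and the key structural fact is $h>1$.

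For $\mathcal{G}_1\subseteq\mathcal{G}$: this is the substantive direction and requires first establishing existence and uniqueness of a positive root $p$ of the pressure equation \eqref{eq:solveP}, rewritten with $W = (1 - m^2/(E+p)^2)^{-1/2}$. I would define, for $p>0$ (and noting we need $E+p>m$ for $W$ to be real, which will follow), the function $\Phi(p) := E + p - DW(p) - \frac{\Gamma}{\Gamma-1}pW(p)^2$ and analyze its monotonicity and boundary behavior. As $p\to\infty$, $W(p)\to 1$ and the $\frac{\Gamma}{\Gamma-1}pW^2$ term dominates, so $\Phi(p)\to-\infty$; as $p\to 0^+$, $W(p)\to W_0 := (1-m^2/E^2)^{-1/2}$ which is finite and $>1$ precisely because $q(\vec U)>0$ forces $E>\sqrt{D^2+m^2}>m$, and then $\Phi(0^+) = E - DW_0 = E - D E/\sqrt{E^2-m^2}$; one shows this is positive exactly when $E^2 - m^2 > D^2$, i.e. $q>0$ (since $E>0$). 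Monotonicity of $\Phi$ (strictly decreasing) then gives a unique positive root $p(\vec U)$, hence $p(\vec U)>0$. With $p>0$ in hand, \eqref{eq:solveVRHO} gives $v_i = m_i/(E+p)$, so $v = m/(E+p) < m/(E) \cdot(\text{something})$ — more carefully, $v<1 \iff m < E+p$, which holds since $E>m$ and $p>0$; and $\rho = D\sqrt{1-v^2} > 0$ since $D>0$ and $v<1$. Thus $\vec U\in\mathcal{G}$.

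The main obstacle I expect is the boundary/monotonicity analysis of $\Phi(p)$ at $p\to 0^+$: one must verify that the sign of $\Phi(0^+) = E - DW_0$ is governed exactly by the condition $q(\vec U)>0$ and not merely by $D>0$, and simultaneously that $W_0$ is well-defined (i.e. $E>m$) — both of which hinge on extracting $E^2 > D^2 + m^2 \geq m^2$ from $q>0$ together with $E>0$ (the latter itself needing a small argument, e.g. $E = q + \sqrt{D^2+m^2} > 0$). One should also be slightly careful that the root found has $E + p(\vec U) > m$ so that the Lorentz factor and velocities in \eqref{eq:solveVRHO} are genuinely admissible; this is automatic from $E>m$ and $p>0$. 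Showing $\Phi$ is strictly monotone over all $p>0$ is a short computation ($W(p)$ is increasing toward... actually decreasing as $p$ grows, since $m^2/(E+p)^2$ decreases, so $W$ decreases toward $1$; then $DW(p)$ decreases while $E+p$ and $\frac{\Gamma}{\Gamma-1}pW^2$ — the latter requires a tiny check — behave so that $-\Phi$ increases), and is the one place where the restriction $\Gamma\in(1,2]$ may quietly enter.
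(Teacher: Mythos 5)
Your overall route is the paper's route (analyze a scalar function of $p$ built from the pressure equation \eqref{eq:solveP}, get a positive root from boundary values plus monotonicity, then read off $v<1$ and $\rho>0$), but the step you lean on hardest fails: your $\Phi(p)=E+p-DW(p)-\frac{\Gamma}{\Gamma-1}pW(p)^2$ with $W(p)=\bigl(1-m^2/(E+p)^2\bigr)^{-1/2}$ is \emph{not} strictly decreasing on $(0,\infty)$ for all states in ${\mathcal G}_1$. Indeed, $\Phi'(p)=1-DW'(p)-\frac{\Gamma}{\Gamma-1}\bigl(W^2+2pWW'\bigr)$ with $W'(p)=-W^3m^2/(E+p)^3<0$, so both $-DW'$ and $-\frac{\Gamma}{\Gamma-1}2pWW'$ are \emph{positive} and can beat $-\frac{\Gamma}{\Gamma-1}W^2$ when $m$ is close to $E$. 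Concretely, take $\Gamma=2$, $E=1$, $m=0.999$, $D=0.99\sqrt{E^2-m^2}\approx 0.0443$ (so $D>0$, $q(\vec U)>0$): then $\Phi(0^+)\approx +0.01$, $\Phi(0.001)\approx -0.20$, $\Phi(0.01)\approx -0.21$, $\Phi(0.1)\approx -0.15$, and one checks $\Phi'(0.01)\approx +4.7>0$; so $\Phi$ drops, then increases over a sizable interval, and only later tends to $-\infty$. Your own remark that $DW(p)$ \emph{decreases} already points at the problem: that term pushes $-\Phi$ in the wrong direction, and the ``tiny check'' you defer does not go through. Your boundary analysis is correct and still gives \emph{existence} of a positive root by the intermediate value theorem, but not uniqueness as argued --- and uniqueness is part of what the lemma must deliver, since $p(\vec U)$, $v(\vec U)$, $\rho(\vec U)$, and hence the set ${\mathcal G}$ in \eqref{EQ-adm-set01}, are only well defined once the positive root of \eqref{eq:solveP} is unique (the paper explicitly appeals to this proof for that fact).

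The fix is the paper's choice of parametrization: divide the equation by $W^2$ and study $\widetilde\Phi(p):=\frac{m^2}{E+p}+D\sqrt{1-\frac{m^2}{(E+p)^2}}+\frac{p}{\Gamma-1}-E$, which has the same zeros as your $\Phi$ (since $\widetilde\Phi=-\Phi/W^2$) but satisfies $\widetilde\Phi'(p)=\frac{1}{\Gamma-1}-\frac{m^2}{(E+p)^2}\bigl(1-\frac{D}{\sqrt{(E+p)^2-m^2}}\bigr)\ge 1-\frac{m^2}{(E+p)^2}>0$ whenever $\Gamma\in(1,2]$ and $E>\sqrt{D^2+m^2}$; together with $\widetilde\Phi(0)<0$ and $\widetilde\Phi\to+\infty$ this gives the unique positive root, after which $v=m/(E+p)<m/E<1$ and $\rho=D\sqrt{1-v^2}>0$ exactly as you conclude. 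A secondary point: in the inclusion ${\mathcal G}\subseteq{\mathcal G}_1$, positivity of $E^2-(D^2+m^2)$ is not just ``routine algebra from $h>1$''; the paper reduces it to $\frac{1}{1-v^2}\bigl[\rho^2(1+e)^2-\rho^2-p^2v^2\bigr]$ and needs both $v<1$ and the $\Gamma$-law \eqref{gamma-law} with $\Gamma\le 2$ (yielding the lower bound $\rho^2 e\bigl(2+e\Gamma(2-\Gamma)\bigr)>0$), so the restriction $\Gamma\in(1,2]$ enters in that direction as well, not only in the monotonicity step.
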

\begin{proof}
{\tt (i):  Prove that $\vec U\in {\mathcal G}_1$ when $\vec U\in {\mathcal G}$.
}
When $\vec U = (D,\vec m,E)^T$ satisfy the constraints $\rho (\vec U) > 0$, $p(\vec U) > 0$, and $v(\vec U) < 1$, it is not difficult to
show 
\[
D = \frac{\rho }{{\sqrt {1 -   v^2(\vec U) } }} > 0, \quad E = \frac{{\rho h}}{{1 -  v^2(\vec U) }} - p > \rho h - p \overset{\eqref{eq:h}}{=} \rho (1 + e) > 0,
\]
and
\begin{align*}
 E^2  -& \left( {D^2  +  m^2 } \right) = \left( {\frac{{\rho h}}{{1 - v^2 }} - p} \right)^2  - \frac{{\rho ^2 }}{{1 - v^2 }} - \left( {\frac{{\rho hv}}{{1 - v^2 }}} \right)^2  \\
 &= \left( {\frac{{\rho h}}{{1 - v^2 }}} \right)^2  +p^2 -2p{\frac{{\rho h}}{{1 - v^2 }}}
  - \frac{{\rho ^2 }}{{1 - v^2 }} - \left( {\frac{{\rho hv}}{{1 - v^2 }}} \right)^2
 \\
  &= \frac{1}{{1 - v^2 }}\left[ {\left( {\rho h - p} \right)^2  - \rho ^2  - p^2 v^2 } \right]
  \overset{\eqref{eq:h}}{=}
  \frac{1}{{1 - v^2 }}\left[ {\rho ^2 \left( {1 + e} \right)^2  - \rho ^2  - p^2 v^2} \right] \\
    &\overset{v< 1}{>}
    \frac{1}{{1 - v^2 }}\left[ {\rho ^2 \left( {1 + e} \right)^2  - \rho ^2  - p^2 } \right]
  \overset{\eqref{gamma-law}}{=}
  \frac{{\rho^2 e}}{{1 - v^2 }}
     \big( 2+ e\Gamma (2 - \Gamma ) \big)  \overset{\Gamma \in (1,2]}{>} 0.
\end{align*}
Thus $q(\vec U)=E- \sqrt{D^2  +  m^2 }>0$ so that $\vec U\in {\mathcal G}_1$.

{\tt (ii): Prove that $\vec U\in {\mathcal G}$ when $\vec U\in {\mathcal G}_1$.
}
 Consider the function of $p$ defined by
\[
\Phi (p) := \frac{{ m^2 }}{{E + p}} + D\sqrt {1 - \frac{{ m^2 }}{{(E + p)^2 }}}  + \frac{p}{{\Gamma  - 1}} - E, \quad  p\in [0,+\infty),
\]
with $\vec U$  satisfying that $D>0$  and $E>\sqrt{D^2+ m^2}$.
Obviously, $ \Phi (p) \in C^1[0,+\infty)$, and
\[
\Phi '(p) = \frac{1}{{\Gamma  - 1}} - \frac{{ m^2 }}{{\left( {E + p} \right)^2 }}\left( {1 - \frac{D}{{\sqrt {(E + p)^2  -  m^2 } }}} \right) \ge 1 - \frac{{ m^2 }}{{\left( {E + p} \right)^2 }} > 0,\quad \forall p \in \left[ {0, + \infty } \right),
\]
when
$E>\sqrt{D^2+ m^2}$ and $\Gamma\in (1,2]$.
Thus  $\Phi (p)$ is a strictly monotonically increasing function of $p$ in the interval $[0,+\infty)$.
On the other hand, one has
\[
\Phi (0) = \frac{{ m^2 }}{E} + D\sqrt {1 - \frac{{ m^2 }}{{E^2 }}}  - E = \left( {D - \sqrt {E^2  -  m^2 } } \right)\sqrt {1 - \frac{{  m  ^2 }}{{E^2 }}}  < 0,
\]
and $\mathop {\lim }\limits_{p \to  + \infty } \Phi (p) =  + \infty$
because
\[
\mathop {\lim }\limits_{p \to  + \infty } \frac{{\Phi (p)}}{p} = \frac{1}{{\Gamma  - 1}}>0.
\]
Thanks to the {\em intermediate value theorem} and the monotonicity
of $\Phi (p)$, there
exists  a unique positive solution to the equation $\Phi (p)=0$,
which is equivalent to the equation \eqref{eq:solveP}.
Denote this positive solution by $p(\vec U)$.
Substituting  this 
into \eqref{eq:solveVRHO} may give
\[
  {v(\vec U)}  = \frac{{ m}}{{E + p(\vec U)}} <  \frac{{ m}}{E} < 1,\quad
\rho (\vec U) = D\sqrt {1 -  { v^2(\vec U)}   }  > 0,
\]
by using the conditions that $D>0$ and $E>\sqrt{D^2+ m^2}$.
Thus $\vec U \in {\mathcal G}$ and the proof is completed. \qed  %
\end{proof}

\begin{remark}
Comparing to ${\mathcal G}$ defined in  \eqref{EQ-adm-set01},
the constraints on conservative variables in the set ${\mathcal G}_1$
is much easier to be verified when the value of $\vec U$ is given.
\end{remark}

With the help of equivalence of the admissible state set ${\mathcal G} $ in Lemma \ref{lam:equDef}, we can further prove that it is a convex set. 

\begin{lemma}
\label{lam:convex}
The admissible set ${\mathcal G}_1$ is a convex set.
\end{lemma}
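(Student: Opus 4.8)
The plan is to show that $\mathcal{G}_1$ is the intersection of two convex sets, or more directly, that it is a sublevel/superlevel set cut out by a concave function together with a linear one. Writing $\vec U = (D, \vec m, E)^T$, the condition $D > 0$ is already linear, hence defines a convex (open half-)space. So the whole task reduces to showing that the set $\{\vec U : q(\vec U) = E - \sqrt{D^2 + m^2} > 0\}$ is convex. The natural route is to prove that $q(\vec U) = E - \sqrt{D^2 + m^2}$ is a \emph{concave} function of $\vec U \in \mathbb{R}^{d+2}$; then $\{q > 0\}$ is a superlevel set of a concave function and therefore convex, and intersecting with the half-space $\{D>0\}$ preserves convexity.

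To see the concavity of $q$, I would isolate the only nontrivial piece, namely $-\sqrt{D^2 + m^2} = -\sqrt{D^2 + m_1^2 + \cdots + m_d^2}$, and argue that $(D,\vec m) \mapsto \sqrt{D^2 + m^2}$ is convex. This is standard: the Euclidean norm $\|(D,\vec m)\|$ on $\mathbb{R}^{d+1}$ is convex by the triangle inequality together with positive homogeneity, i.e.\ $\|\lambda x + (1-\lambda) y\| \le \lambda\|x\| + (1-\lambda)\|y\|$ for $\lambda \in [0,1]$. Hence its negative is concave, and adding the linear term $E$ keeps it concave. Then for $\vec U^{(0)}, \vec U^{(1)} \in \mathcal{G}_1$ and $\lambda \in [0,1]$, concavity gives
\[
q\big(\lambda \vec U^{(1)} + (1-\lambda)\vec U^{(0)}\big) \ge \lambda\, q(\vec U^{(1)}) + (1-\lambda)\, q(\vec U^{(0)}) > 0,
\]
while positivity of $D$ is obviously preserved under convex combination, so $\lambda \vec U^{(1)} + (1-\lambda)\vec U^{(0)} \in \mathcal{G}_1$.

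An equivalent, slightly more self-contained way to phrase the same argument (avoiding any appeal to ``convexity of norms'' as a black box) is to verify the inequality $\sqrt{D^2 + m^2} \le \lambda\sqrt{(D^{(1)})^2 + (m^{(1)})^2} + (1-\lambda)\sqrt{(D^{(0)})^2 + (m^{(0)})^2}$ directly, where $(D,\vec m) = \lambda(D^{(1)},\vec m^{(1)}) + (1-\lambda)(D^{(0)},\vec m^{(0)})$, by the Cauchy--Schwarz / Minkowski inequality applied to the vectors $(D^{(j)}, \vec m^{(j)}) \in \mathbb{R}^{d+1}$. I do not expect any real obstacle here; the statement is essentially the observation that $\mathcal{G}_1$ is an (open) convex cone, the forward light-cone-type region $\{E > \|(D,\vec m)\|\} \cap \{D > 0\}$. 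The only thing to be a little careful about is to perform the argument at the level of the equivalent description $\mathcal{G}_1$ rather than $\mathcal{G}$ — which is exactly why Lemma~\ref{lam:equDef} was proved first — since the convexity is completely transparent in the $\mathcal{G}_1$ formulation but not at all obvious in terms of $\rho(\vec U)$, $p(\vec U)$, $v(\vec U)$, given that $p(\vec U)$ is not even concave in $\vec U$ (Fig.~\ref{fig:phi_lambda}).
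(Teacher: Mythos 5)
Your proposal is correct and follows essentially the same route as the paper: the paper verifies directly that $E^{(\lambda)}$ dominates $\sqrt{(D^{(\lambda)})^2 + (m^{(\lambda)})^2}$ via the Minkowski and triangle inequalities, which is exactly your observation that $(D,\vec m)\mapsto\sqrt{D^2+m^2}$ is convex (equivalently, $q(\vec U)$ is concave, as the paper itself notes in Remark~\ref{rem:q}), combined with linearity of the constraint $D>0$.
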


\begin{proof}
To show that the set ${\mathcal G}_1$ is convex,
 one has to prove that for  all $\lambda$  in the interval $[0,1]$,
and all  $\vec U^{(0)}  = (D^{(0)} ,\vec m^{(0)} ,E^{(0)} )^T$ and
 $\vec U^{(1)}  = (D^{(1)} ,\vec m^{(1)} ,E^{(1)} )^T$ in the
 set ${\mathcal G}_1$, the point
$\lambda \vec U^{(1)}  + (1 - \lambda )\vec U^{(0)}  = :(D^{(\lambda )} ,\vec m^{(\lambda )} ,E^{(\lambda )} )^T  \in {\mathcal G}_1$ also belongs to ${\mathcal G}_1$.

Because $\vec U^{(0)}, \vec U^{(1)}\in  {\mathcal G}_1$,
one has
\[
D^{(\lambda )}  = \lambda D^{(1)}  + (1 - \lambda )D^{(0)}  > 0,
\]
and
\begin{align*}
 E^{(\lambda )}  &= \lambda E^{(1)}  + (1 - \lambda )E^{(0)}
  \\
  &> \lambda \sqrt {\left( {D^{(1)} } \right)^2  +  \sum\limits_{i = 1}^d \left(m_i^{(1)} \right)^2 }
  + (1 - \lambda )\sqrt {\left( {D^{(0)} } \right)^2  + \sum\limits_{i = 1}^d
  	 {\left( {m_i^{(0)} } \right)^2 } }  \\
  &\ge \sqrt {\left[ {\lambda D^{(1)}  + (1 - \lambda )D^{(0)} } \right]^2  + \sum\limits_{i = 1}^d {\left[ {\lambda \left| {m_i^{(1)} } \right| + (1 - \lambda )\left| {m_i^{(0)} } \right|} \right]^2 } }  \\
  &\ge \sqrt {\left( {D^{(\lambda )} } \right)^2  + \sum\limits_{i = 1}^d {\left( {m_i^{(\lambda )} } \right)^2 } }.
\end{align*}
Here  the Minkowski inequality
 for both vectors $(\lambda D^{(1)}, \lambda m_1^{(1)}, \cdots,\lambda m_d^{(1)})$
and $((1-\lambda) D^{(0)}, (1-\lambda) m_1^{(0)},\cdots,(1-\lambda) m_d^{(0)})$
and the triangle inequality
$|\lambda m_i^{(1)}+(1-\lambda) m_i^{(0)}|\leq
\lambda| m_i^{(1)}|+(1-\lambda) | m_i^{(0)}| $
have been used respectively.
Thus $\lambda \vec U^{(1)}  + (1 - \lambda )\vec U^{(0)}  \in {\mathcal G}_1$ for all $\lambda  \in [0,1]$. The proof is completed.
\qed
\end{proof}

\begin{remark} \label{rem:q}
The proof of Lemma \ref{lam:convex} implies that the function $q(\vec U)$ defined in \eqref{eq:newG}
is  concave. Moreover,  the function $q(\vec U)$ is also
Lipschitz continuous with respect to  $\vec U$ and satisfies
\begin{align}\nonumber
\left| {q(\vec U^{(1)} ) - q(\vec U^{(0)} )} \right|
&
 \le \left| {E^{(1)}  - E^{(0)} } \right| +  \left| {\sqrt {\left( {D^{(1)} } \right)^2  +
 \left( m^{(1)}\right) ^2 }
 - \sqrt {\left( {D^{(0)} } \right)^2  + \left( m^{(0)} \right)^2 } } \right|  \\
 \nonumber
&
 \le \left| {E^{(1)}  - E^{(0)} } \right| +   {\sqrt {\left( {D^{(1)}  - D^{(0)} } \right)^2  + \sum_{i=1}^d \left({ m^{(1)}_i  -  m^{(0)}_i } \right) ^2 } }
\\ \nonumber
&
 \le \sqrt {2\left[ {\left( {D^{(1)}  - D^{(0)} } \right)^2  + \sum_{i=1}^d
 \left({ m^{(1)}_i  -  m^{(0)}_i } \right) ^2  + \left( {E^{(1)}  - E^{(0)} } \right)^2 } \right]}
 \\ \label{EQ:Lip}
&= \sqrt 2 \left\| {\vec U^{(1)}  - \vec U^{(0)} } \right\|,
\end{align}
 for any $\vec U^{(0)}  = (D^{(0)} ,\vec m^{(0)} ,E^{(0)} )^T  \in \mathbb{R}^{d+2}$
and  $\vec U^{(1)}  = (D^{(1)} ,\vec m^{(1)} ,E^{(1)} )^T  \in \mathbb{R}^{d+2}$,
where 
the inequality $a+b \le \sqrt{2(a^2+b^2)}$ has been used.
The concavity and Lipschitz continuity of $q(\vec U)$ will play a pivotal role in designing our physical-constraints-preserving schemes for
the RHD equations \eqref{eqn:coneqn3d}.
\end{remark}

By means of  the convexity of ${\mathcal G}$,
the following properties of ${\mathcal G}$ can further be verified.

\begin{lemma}
\label{lam:propertyG}
Assume  $\vec U \in {\mathcal G}_1$, then
\begin{enumerate}
  \item[(i)] $\lambda \vec U \in {\mathcal G}_1$, for all $\lambda  > 0$.
  \item[(ii)]  $\vec T \vec U \in {\mathcal G}_1$,
  where $\vec T=\mbox{diag}\{1,\vec T_{d\times d},1\}$ and
  $\vec T_{d\times d}$ is
  the $d\times d$ rotational matrix.
 \item[(iii)] $\vec U \pm {\alpha}^{-1}{{\vec F_i (\vec U)}} \in {\mathcal G}_1$
 for all real number $\alpha\geq \varrho _i$,  $i=1, \cdots, d$,
 where $\varrho _i$ is  the spectral radius of the
  Jacobian matrix  $\vec A_i(\vec U)$,  i.e.
\[
\varrho _i  := \frac{{\left| {v_i } \right|(1 - c_s^{ 2} ) + c_s W^{ - 1} \sqrt {1 - v_i^2  - (v^2  - v_i^2 )c_s^2 } }}{{1 - v^2 c_s^2 }}.
\]
\end{enumerate}
\end{lemma}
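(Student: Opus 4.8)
The plan is to work throughout with the two functions $D$ and $q(\vec U)=E-\sqrt{D^2+m^2}$ that define ${\mathcal G}_1$, treating the three items in order of increasing difficulty. Items (i) and (ii) are immediate. For (i): $q$ is positively homogeneous of degree one, $q(\lambda\vec U)=\lambda E-\sqrt{\lambda^2D^2+\lambda^2m^2}=\lambda\,q(\vec U)$, so for $\lambda>0$ one has $\lambda D>0$ and $q(\lambda\vec U)=\lambda\,q(\vec U)>0$, i.e. $\lambda\vec U\in{\mathcal G}_1$. For (ii): a rotation matrix is orthogonal, so $|\vec T_{d\times d}\vec m|=|\vec m|$, and since $\vec T\vec U=(D,\vec T_{d\times d}\vec m,E)^T$ both $D$ and $q$ are unchanged, giving $\vec T\vec U\in{\mathcal G}_1$.

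The substance is item (iii). First I would record the elementary bound $\varrho_i>|v_i|$: since $c_s^2\le 1$ the radicand in $\varrho_i$ obeys $1-v_i^2-(v^2-v_i^2)c_s^2\ge 1-v^2>0$, and squaring $\varrho_i>|v_i|$ (then using $1-v^2=W^{-2}$) reduces it to $(1-v_i^2)(1-v^2c_s^2)>0$, which holds because $v_i^2\le v^2<1$; hence $\alpha\ge\varrho_i>|v_i|$. Next, using $m_i=(E+p)v_i$ one has the identity $\vec F_i(\vec U)=v_i\vec U+p\,\vec n_i$ with $\vec n_i:=(0,\delta_{1,i},\dots,\delta_{d,i},v_i)^T$, so that
\[
\vec U^{\pm}:=\vec U\pm\alpha^{-1}\vec F_i(\vec U)=(1\pm\alpha^{-1}v_i)\,\vec U\pm\alpha^{-1}p\,\vec n_i .
\]
Its first component equals $D(1\pm\alpha^{-1}v_i)>0$ because $\alpha>|v_i|$, so the first constraint of ${\mathcal G}_1$ holds; since $D^{\pm}>0$, it remains to prove $q(\vec U^{\pm})>0$, which is equivalent to $E^{\pm}>0$ together with $(E^{\pm})^2-(D^{\pm})^2-|\vec m^{\pm}|^2>0$.

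Writing $Q:=E^2-D^2-m^2=q(\vec U)\bigl(E+\sqrt{D^2+m^2}\bigr)>0$ on ${\mathcal G}_1$, a direct expansion using $m_i=(E+p)v_i$ gives
\[
(E^{\pm})^2-(D^{\pm})^2-|\vec m^{\pm}|^2=\alpha^{-2}G^{\pm}(\alpha),\qquad
G^{\pm}(\alpha):=(Q-p^2)(\alpha\pm v_i)^2+p^2(\alpha^2-1),
\]
while $Q-p^2=W^2\bigl(\rho^2(1+e)^2-\rho^2-p^2\bigr)>0$ for $\Gamma\in(1,2]$ by the same computation as in the proof of Lemma \ref{lam:equDef}. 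Thus $G^{\pm}$ is a quadratic in $\alpha$ with positive leading coefficient $Q$, and $\frac{d}{d\alpha}G^{\pm}(\alpha)=2\bigl(Q\alpha\pm(Q-p^2)v_i\bigr)>0$ for all $\alpha\ge\varrho_i$ (since $Q\alpha\ge Q\varrho_i>Q|v_i|\ge(Q-p^2)|v_i|$), so $G^{\pm}$ is strictly increasing on $[\varrho_i,+\infty)$ and it suffices to verify $G^{\pm}(\varrho_i)>0$; because $\varrho_i>|v_i|$, the binding case among the two signs is $(Q-p^2)(\varrho_i-|v_i|)^2+p^2(\varrho_i^2-1)>0$. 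Granting this, $G^{\pm}(\alpha)>0$ for every $\alpha\ge\varrho_i$, hence $(E^{\pm})^2=(D^{\pm})^2+|\vec m^{\pm}|^2+\alpha^{-2}G^{\pm}(\alpha)>0$ on the whole range; since $E^{\pm}=E\pm\alpha^{-1}m_i$ is affine in $\alpha^{-1}\in[0,\varrho_i^{-1}]$, equals $E>0$ at $\alpha^{-1}=0$, and never vanishes on that interval, it stays positive, and therefore $q(\vec U^{\pm})>0$, i.e. $\vec U^{\pm}\in{\mathcal G}_1$.

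The main obstacle is thus the single inequality $(Q-p^2)(\varrho_i-|v_i|)^2+p^2(\varrho_i^2-1)>0$. I would substitute the explicit formula for $\varrho_i$, the closure relation $c_s^2=\Gamma p/(\rho h)$ and the $\Gamma$-law (so that $p=(\Gamma-1)\rho e$ and $h=1+\Gamma e$), and reduce everything to the primitive variables via $Q=W^2\bigl(\rho^2(1+e)^2-\rho^2-p^2v^2\bigr)$, $Q-p^2=W^2\bigl(\rho^2(1+e)^2-\rho^2-p^2\bigr)$, exploiting also the partial simplification of $W^{-1}\sqrt{1-v_i^2-(v^2-v_i^2)c_s^2}$ inside $\varrho_i$; after clearing the positive denominator $(1-v^2c_s^2)^2$ the inequality should collapse to a polynomial in $e,\rho,p,v^2,v_i^2$ that is manifestly positive for $\Gamma\in(1,2]$ and $v<1$. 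Carrying the cross terms $v^2-v_i^2$ through the multi-dimensional case is the only genuinely tedious point; as a consistency check, for $v_i=0$ the inequality reduces to $c_s^2>p^2/\bigl(\rho^2(1+e)^2-\rho^2\bigr)$, equivalently $\Gamma e(2-\Gamma)+\Gamma+1>0$, which is evident.
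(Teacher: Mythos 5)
Your preliminary reductions are sound, and in places cleaner than the paper's: the identity $\vec F_i(\vec U)=v_i\vec U+p\,\vec n_i$, the resulting formula $(E^{\pm})^2-(D^{\pm})^2-|\vec m^{\pm}|^2=\alpha^{-2}G^{\pm}(\alpha)$ with $G^{\pm}(\alpha)=(Q-p^2)(\alpha\pm v_i)^2+p^2(\alpha^2-1)$, the fact $Q-p^2=W^2\bigl(\rho^2(1+e)^2-\rho^2-p^2\bigr)>0$, the monotonicity of $G^{\pm}$ on $[\varrho_i,\infty)$, and the continuity argument for $E^{\pm}>0$ are all correct, and your computation treats every direction $i$ at once, so the paper's rotational-invariance step becomes unnecessary. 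The gap is that you never prove the single inequality on which item (iii) now rests, $G^{-}(\varrho_i)>0$, i.e. $(Q-p^2)(\varrho_i-|v_i|)^2+p^2(\varrho_i^2-1)>0$. You verify it only at $v_i=0$ and assert that in general, after substituting the explicit formula for $\varrho_i$, it ``should collapse to a manifestly positive polynomial.'' That is a plan, not an argument, and it understates the difficulty: $\varrho_i$ contains the radical $c_sW^{-1}\sqrt{1-v_i^2-(v^2-v_i^2)c_s^2}$, so both $(\varrho_i-|v_i|)^2$ and $1-\varrho_i^2$ carry terms linear in that root, the expression is not polynomial until further manipulation, and its positivity is not manifest --- it genuinely depends on the EOS relation $c_s^2=\Gamma p/(\rho h)$ and on $\Gamma\le 2$ (your own $v_i=0$ reduction $\Gamma+1+\Gamma e(2-\Gamma)>0$ can fail for $\Gamma>2$ at large $e$). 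This inequality is precisely the heart of the lemma, so as it stands the proposal is incomplete. A minor additional omission: your bound $\varrho_i>|v_i|$ uses $c_s^2\le 1$, which itself requires the computation $c_s^2=\Gamma p/\bigl(\rho+\tfrac{\Gamma}{\Gamma-1}p\bigr)<\Gamma-1\le 1$ as in \eqref{EQ-thz02}.

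The missing idea that closes the gap is the one the paper exploits: $\varrho_i$ is the largest root of the quadratic $(1-v^2c_s^2)\varrho^2-2|v_i|(1-c_s^2)\varrho+v_i^2(1-c_s^2)-c_s^2(1-v^2)=0$, which rearranges to the identity \eqref{EQ-thz04},
\begin{equation*}
c_s^2\bigl(1-\varrho_i^2\bigr)=W^2\bigl(1-c_s^2\bigr)\bigl(\varrho_i-|v_i|\bigr)^2 ,
\end{equation*}
and in particular yields $\varrho_i<1$. Substituting this into $G^{-}(\varrho_i)$ and dividing by $1-\varrho_i^2>0$, the required inequality becomes $\bigl(\rho^2(1+e)^2-\rho^2-p^2\bigr)c_s^2>p^2(1-c_s^2)$, i.e. $c_s^2\bigl(\rho^2(1+e)^2-\rho^2\bigr)>p^2$ --- exactly your $v_i=0$ consistency check, now valid for all $v_i$. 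With this identity inserted, your outline becomes a complete proof (and a somewhat more streamlined one than the paper's, which bounds the expression at $\alpha=\varrho_1$ and estimates $E^{\pm}$ separately); without it, the proposal stops short at the decisive step.
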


\begin{proof}
The verification of   first two properties are omitted here because
they can be  directly and easily verified via
the definition  of ${\mathcal G}_1$.

The following  will prove the third conclusion ({\em iii})
that   if $\vec  U\in {\mathcal G}_1$, then
$\left( {D^ \pm,\vec m^ \pm, E^ \pm  } \right)^T
=\vec  U^\pm:=\vec U \pm {\alpha }^{-1}{{\vec F_1 (\vec U)}} \in {\mathcal G}_1$.
It is nontrivial and requires several techniques thanks to
	 no explicit formulas of the flux $\vec F_1(\vec U)$
in terms of  $\vec U$.

For all $\vec U \in {\mathcal G}_1$ and the perfact gas \eqref{gamma-law} with $\Gamma\in (1,2]$,
one has 
\begin{align}\label{EQ-thz02}
 c_s^2  =  \frac{{\Gamma p}}{{\rho h}} &= \frac{{\Gamma p}}
 {{\rho  + \frac{\Gamma }{{\Gamma  - 1}}p}} < \Gamma  - 1 \le 1,
 \end{align}
 so that the inequalities
 $$
W^{ - 1} \sqrt {1 - v_i^2  - (v^2  - v_i^2 )c_s^2 }\geq W^{ - 1} \sqrt {1 - v^2}=1-v^2,
 $$
 and
 $$
 \varrho_1\geq \frac
    {{|v_1|} (1 - c_s^{ 2} ) + c_s (1 - v^2 )}{  1 - v^2 c_s^2  },
 $$
 hold. They  imply
 \begin{align*}
 1 - \frac{{\left| {v_1 } \right|}}{ \varrho _1 }
 &\ge 1 - \frac{ \left| v_1  \right| (1 - v^2 c_s^2 ) }
   {{|v_1|} (1 - c_s^{ 2} ) + c_s (1 - v^2 )}
  = \frac{{W^{ - 2} c_s \left( {1 - \left| {v_1 } \right|c_s } \right)}}{{\left| {v_1 } \right|\left( {1 - c_s^2 } \right) + c_s (1 - v^2 )}} \\
 & \ge \frac{{W^{ - 2} c_s \left( {1 - \left| {v_1 } \right|c_s } \right)}}{{\left| {v_1 } \right|\left( {1 - c_s^2 } \right) + c_s (1 - v_1^2 )}} = \frac{{W^{ - 2} c_s }}{{\left| {v_1 } \right| + c_s }} > \frac{{W^{ - 2} c_s }}{{1 + c_s }},
\end{align*}
so that  one has
\begin{align}
{1 \pm \frac{{v_1 }}{\alpha }}  \ge  {1 - \frac{{\left| {v_1 } \right|}}{\alpha }}
 \ge   {1 - \frac{{\left| {v_1 } \right|}}{{\varrho _1 }}}
 > \frac{{W^{ - 2} c_s }}{{1 + c_s }}> 0.\label{EQ-thz01}
\end{align}
Thus one gets
\begin{align*}
D^ \pm  &= D\left( {1 \pm \frac{{v_1 }}{\alpha }} \right) > 0,
\\
 E^ \pm  & = E \pm \frac{{m_1 }}{\alpha }
 =(\rho hW^2-p)\pm \rho h W^2\frac{{v_1}}{\alpha }
  \overset{\eqref{EQ-thz01}}{>} \frac{{\rho hc_s }}{{1 + c_s }} - p
  \overset{\eqref{EQ-thz02}}{=} p\left( {\frac{\Gamma }{{c_s (1 + c_s )}} - 1} \right)
 \\ & \overset{\eqref{EQ-thz02}}{>} p\left( {\frac{\Gamma }{{\Gamma  - 1 + \sqrt {\Gamma  - 1} }} - 1} \right) = p\left( {\frac{{1 - \sqrt {\Gamma  - 1} }}{{\Gamma  - 1 + \sqrt {\Gamma  - 1} }}} \right)
  \overset{\Gamma \in (1,2]}{\ge}
   0,
\end{align*}
and
\begin{align}\nonumber
& \left( {D^ \pm  } \right)^2  + (\vec m^ \pm  )^2  - \left( {E^ \pm  } \right)^2
\\ \nonumber
&  = \left( {1 \pm \frac{{v_1 }}{\alpha }} \right)^2 \left( {D^2  +  m^2  - E^2 }
 \right) \pm \frac{{2p}}{\alpha }\left( {m_1  - Ev_1 } \right)\left( {1 \pm \frac{{v_1 }}{\alpha }} \right) + \frac{{p^2 }}{{\alpha ^2 }}(1 - v_1^2 )
 \\ \nonumber
&  = \left( {1 \pm \frac{{v_1 }}{\alpha }} \right)^2 W^2 \left[ {\rho ^2  + p^2 v^2  - \left( {\rho h - p} \right)^2 } \right] + p^2 \left( {1 \pm \frac{{v_1 }}{\alpha }} \right)^2  - p^2  + \frac{{p^2 }}{{\alpha ^2 }}
\\ \nonumber
&  = \left( {1 \pm \frac{{v_1 }}{\alpha }} \right)^2 W^2 \left[ {\rho ^2  + p^2  - \left( {\rho  + \frac{p}{{\Gamma  - 1}}} \right)^2 } \right] + p^2 \left( {\frac{1}{{\alpha ^2 }} - 1} \right) \\
&
 \le
 \left( {1 - \frac{{\left| {v_1 } \right|}}{{\varrho _1 }}} \right)^2 W^2 \left[ {\rho ^2  + p^2  - \left( {\rho  + \frac{p}{{\Gamma  - 1}}} \right)^2 } \right] + p^2 \left( {\frac{1}{{\varrho _1^2 }} - 1} \right),
\label{EQ-thz03}
\end{align}
here  \eqref{EQ-thz01} and $\rho ^2  + p^2  - \left( {\rho  + \frac{p}{{\Gamma  - 1}}} \right)^2 \le 0$ have been  used.
Note that $\varrho_1$ is a positive solution to the following quadratic equation
\[
(1 - v^2 c_s^2 )\varrho _1^2  - 2\left| {v_1 } \right|(1 - c_s^{ 2} )\varrho _1
+ { v_1^2(1-c_s^2) - c_s^2 (1-v^2)}  = 0,
\]
which is equivalent to
\begin{align}\label{EQ-thz04}
\left( {1 - \varrho _1^2 } \right)c_s^2  = W^2 \left( {\varrho _1  - \left| {v_1 } \right|} \right)^2 (1 - c_s^2 ).
\end{align}
It implies that $\varrho _1 < 1$.
With the help of \eqref{EQ-thz03} and \eqref{EQ-thz04},
one  has \begin{align*}
&
 \left( {D^ \pm  } \right)^2  + (\vec m^ \pm)^2  - \left( {E^ \pm  } \right)^2  \\
&  \le
 \left( {\frac{1}{{\varrho _1^2 }} - 1} \right)\frac{{c_s^2 }}{{1 - c_s^2 }}\left[ {p^2  - \frac{{2\rho p}}{{\Gamma  - 1}} - \frac{{p^2 }}{{\left( {\Gamma  - 1} \right)^2 }}} \right] + p^2 \left( {\frac{1}{{\varrho _1^2 }} - 1} \right) \\
&  = \left( {\frac{1}{{\varrho _1^2 }} - 1} \right)\frac{{p^2 }}{{(1 - c_s^2 )\left( {\Gamma  - 1} \right)}}\left[ {\Gamma  - 1 - c_s^2 \left( {\frac{1}{{\Gamma  - 1}} + \frac{{2\rho }}{p}} \right)} \right] \\
&  \overset{\Gamma\in(1,2]}{\le} \left( {\frac{1}{{\varrho _1^2 }} - 1} \right)\frac{{p^2 }}{{(1 - c_s^2 )\left( {\Gamma  - 1} \right)}}\left[ {1 - c_s^2 \left( {\frac{1}{{\Gamma  - 1}} + \frac{{2\rho }}{p}} \right)} \right] \\
&  = \left( {\frac{1}{{\varrho _1^2 }} - 1} \right)
\frac{{p^2 }}{{(1 - c_s^2 )\left( {\Gamma  - 1} \right)}}\cdot \frac{1 - 2\Gamma}{h}
\overset{\Gamma\in(1,2]}{<} 0.
\end{align*}
Thus %
$\vec U \pm {\alpha }^{-1}{{\vec F_1 (\vec U)}}
\in {\mathcal G}_1 $.
Combining the above deduction and the property ({\em ii}) may verify
$\vec U \pm {\alpha }^{-1}{{\vec F_i (\vec U)}}
\in {\mathcal G}_1$ for $i=2,\cdots, d$. For example,
  in the case of $d=3$, $T$ may be taken as
$$
\vec T_{\theta ,\phi } : = \left[ {\begin{array}{*{20}c}
   1 & 0 & 0 & 0 & 0  \\
   0 & { \cos \theta \sin \phi} & { \sin \theta\sin\phi  } & {\cos \phi } & 0  \\
   0 & { - \sin \theta } & {\cos \theta } & 0 & 0  \\
   0 & { - \cos \theta \cos \phi } & { -  \sin \theta \cos \phi} & {\sin \phi } & 0  \\
   0 & 0 & 0 & 0 & 1
\end{array}} \right],
$$
 for all $\theta\in[0,2\pi) ,\phi  \in [0,\pi]$.
Using the property ({\em ii}) that
$\vec T_{\theta ,\phi } \vec U \in {\mathcal G}_1$ and the above proof
of the property ({\em iii}) gives
\[
\vec T_{\theta ,\phi } \vec U \pm {\alpha }^{-1}{{\vec F_1 (\vec T_{\theta ,\phi } \vec U)}}
\in {\mathcal G}_1 ,
\]
where $\alpha$ is not less than $ \varrho _1$. 
Since $\vec T_{\theta ,\phi }^{ - 1}$  is also a rotational matrix, it holds that
\[
\vec T_{\theta ,\phi }^{ - 1} \left( {\vec T_{\theta ,\phi } \vec U \pm
{\alpha }^{-1}{{\vec F_1 (\vec T_{\theta ,\phi } \vec U)}}} \right)
= \vec U \pm {\alpha }^{-1}{{\vec T_{\theta ,\phi }^{ - 1} \vec F_1 (\vec T_{\theta ,\phi } \vec U)}}
 \in {\mathcal G}_1.
\]
With the help of the rotational invariance of the system \eqref{eqn:coneqn3d}, see \cite{ZhaoHeTang2014},
one has $\vec U \pm {\alpha }^{-1}{{\vec F_i (\vec U)}} \in {\mathcal G}_1 $
for $\forall \alpha  \ge \varrho _i,~i=2,3$.   The proof is completed. \qed
\end{proof}

 It is worth emphasizing that Lemma \ref{lam:propertyG} also
 plays a pivotal role in seeking the physical-constraints-preserving schemes.

\section{Numerical schemes}
\label{sec:scheme}

This section gives the physical-constraints-preserving
finite difference WENO schemes  for the RHD
system \eqref{eqn:coneqn3d} with the EOS \eqref{gamma-law}. 

\subsection{One-dimensional case}
\label{sec:1Dscheme}

This subsection first discusses numerical discretization of  the 1D RHD equations in the laboratory frame
\begin{equation}
	\label{eq:1D}
	\displaystyle\frac{\partial \vec{U}}{\partial t} +
	\frac{\partial \vec{F}_1(\vec{U})}{\partial x}=0,
\end{equation}
where
\begin{equation*}
	\vec U = (D,m_1,E)^T,\ \ \vec F_1 = (D v_1,m_1 v_1+p,m_1)^T.
\end{equation*}

\subsubsection{Spatial discretization}
\label{sec:reviweWENO}
Let us divide the space into cells of size
$\Delta x$, and denote the $j$th cell by $I_j = \left(x_{j-\frac{1}{2}},x_{j+\frac{1}{2}} \right)$,
where $x_{j+\frac{1}{2}} = \frac{1}{2}(x_j+x_{j+1})$ and $x_j=j\Delta x$,
 $j\in \mathbb Z$.

A  semi-discrete, $(2r-1)$th-order accurate,  conservative finite difference scheme
of the 1D RHD equations \eqref{eq:1D}
may be written as
\begin{equation}\label{eq:semi-ds}
\frac{{d \vec  U_j (t)}}{{dt}} =  - \frac{1}{{\Delta x}}\left( {\widehat {\vec F}_{j + \frac{1}{2}}
	- \widehat {\vec F}_{j - \frac{1}{2}} } \right)=:{\cal L}(\vec U(t);j),
\end{equation}
where $\vec U_j (t)\approx \vec U(x_j,t)$ and the numerical flux $\widehat {\vec F}_{j + \frac{1}{2}}$
is consistent with the flux vector $\vec F_1(\vec U)$ and satisfies
\[
 \frac{1}{{\Delta x}}\Big( {\widehat {\vec F}_{j + \frac{1}{2}}  - \widehat {\vec F}_{j - \frac{1}{2}} } \Big)
 =\partial _x \vec F_1 (\vec U)|_{x_j} +{\cal O}(\Delta x^{2r-1}).
\]

\begin{Def}
The scheme \eqref{eq:semi-ds} is  physical-constraints-preserving if
$\vec U_j (t) + \Delta t  {\cal L}\left( {\vec U (t)};j \right) \in {\mathcal G}$ for all
$j\in \mathbb Z$,
under a CFL-type condition for $\Delta t$ when $\vec U_j (t)\in {\mathcal G}$
for all $j$.
\end{Def}

The third property of Lemma \ref{lam:propertyG} has implied
that there at least exists a physical-constraints-preserving scheme for the 1D RHD system \eqref{eq:1D}. An example is the first-order (i.e. $r=1$) accurate local Lax-Friedrichs scheme
with the numerical flux
	$$\widehat {\vec F}_{j + \frac{1}{2}}=\widehat {\vec F}_{j + \frac{1}{2}}^{\rm LLF}:=
	\frac{1}{2}\big( {\vec F_1 (\vec U_j ) + \vec F_1 (\vec U_{j + 1} ) -  \alpha_{j + \frac{1}{2}} \left( {\vec U_{j + 1}  - \vec U_j } \right)} \big),
	$$
	where the viscosity coefficient $ \alpha _{j + \frac{1}{2}}:= \max \left\{ {\varrho_1 \left( {\vec U_{j-r+1} } \right),\cdots,
		\varrho_1 \left( {\vec U_{j + r} } \right)} \right\}$.
In practical computations, the above $\alpha _{j + \frac{1}{2}}$
may be replaced with
$$\alpha _{j + \frac{1}{2}} =\vartheta \max \left\{ {\varrho _{j + \frac{1}{2}}^{ROE} ,\varrho_1 \left( {\vec U_{j - r + 1} } \right), \cdots ,\varrho_1 \left( {\vec U_{j + r} } \right)} \right\}.
$$
see \cite{balsara2000},
where the parameter  $\vartheta$ is typically in the range of 1.1 to 1.3 and controls the amount of dissipation in the numerical schemes  while $\varrho _{j + \frac{1}{2}}^{ROE}$
is the spectral radius of the Roe matrix $\widehat{\vec A}_1(\vec U_j,\vec U_{j+1})$ approximating the Jacobian matrix
$ \vec A_1(\vec U)$, see \cite{EulderinkMel:1995}.

The physical-constraints-preserving property of
the above local Lax-Friedrichs scheme is shown below.

\begin{lemma}
	\label{lam:LLF}
	If $\vec U_j \in {\mathcal G}$ for all $j$,	
 then under the CFL-type condition
 	\begin{equation}\label{eq:CFL-LLF}
 		\Delta t \le \frac{{ \Delta x}}{{ 2 \mathop {\max }\limits_{j} \alpha_{j+\frac{1}{2}}}},
 	\end{equation}
 	one has
	\begin{equation*}
		\vec U_j^{\pm,\rm LLF}  := \vec U_j  \mp \frac{{2\Delta t}}{{\Delta x}}   \widehat {\vec F}_{j \pm \frac{1}{2}}^{\rm LLF} \in {\mathcal G},
	\end{equation*}
	and $\vec U_j (t) + \Delta t  {\cal L}\left( {\vec U (t)};j \right)
	=\frac12\left(\vec U_j^{+,\rm LLF}+\vec U_j^{-,\rm LLF}\right)	
	 \in {\mathcal G}$
	for all $j$.
\end{lemma}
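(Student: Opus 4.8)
The plan is to write each of $\vec U_j^{\pm,\rm LLF}$ as a convex combination of points already known to lie in ${\mathcal G}_1$, and then conclude by invoking, in order, the equivalence ${\mathcal G}={\mathcal G}_1$ (Lemma \ref{lam:equDef}), the convexity of ${\mathcal G}_1$ (Lemma \ref{lam:convex}), and property (\emph{iii}) of Lemma \ref{lam:propertyG}. Introduce $\mu:=\Delta t/\Delta x$ and $a_{j+\frac12}:=\mu\,\alpha_{j+\frac12}$; the CFL condition \eqref{eq:CFL-LLF} is precisely the assertion that $a_{j\pm\frac12}\le\tfrac12$ for every $j$, since each $\alpha_{j\pm\frac12}$ is bounded by $\max_j\alpha_{j+\frac12}$.

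First I would substitute the definition of $\widehat{\vec F}_{j+\frac12}^{\rm LLF}$ into $\vec U_j^{+,\rm LLF}=\vec U_j-\frac{2\Delta t}{\Delta x}\widehat{\vec F}_{j+\frac12}^{\rm LLF}$ and rearrange, using $\mu=a_{j+\frac12}/\alpha_{j+\frac12}$, to the form
\[
\vec U_j^{+,\rm LLF}=\big(1-2a_{j+\frac12}\big)\vec U_j+a_{j+\frac12}\Big(\vec U_j-\tfrac{1}{\alpha_{j+\frac12}}\vec F_1(\vec U_j)\Big)+a_{j+\frac12}\Big(\vec U_{j+1}-\tfrac{1}{\alpha_{j+\frac12}}\vec F_1(\vec U_{j+1})\Big).
\]
Because $\alpha_{j+\frac12}\ge\varrho_1(\vec U_j)$ and $\alpha_{j+\frac12}\ge\varrho_1(\vec U_{j+1})$ by the definition of $\alpha_{j+\frac12}$, Lemma \ref{lam:propertyG}(\emph{iii}) gives $\vec U_j-\alpha_{j+\frac12}^{-1}\vec F_1(\vec U_j)\in{\mathcal G}_1$ and $\vec U_{j+1}-\alpha_{j+\frac12}^{-1}\vec F_1(\vec U_{j+1})\in{\mathcal G}_1$, while $\vec U_j,\vec U_{j+1}\in{\mathcal G}={\mathcal G}_1$ by hypothesis. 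Since the weights $1-2a_{j+\frac12},\,a_{j+\frac12},\,a_{j+\frac12}$ are nonnegative and sum to one, convexity of ${\mathcal G}_1$ gives $\vec U_j^{+,\rm LLF}\in{\mathcal G}_1={\mathcal G}$. The identical computation applied to $\vec U_j^{-,\rm LLF}=\vec U_j+\frac{2\Delta t}{\Delta x}\widehat{\vec F}_{j-\frac12}^{\rm LLF}$, now with weight $a_{j-\frac12}=\mu\,\alpha_{j-\frac12}$ and with the flux terms appearing with a plus sign, exhibits it as a convex combination of $\vec U_j$, $\vec U_j+\alpha_{j-\frac12}^{-1}\vec F_1(\vec U_j)$ and $\vec U_{j-1}+\alpha_{j-\frac12}^{-1}\vec F_1(\vec U_{j-1})$, all in ${\mathcal G}_1$ for the same reasons (using the $+$ version of Lemma \ref{lam:propertyG}(\emph{iii})), hence $\vec U_j^{-,\rm LLF}\in{\mathcal G}$.

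Finally, telescoping the LLF fluxes shows $\vec U_j(t)+\Delta t\,{\cal L}(\vec U(t);j)=\vec U_j-\frac{\Delta t}{\Delta x}\big(\widehat{\vec F}_{j+\frac12}^{\rm LLF}-\widehat{\vec F}_{j-\frac12}^{\rm LLF}\big)=\frac12\big(\vec U_j^{+,\rm LLF}+\vec U_j^{-,\rm LLF}\big)$, which lies in ${\mathcal G}$ by one more use of the convexity of ${\mathcal G}_1$.

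I do not expect a genuine obstacle: the substance of the lemma is entirely carried by the three preceding lemmas, and what remains is bookkeeping. The only points needing care are (a) performing the algebra of the splitting so that the coefficient $1-2a_{j\pm\frac12}$ — rather than $1-a_{j\pm\frac12}$ — appears, since it is exactly the nonnegativity of this coefficient that forces the factor $2$ in \eqref{eq:CFL-LLF}; and (b) remembering to apply Lemma \ref{lam:propertyG}(\emph{iii}) with $\alpha=\alpha_{j\pm\frac12}$, which dominates $\varrho_1$ at both neighbouring nodes, rather than with the nodal spectral radii themselves.
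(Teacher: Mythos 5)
Your proposal is correct and follows essentially the same route as the paper: rewrite $\vec U_j^{\pm,\rm LLF}$ as the convex combination with weights $1-2\alpha_{j\pm\frac12}\Delta t/\Delta x$, $\alpha_{j\pm\frac12}\Delta t/\Delta x$, $\alpha_{j\pm\frac12}\Delta t/\Delta x$ of $\vec U_j$ and the two states $\vec U_j\mp\alpha_{j\pm\frac12}^{-1}\vec F_1(\vec U_j)$, $\vec U_{j\pm1}\mp\alpha_{j\pm\frac12}^{-1}\vec F_1(\vec U_{j\pm1})$, then invoke Lemma \ref{lam:propertyG}(iii) and the convexity of ${\mathcal G}_1$ (Lemma \ref{lam:convex}), and finish with the averaging identity for $\vec U_j+\Delta t\,{\cal L}(\vec U;j)$. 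This is exactly the paper's argument, with only cosmetic differences in bookkeeping.
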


\begin{proof}
	Note that $\vec U_j^{ \pm,\rm LLF }$ can be rewritten as
	\begin{align*}
		\vec U_j^{ \pm ,\rm LLF}  &= \vec U_j  \mp \frac{{\Delta t}}{{\Delta x}}\big( {\vec F_1 (\vec U_j ) + \vec F_1 (\vec U_{j \pm 1} ) \mp \alpha _{j \pm \frac{1}{2}} \left( {\vec U_{j \pm 1}  -\vec  U_j } \right)} \big) \\
		&= \left( {1 - 2\alpha _{j \pm \frac{1}{2}} \frac{{\Delta t}}{{\Delta x}}} \right) \vec U_j
		+ \alpha _{j \pm \frac{1}{2}} \frac{{\Delta t}}{{\Delta x}}
		\left[
		\left( {\vec U_j  \mp \frac{{\vec F_1 (\vec U_j )}}{{\alpha _{j \pm \frac{1}{2}} }}} \right)
		+ 
		\left( {\vec U_{j \pm 1}  \mp \frac{{\vec F_1 (\vec U_{j \pm 1} )}}{{\alpha _{j \pm \frac{1}{2}} }}} \right) \right],
	\end{align*}
	which is a  convex combination under the CFL-type condition \eqref{eq:CFL-LLF}. Utilizing the property (iii) of Lemma \ref{lam:propertyG} implying that
	$$\vec U_j  \mp \frac{{\vec F_1 (\vec U_j )}}{{\alpha _{j \pm \frac{1}{2}} }}, ~  \vec U_{j \pm 1}  \mp \frac{{\vec F_1 (\vec U_{j \pm 1} )}}{{\alpha _{j \pm \frac{1}{2}} }} \in {\mathcal G},$$
	and the convexity of the set $\mathcal G$ in Lemma \ref{lam:convex}
	may complete the proof. \qed
\end{proof}

The remaining task is to develop higher-order (i.e. $r>1$) accurate
 physical-constraints-preserving finite difference scheme
 for the 1D RHD equations \eqref{eq:1D}. To finish such task,
the idea of the positivity-preserving finite difference WENO schemes for compressible Euler equations in \cite{zhang2012} and the flux-limiter
in \cite{Hu2013} is borrowed here. For the sake of convenience, the
independent variable $t$ will be temporarily omitted.


Given point values $\{   \vec U_j  \}$,  for each $j$, calculate
$$\vec {\overline H}^\pm_k:=
\frac{1}{2}\left( {\vec U_k \pm   {{\alpha _{j + \frac{1}{2}}^{-1} }}
{{\vec F_1(\vec U_k)}} } \right), ~~ j-r+1\leq k\leq  j+r,
$$
which may be considered as the point values
of both local Lax-Friedrichs type splitting functions
\[
\frac{1}{2}\left( {\vec U(x) \pm {{\alpha _{j + \frac{1}{2}}^{-1} }} {{\vec F_1(\vec U(x))}}} \right).
\]

If define the  functions $\vec H_{j + \frac{1}{2}}^\pm(x)$ by
$$
\frac{1}{2}\left( {\vec U(x) \pm {{\alpha _{j + \frac{1}{2}}^{-1} }} {{\vec F_1(\vec U(x))}}} \right)
=\frac{1}{\Delta x}\int_{x-\Delta x/2}^{x+\Delta x/2}
\vec H_{j + \frac{1}{2}}^\pm(s)~ds,$$
then $\vec {\overline H}^\pm_k$ become
 the cell average values  of $\vec H_{j + \frac{1}{2}}^\pm(x)$
 over the cell $I_k$ because
\[
\overline{\vec  H}^\pm_k\equiv \frac{1}{{\Delta x}}\int_{x_k - \frac{{\Delta x}}{2}}^{x_k + \frac{{\Delta x}}{2}} {\vec H_{j+\frac{1}{2}}^{\pm}  (\xi )d\xi }, \quad j - r + 1 \le k \le j + r.
\]
Based on these cell-average values,
using the WENO reconstruction \cite{jiang1996,ShuSIReV2009} may
get high-order accurate left- and right-limited approximations of $\vec H_{j + \frac{1}{2}}^\pm(x)$
at the cell boundary $x_{j+\frac12}$, denoted by
$\vec H_{j + \frac{1}{2},L}^{+,\rm WENO}$ and
$\vec H_{j + \frac{1}{2},R}^{-,\rm WENO}$ respectively.
After then, the numerical flux of a $(2r-1)$th-order accurate  finite difference WENO scheme
of the 1D RHD equations \eqref{eq:1D} is
\begin{equation}\label{eq:WENOflux}
\widehat {\vec F}_{j + \frac{1}{2}}=
\widehat {\vec F}_{j + \frac{1}{2}}^{\rm WENO} : = \alpha _{j + \frac{1}{2}} \left(\vec H_{j + \frac{1}{2},L}^{+,\rm WENO}
 -  \vec H_{j + \frac{1}{2},R}^{-,\rm WENO} \right).
\end{equation}
Practically, for the system of conservation laws, a better and stable way to derive
 those left- and right-limited approximations is to
impose the WENO reconstruction on the characteristic variables
by means of their cell-average values
\begin{align*}
&
\overline {\vec W}^+_k  := \widetilde{\vec R}_{j + \frac{1}{2}} ^{-1}  \vec {\overline H}^+_k ,\quad j - r + 1 \le k \le j + r - 1,\\[2mm]
&
\overline {\vec W} ^ - _k : =\widetilde{\vec R}_{j + \frac{1}{2}} ^{-1} \vec {\overline H}^-_k ,\quad j - r + 2 \le k \le j + r,
\end{align*}
where $\widetilde{\vec R}_{j + \frac{1}{2} }$ is the right eigenvector matrix of the
Roe matrix $\widetilde{\vec A}_1(\vec U_j,\vec U_{j+1}) $.
After  having the  left- (resp. right-) limited WENO values of
the characteristic variables $\vec W^\pm$ at the cell boundary $x_{j+\frac{1}{2}}$,
denoted by $ \vec W_{j + \frac{1}{2},L}^ {+,\rm WENO}$ (resp. $ \vec W_{j + \frac{1}{2},R}^ {-,\rm WENO}$), then calculate
\[
\vec H_{j + \frac{1}{2},L}^{+,\rm WENO}  = \widetilde{\vec R}_{j + \frac{1}{2}}
 \vec W_{j + \frac{1}{2},L}^ {+,\rm WENO} ,\quad
\vec H_{j + \frac{1}{2},R}^{-,\rm WENO}  = \widetilde{\vec R}_{j + \frac{1}{2}}    \vec W_{j + \frac{1}{2},R}^ {-,\rm WENO}.
\]

Generally,  the high-order accurate finite
		 difference WENO schemes
		 \eqref{eq:semi-ds}
		 with the numerical flux
		  $\widehat {\vec F}_{j + \frac{1}{2}}=\widehat {\vec F}_{j + \frac{1}{2}}^{\rm WENO}$ given in \eqref{eq:WENOflux}
           is not physical-constraints-preserving, that is to say, it is possible to meet
           $\vec U_j (t) + \Delta t  {\cal L}\left( {\vec U (t)};j \right)
           		=\frac12\left(\vec U_j^{+,\rm WENO}+\vec U_j^{-,\rm WENO}\right)	
           		 \notin {\mathcal G}$, where
           		 $\vec U_j^{\pm,\rm WENO}  := \vec U_j  \mp \frac{{2\Delta t}}{{\Delta x}}   \widehat {\vec F}_{j \pm \frac{1}{2}}^{\rm WENO}$.
Thus for some demanding extreme
  problems, such as their solutions involving low density or pressure, or very large
  velocity or the ultra-relativistic flow, 
  these high-order schemes always easily break down after some time steps
  due to the nonphysical numerical solutions ($\vec U_j \notin {\mathcal G}$).
To cure such difficulties,
the positivity-preserving flux limiter \cite{Hu2013} for non-relativistic Euler equations
may be borrowed  and extended to our RHD case.
Because of the definition of ${\mathcal G}_1$ in Lemma \ref{lam:equDef}
and the properties of $q(\vec U)$ shown in Remark \ref{rem:q},
the resulting physical-constraints-preserving flux limiter may be formed into two steps as follows
in order to preserve the positivity of $D(\vec U)$ and $q(\vec U)$.
The flux limiter such as the parametrized flux limiter 
\cite{Xu_MC2013,Liang2014,XiongQiuXu2014}
can also be  extended to our RHD case in a similar way.

Before that,
two sufficiently small positive numbers $\varepsilon _D$ and $\varepsilon _q$
are first introduced (taken as $10^{-13}$ in numerical computations)
such that $ D_j^{ \pm,\rm LLF } \geq  \varepsilon _D>0$
and $q(\vec U_j^{ \pm,\rm LLF }) \geq \varepsilon _q>0$ for all $j$.
It is true because
 Lemma \ref{lam:LLF} tells us that the mass-density $D_j^{ \pm,\rm LLF } >0$
and $q(\vec U_j^{ \pm,\rm LLF }) >0$ for all $j$, where $D_j^{ \pm,\rm LLF }$ denotes the first component of $\vec U_j^{ \pm,\rm LLF }$.

\noindent
{\tt Step I:  Enforce the positivity of  $D(\vec U)$}.
For each $j$, correct the numerical flux $\widehat {\vec F}_{j + \frac{1}{2}}^{\rm WENO}$ as 
  \begin{equation}
    \label{eq:lmiterD}
    \left\{ \widehat {\vec F}_{j + \frac{1}{2}}^{D} \right\}_{\ell}
    :=
    \begin{cases}
      ( 1 - \theta_{D,j+\frac{1}{2}} )\left\{ \widehat {\vec F}_{j + \frac{1}{2}}^{\rm LLF} \right\}_{\ell}
      + \theta_{D,j+\frac{1}{2}}  \left\{  \widehat {\vec F}_{j + \frac{1}{2}}^{\rm WENO} \right\}_{\ell},\ \ & \ell = 1,\\[3mm]
      \left\{  \widehat {\vec F}_{j + \frac{1}{2}}^{\rm WENO} \right\}_{\ell},\ \ & \ell > 1,
    \end{cases}
  \end{equation}
where $\left\{  \widehat {\vec F}_{j + \frac{1}{2}} \right\}_{\ell}$
denotes the $\ell$th component of
$ \widehat {\vec F}_{j + \frac{1}{2}} $ and
$\theta_{D,j+1/2}=\min \{
\theta_{D,j+1/2}^+,  \theta_{D,j+1/2}^-  \}$
with
\begin{align*}
\theta _{D,j + \frac{1}{2}}^ \pm
 = \begin{cases}
 ( D_{j+\frac12\mp\frac12}^{ \pm ,\rm LLF}
 - \varepsilon _D )/
(    D_{j+\frac12\mp\frac12}^{ \pm ,\rm LLF}
 - D_{j+\frac12\mp\frac12}^{ \pm ,\rm WENO}),
                  &  \mbox{if  } D_{j+\frac12\mp\frac12}^{ \pm ,\rm WENO}<\varepsilon _D,  \\
1, & \mbox{otherwise}.\end{cases}
\end{align*}

\noindent
{\tt Step II:
Enforce the positivity of  $q(\vec U)$}.
For each $j$, limit the numerical flux $\widehat {\vec F}_{j + \frac{1}{2}}^{D}$ as
\begin{equation}\label{eq:lmiterq}
\widehat {\vec F}_{j + \frac{1}{2}}^{\rm PCP}
 :=( 1 - \theta_{q,j+\frac{1}{2}} ) \widehat {\vec F}_{j + \frac{1}{2}}^{\rm LLF} + \theta_{q,j+\frac{1}{2}} \widehat {\vec F}_{j + \frac{1}{2}}^{D},
\end{equation}
where $\theta_{q,j+\frac{1}{2}} = \min \left\{ \theta_{q,j+\frac{1}{2}}^+ ,\theta_{q,j+\frac{1}{2}}^- \right\}$, and
\begin{align*}
\theta _{q,j + \frac{1}{2}}^ \pm
 = \begin{cases}
 \left( q(\vec U_{j+\frac12\mp\frac12}^{ \pm ,\rm LLF})
 - \varepsilon _q \right)/
\left(  q(\vec U_{j+\frac12\mp\frac12}^{ \pm ,\rm LLF} )
 - q(\vec U_{j+\frac12\mp\frac12}^{ \pm ,\rm D})\right),
                  &  \mbox{if  }
                  q(\vec U_{j+\frac12\mp\frac12}^{ \pm ,\rm D})
                  <\varepsilon _q,  \\
1, & \mbox{otherwise}.\end{cases}
\end{align*}




It is worth emphasizing that the above limitting procedure is slightly different from that in \cite{Hu2013}, because only the first component of the numerical flux vector is detected and limited in {\tt Step I}. Moreover,
the  finite  difference  schemes
		 \eqref{eq:semi-ds}
		 with the numerical flux
	  $\widehat {\vec F}_{j + \frac{1}{2}}=\widehat {\vec F}_{j + \frac{1}{2}}^{\rm PCP}$ given in \eqref{eq:lmiterq}
	  is obviously consistent with the 1D RHD equations \eqref{eq:1D} and physical-constraints-preserving (see Theorem
	  \ref{thm:PCP}),
            and  maintains ($2r-1$)th-order accuracy in the smooth region without vacuum  (see Theorem
            \ref{Thm23}).

\begin{thm}\label{thm:PCP}
Under the assumption of Lemma \ref{lam:LLF},
if $ D_j^{ \pm,\rm LLF } >0$ and $q(\vec U_j^{ \pm,\rm LLF })>0$ for all $j$, then
$\vec U_j^ {-,\rm PCP}, \vec U_j^ {+,\rm PCP} \in {\mathcal G}$, and
           $\vec U_j (t) + \Delta t  {\cal L}\left( {\vec U (t)};j \right)
           		=\frac12\left(\vec U_j^{+,\rm PCP}+\vec U_j^{-,\rm PCP}\right)	
           		 \in {\mathcal G}$  for all $j$, where
           		 $\vec U_j^{\pm,\rm PCP}  := \vec U_j  \mp \frac{{2\Delta t}}{{\Delta x}}   \widehat {\vec F}_{j \pm \frac{1}{2}}^{\rm PCP}$.
\end{thm}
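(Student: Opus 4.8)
The plan is to reduce the statement to the equivalent description ${\mathcal G}={\mathcal G}_1=\{\vec U:D>0,\ q(\vec U)>0\}$ of Lemma~\ref{lam:equDef}, together with the convexity of ${\mathcal G}$ (Lemma~\ref{lam:convex}) and the concavity of $q$ (Remark~\ref{rem:q}). First I would record, directly from \eqref{eq:semi-ds}, the algebraic identity
\[
\vec U_j + \Delta t\,{\cal L}(\vec U;j)
= \vec U_j - \frac{\Delta t}{\Delta x}\Big(\widehat{\vec F}^{\rm PCP}_{j+\frac12}-\widehat{\vec F}^{\rm PCP}_{j-\frac12}\Big)
= \frac12\Big(\vec U_j^{+,\rm PCP}+\vec U_j^{-,\rm PCP}\Big),
\]
so that, by convexity of ${\mathcal G}$, it suffices to prove $\vec U_j^{\pm,\rm PCP}\in{\mathcal G}$ for every $j$, and, by Lemma~\ref{lam:equDef}, it suffices to verify the two scalar inequalities $D_j^{\pm,\rm PCP}>0$ and $q(\vec U_j^{\pm,\rm PCP})>0$. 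The bookkeeping point I would stress is that $\vec U_j^{+,\cdot}$ and $\vec U_{j+1}^{-,\cdot}$ both depend only on the interface flux $\widehat{\vec F}_{j+\frac12}$, so correcting that flux with a single coefficient must simultaneously protect the ``outgoing'' state of cell $j$ and the ``incoming'' state of cell $j+1$; this is exactly why each limiter coefficient is defined as a minimum of a ``$+$'' and a ``$-$'' quantity.

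Next I would analyze {\tt Step I} \eqref{eq:lmiterD}. Since only the first flux component is altered and is replaced by the convex combination $(1-\theta_{D,j+\frac12})\{\widehat{\vec F}^{\rm LLF}_{j+\frac12}\}_1+\theta_{D,j+\frac12}\{\widehat{\vec F}^{\rm WENO}_{j+\frac12}\}_1$, the density update is affine in that component, so
\[
D_j^{+,D}=(1-\theta_{D,j+\frac12})D_j^{+,\rm LLF}+\theta_{D,j+\frac12}D_j^{+,\rm WENO},
\]
and likewise $D_{j+1}^{-,D}$ with the \emph{same} coefficient. Using $D_j^{\pm,\rm LLF}\ge\varepsilon_D>0$ (Lemma~\ref{lam:LLF} and the choice of $\varepsilon_D$), one checks that the explicit formula for $\theta_{D,j+\frac12}^{\pm}$ is precisely the largest admissible coefficient keeping $D_j^{+,D}\ge\varepsilon_D$, respectively $D_{j+1}^{-,D}\ge\varepsilon_D$; taking the minimum then gives $D_j^{\pm,D}\ge\varepsilon_D>0$ for all $j$. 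When the WENO density already exceeds $\varepsilon_D$ the coefficient equals $1$ and nothing is changed, which is what preserves high-order accuracy in smooth regions.

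Then I would treat {\tt Step II} \eqref{eq:lmiterq}, where the \emph{whole} flux vector is replaced by the convex combination $\widehat{\vec F}^{\rm PCP}_{j+\frac12}=(1-\theta_{q,j+\frac12})\widehat{\vec F}^{\rm LLF}_{j+\frac12}+\theta_{q,j+\frac12}\widehat{\vec F}^{D}_{j+\frac12}$, so that $\vec U_j^{+,\rm PCP}=(1-\theta_{q,j+\frac12})\vec U_j^{+,\rm LLF}+\theta_{q,j+\frac12}\vec U_j^{+,D}$ and similarly for $\vec U_{j+1}^{-,\rm PCP}$. The density stays positive because it is now a genuine convex combination of $D_j^{+,\rm LLF}\ge\varepsilon_D$ and $D_j^{+,D}\ge\varepsilon_D$. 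For $q$ I would invoke concavity (Remark~\ref{rem:q}),
\[
q(\vec U_j^{+,\rm PCP})\ \ge\ (1-\theta_{q,j+\frac12})\,q(\vec U_j^{+,\rm LLF})+\theta_{q,j+\frac12}\,q(\vec U_j^{+,D}),
\]
and, using $q(\vec U_j^{\pm,\rm LLF})\ge\varepsilon_q>0$, verify that the definition of $\theta_{q,j+\frac12}^{\pm}$ is exactly the largest coefficient keeping the right-hand side $\ge\varepsilon_q$; the minimum of the ``$+$'' and ``$-$'' coefficients then yields $q(\vec U_j^{\pm,\rm PCP})\ge\varepsilon_q>0$ for all $j$. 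Combining $D_j^{\pm,\rm PCP}>0$ with $q(\vec U_j^{\pm,\rm PCP})>0$ and Lemma~\ref{lam:equDef} gives $\vec U_j^{\pm,\rm PCP}\in{\mathcal G}$, and the identity of the first step together with convexity of ${\mathcal G}$ finishes the proof.

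As for the difficulty: the argument is a careful chaining of three facts — the density update is affine, $q$ is concave, and ${\mathcal G}={\mathcal G}_1$ is convex — and the only genuinely nontrivial ingredient (the one that makes the relativistic case work at all) is the concavity of $q(\vec U)=E-\sqrt{D^2+m^2}$, established via the Minkowski inequality in the proof of Lemma~\ref{lam:convex}; in the non-relativistic setting this role is played by the pressure, which is concave there but, as Fig.~\ref{fig:phi_lambda} shows, is no longer concave here, so $q$ is a necessary substitute. The remaining obstacle is purely one of bookkeeping: keeping track of the fact that a single interface flux limiter must serve two adjacent cells, which is what forces the two one-sided coefficients $\theta^{\pm}$ and their minimum rather than allowing an ad hoc choice.
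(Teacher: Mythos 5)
Your proposal is correct and follows essentially the same route as the paper's proof: reduce membership in ${\mathcal G}$ to the two scalar conditions $D>0$ and $q>0$ via Lemma \ref{lam:equDef}, exploit the affine dependence of $\vec U_j^{\pm,\cdot}$ on the interface flux together with the interval structure of the limiter coefficients for the density, invoke the concavity of $q$ (Remark \ref{rem:q}) for the second constraint, and finish with the convexity of ${\mathcal G}$. The only cosmetic difference is bookkeeping: the paper tracks the density through both limiting steps via the composed coefficient $\theta_{j+\frac12}=\theta_{D,j+\frac12}\theta_{q,j+\frac12}\le\theta_{D,j+\frac12}^{+}$ and an explicit $\theta/\theta^{+}$ splitting, whereas you argue stepwise (Step I gives $D_j^{\pm,D}\ge\varepsilon_D$, and Step II is then a convex combination of two densities each $\ge\varepsilon_D$), which is equivalent.
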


\begin{proof}
According to Lemma \ref{lam:LLF} and the previous flux limiting procedure, one has
$0\leq \theta_{D,j+\frac12}, \theta_{q,j+\frac12}\leq 1$, and
there exist two sufficiently small positive numbers
$\varepsilon _D $ and $\varepsilon _q$ such that
$ D_j^{ \pm,\rm LLF } \geq \varepsilon _D>0$ and $q(\vec U_j^{ \pm,\rm LLF }) \geq \varepsilon _q>0$ for all $j$.

Substituting \eqref{eq:lmiterD} into \eqref{eq:lmiterq} gives
\begin{equation*}
\left\{ \widehat {\vec F}_{j + \frac{1}{2}}^{\rm PCP} \right\}_{1}
 =( 1 - \theta_{j+\frac{1}{2}} )
 \left\{ \widehat {\vec F}_{j + \frac{1}{2}}^{\rm LLF} \right\}_{1}
 + \theta_{j+\frac{1}{2}} \left\{ \widehat {\vec F}_{j + \frac{1}{2}}^{\rm WENO} \right\}_{1},
\end{equation*}
where $0\leq \theta_{j+\frac{1}{2}}=\theta_{D,j+\frac{1}{2}} \theta_{q,j+\frac{1}{2}} \le \theta_{D,j+\frac{1}{2}}^+ $.
According to
the definition of $\theta^+_{D,j+1/2}$,  the inequality
\begin{align*}
(1- \theta^+_{D,j+1/2})D_j^{+,\rm LLF}+\theta^+_{D,j+1/2} D_j^{+,\rm WENO}
\geq \varepsilon_D,
\end{align*}
always holds.
Thus one has
\begin{align*}
D_{j + \frac{1}{2}}^{+,\rm PCP}
  &=  ( 1 - \theta_{j+\frac{1}{2}} )
 D_{j + \frac{1}{2}}^{+,\rm LLF}
 + \theta_{j+\frac{1}{2}} D_{j + \frac{1}{2}}^{+,\rm WENO} \\
&=
\frac{{\theta _{j + \frac{1}{2}} }}{{\theta _{D,j + \frac{1}{2}}^ +  }}
\left(
(1-\theta _{D,j + \frac{1}{2}}^+) D_j^{ + ,\rm LLF}
+ \theta _{D,j + \frac{1}{2}}^+  D_j^{ + ,\rm WENO}
 \right)
+ \left( {1 - \frac{{\theta _{j + \frac{1}{2}} }}{{\theta _{D,j + \frac{1}{2}}^ +  }}} \right) D_j^{ + ,\rm LLF}
\\
&
\geq
\frac{\theta _{j + \frac{1}{2}} }{\theta _{D,j + \frac{1}{2}}^ +  }  \varepsilon _D
+ \left( {1 - \frac{{\theta _{j + \frac{1}{2}} }}{{\theta _{D,j + \frac{1}{2}}^ +  }}} \right) D_j^{ + ,\rm LLF}
\ge \varepsilon _D > 0.
\end{align*}
On the other hand, similarly, making use of the concavity of $q(\vec U)$
 gives
\begin{align*}
 q\left( \vec  U_{j + \frac{1}{2}}^{ + ,\rm PCP}  \right)
 &
 = q\left( (1 - \theta _{q,j + \frac{1}{2}} ) \vec U_{j + \frac{1}{2}}^{ + ,\rm LLF}  + \theta _{q,j + \frac{1}{2}} \vec U_{j + \frac{1}{2}}^{ + ,D}  \right) \\
 &
  \ge (1 - \theta _{q,j + \frac{1}{2}} )q\left( \vec U_{j + \frac{1}{2}}^{ + ,\rm LLF}  \right) + \theta _{q,j + \frac{1}{2}} q\left( \vec U_{j + \frac{1}{2}}^{ + ,D}  \right) \\
  &
  = \frac{{\theta _{q,j + \frac{1}{2}} }}{{\theta _{q,j + \frac{1}{2}}^ +  }}\left( (1 - \theta _{q,j + \frac{1}{2}}^ +  )q\left( \vec U_{j + \frac{1}{2}}^{ + ,\rm LLF}  \right) + \theta _{q,j + \frac{1}{2}}^ +  q\left( \vec U_{j + \frac{1}{2}}^{ + ,D}  \right) \right) + \left( 1 - \frac{{\theta _{q,j + \frac{1}{2}} }}{{\theta _{q,j + \frac{1}{2}}^ +  }} \right)q\left( \vec  U_{j + \frac{1}{2}}^{ + ,\rm LLF}  \right) \\
  &
\geq
  \frac{{\theta _{q,j + \frac{1}{2}} }}{{\theta _{q,j + \frac{1}{2}}^ +  }}\varepsilon _q  + \left( {1 - \frac{{\theta _{q,j + \frac{1}{2}} }}{{\theta _{q,j + \frac{1}{2}}^ +  }}} \right)q\left( \vec U_{j + \frac{1}{2}}^{ + ,\rm LLF}  \right) \ge \varepsilon _q  > 0.
 \end{align*}
With the equivalent definition of the admissible state set ${\mathcal G}$ in Lemma \ref{lam:equDef}, one knows that $\vec U_j^ {+,\rm PCP} \in {\mathcal G}$.
Similarly, one can also prove that $\vec U_j^ {-,\rm PCP} \in {\mathcal G}$. The proof is completed. \qed
\end{proof}

The following is to check the accuracy of the schemes
		 \eqref{eq:semi-ds}
		 with the numerical flux
	  $\widehat {\vec F}_{j + \frac{1}{2}}
	  =\widehat {\vec F}_{j + \frac{1}{2}}^{\rm PCP}$ given in \eqref{eq:lmiterq}.
 In fact,  the above flux limiting procedure implies that
$$
\left\|\widehat {\vec F}_{j + \frac{1}{2}}^{\rm PCP} - \widehat {\vec F}_{j + \frac{1}{2}}^{\rm WENO} \right\| \le
(1-\theta_{j+\frac{1}{2}}) \left\|\widehat {\vec F}_{j + \frac{1}{2}}^{\rm LLF} - \widehat {\vec F}_{j + \frac{1}{2}}^{\rm WENO} \right\|.
$$
Thus if
\begin{align}
\label{EQ:thz}
1-\theta_{j+\frac{1}{2}} = {\cal O} (\Delta x^{2r-1}),
\end{align}
then  the schemes
		 \eqref{eq:semi-ds}
		 with the numerical flux
	  $\widehat {\vec F}_{j + \frac{1}{2}}
	  =\widehat {\vec F}_{j + \frac{1}{2}}^{\rm PCP}$
	  is  $(2r-1)$th-order accurate  because both ${\vec U}_{j + \frac{1}{2}}^{+,\rm LLF}$ and ${\vec U}_{j + \frac{1}{2}}^{+,\rm WENO}$ are  bounded in smooth regions.

\begin{thm}\label{Thm23}
Assuming that the exact solution $\vec U(x)$ is smooth
and satisfies that $D(x)\ge \varepsilon>0$  and
$q(\vec U(x)) \ge \varepsilon>0$  for all $x$,
where $\varepsilon >\frac{2}{1-\hat w} \max\{\varepsilon_D,\varepsilon_q\}$,
 and
 the approximate solution $\vec U_j \in {\mathcal G}$
and $\vec U_j  = \vec U(x_j ) +{\cal O} (\Delta x^{2r-1})$ for all $j$ and
sufficiently small $\Delta x$, then
$$
\theta _{D,j + \frac{1}{2}}^ \pm = 1 + {\cal O} (\Delta x^{2r-1}),\quad \theta _{q,j + \frac{1}{2}}^ \pm = 1 + {\cal O} (\Delta x^{2r-1}),
$$
and \eqref{EQ:thz} hold for the given $(2r-1)$th-order accurate numerical flux $\widehat {\vec F}_{j + \frac{1}{2}}^{\rm WENO}$ under the CFL-type condition 
\begin{equation}\label{eq:CFL-PCP}
\Delta t \le \frac{{ \hat w \Delta x}}{{ 2 \mathop {\max }\limits_{j} \alpha_{j+\frac{1}{2}}}},
\end{equation}
where $\hat w$ is any positive constant less than one.
\end{thm}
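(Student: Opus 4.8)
The plan is to prove the sharper fact that, once $\Delta x$ is below a fixed threshold, $D_j^{\pm,\rm WENO}\ge\varepsilon_D$ and $q(\vec U_j^{\pm,\rm WENO})\ge\varepsilon_q$ hold for every $j$, so that every limiter in {\tt Step I} and {\tt Step II} is inactive: $\theta_{D,j+\frac12}^\pm=\theta_{q,j+\frac12}^\pm=1$ exactly (a fortiori $1+{\cal O}(\Delta x^{2r-1})$), hence $\theta_{j+\frac12}=\theta_{D,j+\frac12}\theta_{q,j+\frac12}=1$, \eqref{EQ:thz} holds trivially with $1-\theta_{j+\frac12}=0$, $\widehat{\vec F}_{j+\frac12}^{\rm PCP}=\widehat{\vec F}_{j+\frac12}^{\rm WENO}$, and the $(2r-1)$th-order accuracy reduces to that of the underlying WENO scheme exactly as noted just before the theorem. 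First I would record uniform bounds: since $\vec U(x)$ is smooth with $D(x)\ge\varepsilon$ and $q(\vec U(x))\ge\varepsilon$, its range is a compact subset $\mathcal K$ of ${\mathcal G}$, on which $\rho,p,h,W,c_s$ and $\varrho_1$ are bounded and bounded away from their degenerate values; because $\vec U_j=\vec U(x_j)+{\cal O}(\Delta x^{2r-1})\in{\mathcal G}$, for small $\Delta x$ all stencil states $\vec U_k$ lie in a slightly enlarged compact subset of ${\mathcal G}$, and by construction $\alpha_{j+\frac12}\ge\varrho_1(\vec U_k)$ for every $k$ in the relevant stencil while $\varrho_{\min}\le\alpha_{j+\frac12}\le\alpha_{\max}$ with $\varrho_{\min},\alpha_{\max}$ independent of $j,\Delta x$. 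I also use that in the smooth region $\widehat{\vec F}_{j\pm\frac12}^{\rm WENO}$ and $\widehat{\vec F}_{j\pm\frac12}^{\rm LLF}$ are both consistent and their numerical fluxes agree to ${\cal O}(\Delta x)$, so that $\vec U_j^{\pm,\rm WENO}=\vec U_j^{\pm,\rm LLF}+{\cal O}(\Delta x)$ (the factor $\tfrac{2\Delta t}{\Delta x}$ being ${\cal O}(1)$ under \eqref{eq:CFL-PCP}).

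It then suffices to bound $D_j^{\pm,\rm LLF}$ and $q(\vec U_j^{\pm,\rm LLF})$ below by a positive constant independent of $\Delta x$. For this I reuse the convex decomposition from the proof of Lemma \ref{lam:LLF}: $\vec U_j^{+,\rm LLF}=(1-\beta_j)\vec U_j+\tfrac{\beta_j}{2}\big[(\vec U_j-\alpha_{j+\frac12}^{-1}\vec F_1(\vec U_j))+(\vec U_{j+1}-\alpha_{j+\frac12}^{-1}\vec F_1(\vec U_{j+1}))\big]$ with $\beta_j:=\frac{2\Delta t\,\alpha_{j+\frac12}}{\Delta x}\le\hat w<1$ under \eqref{eq:CFL-PCP}. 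Since $\alpha_{j+\frac12}\ge\varrho_1(\vec U_j),\varrho_1(\vec U_{j+1})$, property (iii) of Lemma \ref{lam:propertyG} puts all three states in ${\mathcal G}$, so $D$ and $q$ of each are nonnegative; using the concavity of $q$ (Remark \ref{rem:q}) and nonnegativity, $q(\vec U_j^{+,\rm LLF})\ge(1-\beta_j)q(\vec U_j)\ge(1-\hat w)q(\vec U_j)$ and likewise $D_j^{+,\rm LLF}\ge(1-\hat w)D_j$, and since $\vec U_j=\vec U(x_j)+{\cal O}(\Delta x^{2r-1})$ the Lipschitz continuity of $q$ gives $q(\vec U_j),D_j\ge\varepsilon-{\cal O}(\Delta x^{2r-1})$. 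Hence $D_j^{+,\rm LLF},q(\vec U_j^{+,\rm LLF})\ge(1-\hat w)\varepsilon-{\cal O}(\Delta x^{2r-1})$, so $D_j^{+,\rm WENO},q(\vec U_j^{+,\rm WENO})\ge(1-\hat w)\varepsilon-{\cal O}(\Delta x)$ (the last using Lipschitz continuity of $q$ again to transfer through the ${\cal O}(\Delta x)$ perturbation). Because the hypothesis gives $(1-\hat w)\varepsilon>2\max\{\varepsilon_D,\varepsilon_q\}$, there is a fixed slack $\min\{\varepsilon_D,\varepsilon_q\}$ between the derived bound and the thresholds, so for $\Delta x$ small enough $D_j^{+,\rm WENO}\ge\varepsilon_D$ and $q(\vec U_j^{+,\rm WENO})\ge\varepsilon_q$; the same argument applied with $\widehat{\vec F}_{j-\frac12}$ handles the $-$ states. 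Consequently $\theta_{D,j+\frac12}^\pm=1$, whence $\widehat{\vec F}_{j+\frac12}^{D}=\widehat{\vec F}_{j+\frac12}^{\rm WENO}$ and $\vec U_j^{\pm,\rm D}=\vec U_j^{\pm,\rm WENO}$, and then $q(\vec U_j^{\pm,\rm D})\ge\varepsilon_q$ forces $\theta_{q,j+\frac12}^\pm=1$ as well.

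The step I expect to be the main obstacle is making rigorous the uniform-in-the-smooth-region statement that $\widehat{\vec F}_{j+\frac12}^{\rm WENO}-\widehat{\vec F}_{j+\frac12}^{\rm LLF}={\cal O}(\Delta x)$ (equivalently $\vec U_j^{\pm,\rm WENO}=\vec U_j^{\pm,\rm LLF}+{\cal O}(\Delta x)$) for the flux actually used here — assembled from a Roe-matrix characteristic decomposition, a WENO reconstruction of the split quantities $\vec{\overline H}^\pm$, and the recombination \eqref{eq:WENOflux}, possibly with the $\vartheta\ge1$-scaled viscosity coefficient. This rests on the standard accuracy of WENO reconstructions in smooth regions applied componentwise to the smoothly varying characteristic variables, together with the ${\cal O}(\Delta x^2)$ relation between the sliding-average numerical-flux function and $\vec F_1(\vec U(x))$ and the observation that the LLF viscosity term $-\tfrac{\alpha_{j+\frac12}}{2}(\vec U_{j+1}-\vec U_j)$ is ${\cal O}(\Delta x)$. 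A secondary technical point, already neutralized above, is reconciling $\alpha_{j+\frac12}$ (a stencil maximum of approximate/Roe spectral radii) with the hypothesis $\alpha\ge\varrho_1$ of Lemma \ref{lam:propertyG}(iii): by the very definition of $\alpha_{j+\frac12}$ one has $\alpha_{j+\frac12}\ge\varrho_1(\vec U_k)$ for each stencil index $k$, so Lemma \ref{lam:propertyG}(iii) is invoked directly at the grid states $\vec U_k$, and the exact solution enters only through the ${\cal O}(\Delta x^{2r-1})$ and ${\cal O}(\Delta x)$ Lipschitz estimates.
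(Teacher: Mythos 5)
Your argument is correct, but it reaches the conclusion by a different (and in fact stronger) route than the paper. The paper never tries to show the limiter is switched off: it works directly in the nontrivial case $\theta_{D,j+\frac12}^{+}<1$ (resp.\ $\theta_{q,j+\frac12}^{+}<1$), writes $1-\theta_{D,j+\frac12}^{+}=(\varepsilon_D-D_j^{+,\rm WENO})/(D_j^{+,\rm LLF}-D_j^{+,\rm WENO})$, bounds the denominator away from zero by exactly the convex-splitting and concavity estimates you use (this is where the hypothesis $\varepsilon>\frac{2}{1-\hat w}\max\{\varepsilon_D,\varepsilon_q\}$ enters, via Lemma \ref{lam:LLF}, Lemma \ref{lam:propertyG}(iii) and Remark \ref{rem:q}), and then bounds the numerator by ${\cal O}(\Delta x^{2r-1})$ through an auxiliary state $\vec U_j^{e}=\vec U_j^{+,\rm LLF}+\frac{2\Delta t}{\Delta x}\bigl(\widehat{\vec F}_{j+\frac12}^{\rm LLF}-\vec H(x_{j+\frac12})\bigr)$, which is ${\cal O}(\Delta x)$-close to the LLF state and ${\cal O}(\Delta x^{2r-1})$-close to the WENO state, the Lipschitz estimate \eqref{EQ:Lip} transferring this to $q$. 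You instead show that $D_j^{\pm,\rm WENO}$ and $q(\vec U_j^{\pm,\rm WENO})$ remain above the thresholds $\varepsilon_D,\varepsilon_q$ once $\Delta x$ is small, so that every $\theta$ equals $1$ exactly and \eqref{EQ:thz} is vacuous; for this you only need the ${\cal O}(\Delta x)$ closeness of the WENO and LLF fluxes, not their $(2r-1)$th-order accuracy, together with the same lower-bound machinery ($D_j^{+,\rm LLF}\ge(1-\hat w)D_j$, $q(\vec U_j^{+,\rm LLF})\ge(1-\hat w)q(\vec U_j)$ under \eqref{eq:CFL-PCP}) and the same margin hypothesis on $\varepsilon$. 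The paper's formulation buys an explicit rate for $1-\theta$ whenever the limiter does fire, obtained directly from the $(2r-1)$th-order flux accuracy and without appealing to an ``eventually inactive'' mesh threshold whose size depends on the hidden ${\cal O}(\Delta x)$ constant; your formulation buys a cleaner and sharper statement (in smooth regions bounded away from the constraint boundary the limiter is asymptotically inert), at the same level of rigor as the paper, since the uniformity in $j$ of the consistency constants and the assumed accuracy of $\widehat{\vec F}_{j+\frac12}^{\rm WENO}$ are taken for granted there as well. Your handling of the secondary points (the $\vartheta$-scaled viscosity still dominating $\varrho_1$ at the stencil states, the boundedness of $2\Delta t/\Delta x$, and propagating the bound through Step II via $\vec U^{\pm,\rm D}=\vec U^{\pm,\rm WENO}$ once Step I is inactive) is sound.
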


\begin{proof}
Only estimations of  $\theta _{D,j + \frac{1}{2}}^+$ and $\theta _{q,j + \frac{1}{2}}^+$
are given below, while other cases are similar and will be omitted here.

Before that,  some basic conclusions are first listed as follows.
\begin{itemize}
  \item Lemma \ref{lam:LLF} implies that
\begin{equation}\label{eq:ULLF_G_2}
\vec U_j  \mp \frac{{2\Delta t}}{{\hat w \Delta x}}
 \widehat {\vec F}_{j \pm \frac{1}{2}}^{\rm LLF} \in {\mathcal G},
\end{equation}
under the CFL-type condition \eqref{eq:CFL-PCP}.
  \item Since
\begin{equation*}
\left\| \widehat {\vec F}_{j + \frac{1}{2}}^{\rm LLF} - \vec H(x_{j+\frac{1}{2}}) \right\| = {\cal O} (\Delta x),\quad
\left\| \widehat {\vec F}_{j + \frac{1}{2}}^{\rm WENO} - \vec H(x_{j+\frac{1}{2}}) \right\| = {\cal O} (\Delta x^{2r-1}),
\end{equation*}
where the vector function $\vec H(x)$ is implicitly defined by
\[
\vec F_1(\vec U(x)) = \frac{1}{{\Delta x}}\int_{x - \frac{{\Delta x}}{2}}^{x + \frac{{\Delta x}}{2}} {\vec H(\xi )} ~{\rm d}\xi,
\]
one has
\begin{align*} 
 \vec U_j^e :& = \vec U_j^{ + ,\rm LLF}  + \frac{{2\Delta t}}{{\Delta x}}\left( { \widehat {\vec F}_{j + \frac{1}{2}}^{\rm LLF}  - \vec H(x_{j + \frac{1}{2}} )} \right)\\
 &= \vec U_j^{ + ,\rm LLF}  + \frac{{2\Delta t}}{{\Delta x}}\left( {\widehat {\vec F}_{j + \frac{1}{2}}^{\rm LLF}  - \widehat {\vec F}_{j + \frac{1}{2}}^{\rm WENO} } \right)
 + {\cal O} (\Delta x^{2r-1})\\
  &= \vec U_j  - \frac{{2\Delta t}}{{\Delta x}} \widehat {\vec F}_{j + \frac{1}{2}}^{\rm WENO} + {\cal O} (\Delta x^{2r-1})
   = \vec U_j^{ + ,\rm WENO}  + {\cal O} (\Delta x^{2r-1}).
\end{align*}
It further yields
\begin{align}\label{eq:UeULLF_D}
&\left|  D_j^e - D_j^{ + ,\rm LLF} \right| \le \left\|  \vec U_j^e - \vec U_j^{ + ,\rm LLF} \right\| = {\cal O} (\Delta x), \\
\label{eq:UeULLF_q}
&\left|  q(\vec U_j^e) - q(\vec U_j^{ + ,{\rm LLF}}) \right| \le \sqrt{2} \left\|  \vec U_j^e - \vec U_j^{ + ,\rm LLF} \right\| = {\cal O} (\Delta x).\\
\label{eq:UeUWENO_D}
&\left|  D_j^e - D_j^{ + ,{\rm WENO}} \right| \le \left\|  \vec U_j^e - \vec U_j^{ + ,\rm WENO} \right\| = {\cal O} (\Delta x^{2r-1}), \\
\label{eq:UeUWENO_q}
&\left|  q(\vec U_j^e) - q(\vec U_j^{ + ,\rm WENO}) \right| \le \sqrt{2} \left\|  \vec U_j^e - \vec U_j^{ + ,\rm WENO} \right\| = {\cal O} (\Delta x^{2r-1}),
\end{align}
where 
\eqref{EQ:Lip} has been used.
\end{itemize}

{\tt (i): Estimate $\theta _{D,j + \frac{1}{2}}^+$}.
The case of that $\theta _{D,j + \frac{1}{2}}^+ = 1$ is trival.
Assuming $0\leq  \theta _{D,j + \frac{1}{2}}^+ < 1$, which implies $D_j^{ + ,\rm WENO} < \varepsilon _D $, then
$$
1 - \theta _{D,j + \frac{1}{2}}^ + = 1-\frac{   D_j^{ + ,\rm LLF}  - \varepsilon _D }
{   D_j^{ + ,\rm LLF}    - D_j^{ + ,\rm WENO}   }
=\frac{ \varepsilon _D - D_j^{ + ,\rm WENO}  } { D_j^{ + ,\rm LLF}   - D_j^{ + ,\rm WENO}  }
< \frac{ \left| \varepsilon _D - D_j^{ + ,\rm WENO}  \right| }
{ D_j^{ + ,\rm LLF}   - \varepsilon _D  }.
$$
Since $D_j$ is a $(2r-1)$th-order accurate approximation to $D(x_j)$,
one has $D_j=D(x_j)+{\cal O} (\Delta x^{2r-1})\ge \varepsilon+ {\cal O} (\Delta x^{2r-1}) \ge \frac{\varepsilon}{2}$. Thanks to \eqref{eq:ULLF_G_2}, one gets
\begin{align*}
D_j^{ + ,\rm LLF}   - \varepsilon _D = (1-\hat w) D_j + \hat w \left( D_j - \frac{2\Delta t}{\hat w \Delta x}
\left \{ \widehat {\vec F}_{j + \frac{1}{2}}^{\rm LLF} \right\}_1\right) - \varepsilon _D \ge { (1-\hat w)\frac{ \varepsilon}{2}- \varepsilon _D >0},
\end{align*}
which shows that $D_j^{ + ,\rm LLF}   - \varepsilon _D$ is bounded away from zero. Then, one only needs to
show $\left| \varepsilon _D - D_j^{ + ,\rm WENO}  \right| = {\cal O} (\Delta x^{2r-1})$. Note that \eqref{eq:UeULLF_D} implies
\[
D_j^e  - \varepsilon _D  = D_j^{ + ,\rm LLF}  - \varepsilon _D  + {\cal O} (\Delta x) \ge (1 - \hat w)\frac{\varepsilon}{2} - \varepsilon _D  + {\cal O} (\Delta x)
 \ge 
\frac12 \left( (1 - \hat w)\frac{\varepsilon}{2} - \varepsilon _D \right)
 > 0.
\]
 Thus
$$
\left| \varepsilon _D - D_j^{ + ,\rm WENO}  \right|
= \varepsilon _D - D_j^{ + ,\rm WENO} < D_j^e - D_j^{ + ,\rm WENO} = {\cal O} (\Delta x^{2r-1}),
$$
where \eqref{eq:UeUWENO_D} has been used.

{\tt (ii):  Estimate $\theta _{q,j + \frac{1}{2}}^+$}.
Similarly, only consider the nontrival case that $0\leq \theta _{q,j + \frac{1}{2}}^+ < 1$ implying  $q(\vec U_j^{ + ,\rm WENO} ) < \varepsilon _q $. Thus
$$
1 - \theta _{q,j + \frac{1}{2}}^ +
=\frac{ \varepsilon _q - q(\vec U_j^{ + ,\rm WENO})  } { q (\vec U_j^{ + ,\rm LLF})   - q (\vec U_j^{ + ,\rm WENO})  }
< \frac{ \left| \varepsilon _q - q (\vec U_j^{ + ,\rm WENO})  \right| }
{ q (\vec U_j^{ + ,\rm LLF})   - \varepsilon _q  }.
$$
Because $\vec U_j$ is a $(2r-1)$th-order accurate approximation to $\vec U(x_j)$, $q(\vec U_j)$ is also a $(2r-1)$th-order accurate approximation to $q(\vec U(x_j))$ by
the Lipschitz continuity of $q(\vec U)$. Thus it holds that $q(\vec U_j ) \ge \varepsilon- {\cal O} (\Delta x^{2r-1}) \ge \frac{\varepsilon}{2}$.
With the help of the concavity of $q(\vec U)$ and \eqref{eq:ULLF_G_2}, one may know that $q(\vec U_j^{ + ,\rm LLF})   - \varepsilon _{{q}}$ is bounded away from zero because
\begin{align*}
q(\vec U_j^{ + ,\rm LLF})   - \varepsilon _{{q}}
&= q\left(  (1-\hat w) \vec U_j + \hat w \left( \vec U_j - \frac{2\Delta t}{\hat w \Delta x}\widehat {\vec F}_{j + \frac{1}{2}}^{\rm LLF} \right) \right) - \varepsilon _{{q}} \\
&\ge (1-\hat w) q( \vec U_j )  + \hat w q \left( \vec U_j - \frac{2\Delta t}{\hat w \Delta x}\widehat {\vec F}_{j + \frac{1}{2}}^{\rm LLF} \right)  - \varepsilon _{{q}} \\
&\ge (1-\hat w) q( \vec U_j ) - \varepsilon _{{q}} \ge \frac{1-\hat w}{2} \varepsilon - \varepsilon _{{q}} >0.
\end{align*}
 Then, one turns to show $\left| \varepsilon _q - q(\vec U_j^{ + ,\rm WENO})  \right| = {\cal O} (\Delta x^{2r-1})$.
In fact, \eqref{eq:UeULLF_q} implies
\[
q(\vec U_j^e)  - \varepsilon _q  = q(\vec U_j^{ + ,\rm LLF})  - \varepsilon _{{ q}}  + {\cal O} (\Delta x) \ge
(1 - \hat w)
\frac{{\varepsilon}}{2} - \varepsilon _{{ q}}
+ {\cal O} (\Delta x) \ge
\frac12\left(  (1 - \hat w)
\frac{{\varepsilon}}{2} - \varepsilon _{{ q}} \right)
> 0.
\]
 Therefore
$$
\left| \varepsilon _q - q(\vec U_j^{ + ,\rm WENO} ) \right|
= \varepsilon _q - q(\vec U_j^{ + ,\rm WENO}) < q(\vec U_j^e) - q(\vec U_j^{ + ,\rm WENO}) = {\cal O} (\Delta x^{2r-1}),
$$
where \eqref{eq:UeUWENO_q} has been used.

Using the above results and
$$\theta_{j+\frac{1}{2}}=\theta_{D,j+\frac{1}{2}} \theta_{q,j+\frac{1}{2}}
= \min \left\{ \theta_{D,j+\frac{1}{2}}^+ ,\theta_{D,j+\frac{1}{2}}^- \right\}  \min \left\{ \theta_{q,j+\frac{1}{2}}^+ ,\theta_{q,j+\frac{1}{2}}^- \right\},$$
gives \eqref{EQ:thz}.

The proof is completed. \qed
\end{proof}

\subsubsection{Time discretization}
\label{sec:Time}

Time derivatives in the semi-discrete schemes \eqref{eq:semi-ds}
 can be approximated  by using some high-order strong stability preserving
(SSP) method \cite{Gottlieb2009}. A special example considered here is the third order accurate
 SSP explicit Runge-Kutta method
\begin{align} \label{eq:RK1} \begin{aligned}
& \vec U^ *_j   = \vec U^n_j  + \Delta t_n {\cal L}(\vec U^n;j ), \\
& \vec U^{ *  * }_j  = \frac{3}{4}\vec U^n_j  + \frac{1}{4}\Big(\vec U^ *_j
 + \Delta t_n {\cal L}(\vec U^ *;j  )\Big), \\
& \vec U^{n+1}_j  = \frac{1}{3} \vec U^n_j  + \frac{2}{3}\Big(\vec U^{ *  * }_j
+ \Delta t_n {\cal L}(\vec U^{ *  * };j)\Big),
\end{aligned}\end{align}
with $ {\cal L}\left( {\vec U};j \right)
           		=\big(\hat{\vec F}_{j-\frac12}^{\rm PCP}  ({\vec U})
           		-\hat{\vec F}_{j+\frac12}^{\rm PCP} ({\vec U})   \big)/\Delta x$.

In practical computations, the time stepsize selection strategy
 in \cite{wang2012} may be adopted to improve computational efficiency,
  and the  physical-constraints-preserving flux limiter may also be
 	slightly modified and implemented
 	via enforcing directly $ \vec U^{ *  * }_j\in {\mathcal G}_1 $ and $ \vec U^{ n+1}_j\in {\mathcal G}_1$
 	in the second and third stages in \eqref{eq:RK1}.
Taking the second stage as an example, 	$\vec U^{\pm,{WENO}}_j$ in the previous
flux limiting procedure is replaced with
 	$\vec U^{\pm,{\rm WENO}}_j=\frac34 \vec U^n_j +\frac14
 	\big(    \vec U^*_j \mp \frac{2\Delta t_n} {\Delta x}
 	\hat{\vec F}_{j\pm \frac12}^{\rm WENO} (\vec U^*)
 	\big)$.


%

\subsection{Two-dimensional case}
\label{sec:2Dscheme}

The high-order accurate physical-constraints-preserving  finite difference WENO schemes
presented in Subsection \ref{sec:1Dscheme}  
can be easily extended to multidimensional RHD equations \eqref{eqn:coneqn3d}.
This section  only presents  its  extension to  the 2D RHD equations
in the laboratory frame
\begin{equation}\label{eq:2D}
	\displaystyle\frac{\partial \vec{U}}{\partial t} +
	\frac{\partial \vec{F}_1(\vec{U})}{\partial x}
	+ \frac{\partial \vec{F}_2(\vec{U})}{\partial y}=0,
\end{equation}
where
\begin{align*}
	\vec{U} =& (D, m_1, m_2, E)^T,\quad
	\vec{F}_1 = (Dv_1, m_1 v_1 + p,  m_2 v_1, m_1)^T,\\
	\vec{F}_2 =& (Dv_2, m_1 v_2,  m_2 v_2+p,   m_2)^T.
\end{align*}

Let us divide the spatial domain   $\Omega$ into  a rectangular mesh
with the cell
$\{(x,y)|~x_{j-\frac{1}{2}}
< x < x_{j+\frac{1}{2}}, y_{k-\frac{1}{2}}<y<y_{k+\frac{1}{2}}\}$
where $x_{j+\frac{1}{2}}
=(j+\frac{1}{2})\Delta x$ and $y_{k+\frac{1}{2}}=(k+\frac{1}{2})\Delta y$, $j,k \in \mathbb{Z}$,
and both spatial stepsizes $\Delta x$ and $\Delta y$ are given positive constants.

Then a semi-discrete, ($2r-1$)th-order accurate, conservation finite difference scheme
for  the 2D RHD equations \eqref{eq:2D} may be derived as
\begin{equation}\label{eq:2Dtimeds}
\frac{{d\vec U_{j,k} (t)}}{{dt}}
=  \frac{   \widehat {\vec F}_{j - \frac{1}{2},k}^{1,\rm PCP}  - \widehat {\vec F}_{j + \frac{1}{2},k}^{1,\rm PCP}     }{{\Delta x}}
 + \frac{   \widehat {\vec F}_{j,k- \frac{1}{2}}^{2,\rm PCP}  - \widehat {\vec F}_{j,k+ \frac{1}{2}}^{2,\rm PCP}     }{{\Delta y}}
= : {\cal L} (\vec U (t);j,k),
\end{equation}
where the numerical flux $\widehat {\vec F}_{j + \frac{1}{2},k}^{1,\rm PCP}$
(resp. $\widehat {\vec F}_{j,k+ \frac{1}{2}}^{2,\rm PCP}$) is derived by  using the procedure in Subsection \ref{sec:1Dscheme} for each fixed $k$ (resp. $j$) based on
the local Lax-Firedrichs splitting and 1D high-order WENO reconstruction with physical-constraints-preserving flux limiter.
The time derivatives in \eqref{eq:2Dtimeds} may be approximated
by utilizing the high-order accurate SSP Runge-Kutta methods, e.g. \eqref{eq:RK1}.

Because of the convex decomposition
\begin{align*}
 \vec U_{j,k}   &+ \Delta t {\cal L} (\vec U; {j,k} )
 = \frac{{\hat{\tau} _1 }}{2}\left( {\vec U_{j,k}  + \frac{{2\Delta t}}{{\hat{\tau} _1 \Delta x}}   \widehat{\vec F}_{j - \frac{1}{2},k}^{1,\rm PCP}   } \right)
+ \frac{{\hat{\tau} _1 }}{2}\left( {\vec U_{j,k}  - \frac{{2\Delta t}}{{\hat{\tau} _1 \Delta x}}  \widehat{\vec F}_{j + \frac{1}{2},k}^{1,\rm PCP}  } \right) \\
&   +\frac{{\hat{\tau} _2 }}{2}\left( {\vec U_{j,k}  + \frac{{2\Delta t}}{{\hat{\tau} _2 \Delta y}}  \widehat{\vec F}_{j,k- \frac{1}{2}}^{2,\rm PCP}   } \right)
+ \frac{{\hat{\tau} _2 }}{2}\left( {\vec U_{j,k}  - \frac{{2\Delta t}}{{\hat{\tau} _2 \Delta y}}  \widehat{\vec F}_{j,k+ \frac{1}{2}}^{2,\rm PCP}  } \right),
\end{align*}
with $\hat{\tau} _i = {\tau_i}/(\tau_1 + \tau_2)$, $i=1,2$, and
\begin{equation}\label{eq:CFLtau}
\tau _1  = {(\Delta x)^{-1}}{\mathop {\max }\limits_{j,k} \{\alpha _{j + \frac{1}{2},k} \}},\quad
\tau _2  = {(\Delta y)^{-1}} {{\mathop {\max }\limits_{j,k} \{\alpha _{j,k + \frac{1}{2}}\} }},
\end{equation}
it is convenient to verify that the solutions
of such resulting fully-discrete schemes belong to
 the admissible state set ${\mathcal G}$ under the CFL-type condition
\begin{equation}\label{eq:CFL2D}
\Delta t \le \frac{ \hat w }{ 2(\tau_1 + \tau _2) },
\end{equation}
where $\hat w$ is any positive constant less than one. In practical computations,
the viscosity coefficients may be taken as
\begin{align*}
\alpha _{j + \frac{1}{2},k} &=\vartheta \max \left\{ {\varrho _{j + \frac{1}{2},k}^{x,ROE} ,\varrho_1 \left( {\vec U_{j - r + 1,k} } \right), \cdots ,\varrho_1 \left( {\vec U_{j + r,k} } \right)} \right\},\\
\alpha _{j,k+ \frac{1}{2}} &=\vartheta \max \left\{ {\varrho _{j,k+ \frac{1}{2}}^{y,ROE} ,\varrho_2 \left( {\vec U_{j,k- r + 1} } \right), \cdots ,\varrho_2 \left( {\vec U_{j,k + r} } \right)} \right\},
\end{align*}
where 
$\varrho _{j + \frac{1}{2},k}^{x,ROE}$ (resp. $\varrho _{j,k+ \frac{1}{2}}^{y,ROE}$)
is the spectral radius of the Roe matrix $\widehat{\vec A}_1(\vec U_{j,k},\vec U_{j+1,k}) $ (resp. $\widehat{\vec A}_2(\vec U_{j,k},\vec U_{j,k+1}) $),  see \cite{EulderinkMel:1995}, approximating the Jacobian matrix $ \vec A_1(\vec U)$ (resp. $\vec A_2(\vec U)$).

%
%
%

The above high-order accurate 
finite difference schemes 
can also be extended to the axisymmetric RHD equations in cylindrical coordinates $(r,z)$
 \begin{equation}\label{eq:2Daxis}
    \displaystyle\frac{\partial \vec{U}}{\partial t} +
    \frac{\partial \vec{F}_1(\vec{U})}{\partial r} + \frac{\partial \vec{F}_2(\vec{U})}{\partial z}= \vec S (\vec U,r),
 \end{equation}
where the flux $\vec{F}_i$ is the same as one in \eqref{eq:2D}, $i=1,2$, $r\geq 0$,
and the source term
$$
\vec S (\vec U,r) = -\frac{1}{r} (Dv_1, m_1 v_1,  m_2 v_1, m_1)^T.
$$
Similarly, when the computational domain $\Omega$ in cylindrical coordinates $(r,z)$
is divided into a uniform  mesh with the rectangular cell $\{(r,z)|~r_{j-\frac{1}{2}}
< r < r_{j+\frac{1}{2}}, z_{k-\frac{1}{2}}<z<z_{k+\frac{1}{2}}\}$, where
$r_{j-\frac{1}{2}}=(j-\frac{1}{2})\Delta r$ $j \in \mathbb{Z^+}$and $z_{k+\frac{1}{2}}=(k+\frac{1}{2})\Delta z$,
$j,k \in \mathbb{Z}$, and
$\Delta r$ and $\Delta z$ are spatial stepsizes in $r$- and $z$-directions, respectively,
the extension of the scheme \eqref{eq:2Dtimeds} to the system
\eqref{eq:2Daxis} is
\begin{align} \nonumber
\frac{{d\vec U_{j,k} (t)}}{{dt}}
&=  \frac{   \widehat {\vec F}_{j - \frac{1}{2},k}^{1,\rm PCP}  - \widehat {\vec F}_{j + \frac{1}{2},k}^{1,\rm PCP}     }{{\Delta r}}
 + \frac{   \widehat {\vec F}_{j,k- \frac{1}{2}}^{2,\rm PCP}  - \widehat {\vec F}_{j,k+ \frac{1}{2}}^{2,\rm PCP}     }{{\Delta z}}
 + \vec S(\vec U_{j,k} (t), r_j)\\[1mm] \label{eq:2Daxis-timeds}
&= : {\cal L}(\vec U (t);{j,k})  + \vec S(\vec U_{j,k} (t), r_j),
\end{align}
where the numerical fluxes $\widehat {\vec F}_{j + \frac{1}{2},k}^{1,\rm PCP}$
and $\widehat {\vec F}_{j,k + \frac{1}{2}}^{2,\rm PCP}$ are the same as those used in
\eqref{eq:2Dtimeds}.

The question is whether the scheme \eqref{eq:2Daxis-timeds} is
still physical-constraints-preserving? 
Since the term $\vec U_{j,k} + \Delta t ( {\cal L} (\vec U;{j,k} ) + \vec S(\vec U_{j,k} , r_j) )$ may be decomposed into
$$
(1-\beta) \big( \vec U_{j,k} + \frac{\Delta t} {1-\beta}   {\cal L} (\vec U;{j,k} ) \big)
    + \beta  \big( \vec U_{j,k} + \frac{\Delta t}{\beta} \vec  S(\vec U_{j,k} , r_j) \big),
$$
for any $\beta \in (0,1)$,
it is sufficient  to ensure that $\vec U_{j,k} + \frac{\Delta t} {1-\beta}   {\cal L} (\vec U;{j,k} )   \in {\mathcal G} $ and $\vec U_{j,k} + \frac{\Delta t}{\beta} \vec  S(\vec U_{j,k} , r_j) \in {\mathcal G}$ for each $j,k$
 when $\vec U_{j,k} \in {\mathcal G}$ for all $j,k$. The first part is true if
 the condition \eqref{eq:CFL2D} is replaced with
\begin{equation}
\label{Timestepsize2a}
\Delta t \le  \frac{{\left( {1 - \beta } \right) \hat w}}{{2(\tau _1  + \tau _2 )}},
\end{equation}
while the second part may be ensured if
\begin{equation}
\label{Timestepsize2b}
\Delta t \le \beta A_s,\quad
A_s := \mathop {\min }\limits_{\{ j,k\}  \in {\cal P}_v } \left\{\frac{{j\Delta r q(\vec U_{j,k} )}}{{\left( {p(\vec U_{j,k} ) + q(\vec U_{j,k} )} \right)|v_1 (\vec U_{j,k} )|}}\right\},
\end{equation}
where 
${\cal P}_v = \left\{ {(j,k)\left| {j,k \in \mathbb{Z}, v_1 (\vec U_{j,k} )> 0} \right.} \right\}$. The readers are referred to the following lemma or the similar
discussion in \cite{zhang2011}.
Combining \eqref{Timestepsize2a} with \eqref{Timestepsize2b},
 an optimal value of  $\beta$ is chosen as  $\hat w / ( \hat w + 2 A_s (\tau _1  + \tau _2) )$ such that
$$
\frac{{\left( {1 - \beta } \right) \hat w}}{{2(\tau _1  + \tau _2 )}} = \beta A_s.
$$

\begin{lemma}\label{thm:Source}
If $\vec U \in {\mathcal G}$,
then $\vec U + \Delta t \vec S (\vec U,r) \in {\mathcal G}$ under
\[
\xi:=\frac{{v_1 \Delta t}}{r} \le \frac{{q(\vec U)}}{{p + q(\vec U)}}.
\]
\end{lemma}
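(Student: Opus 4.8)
The plan is to write out the components of $\vec U + \Delta t\,\vec S(\vec U,r)$ explicitly and then check membership in ${\mathcal G}$ through the practical characterization ${\mathcal G}={\mathcal G}_1$ of Lemma \ref{lam:equDef}, i.e.\ by verifying positivity of the first component and of $q(\cdot)$. The structural fact that makes this painless is the identity $m_1 = (E+p)v_1$ (equivalently $v_1 = m_1/(E+p)$, cf.\ \eqref{eq:solveVRHO}): it lets one replace the awkward last entry of $\vec S$ and shows that, with $\xi := v_1\Delta t/r$,
\[
\vec U + \Delta t\,\vec S(\vec U,r)
= \big( (1-\xi)D,\ (1-\xi)m_1,\ (1-\xi)m_2,\ E - \xi(E+p) \big)^T .
\]
Thus $D$ and the whole momentum vector are scaled by a common factor $1-\xi$, and $E$ only acquires the correction $-\xi(E+p)$.

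First I would verify $1-\xi>0$. If $v_1\le 0$ this is immediate since then $\xi\le 0$; if $v_1>0$, the hypothesis gives $\xi\le q(\vec U)/(p+q(\vec U))<1$, using $p>0$ and $q(\vec U)>0$ (which hold because $\vec U\in{\mathcal G}={\mathcal G}_1$). Consequently the first component $(1-\xi)D$ is positive.

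Next I would compute $q$ of the updated state. Because $1-\xi>0$,
\[
\sqrt{\big((1-\xi)D\big)^2+\big((1-\xi)m_1\big)^2+\big((1-\xi)m_2\big)^2}=(1-\xi)\sqrt{D^2+m^2},
\]
so, using $E-\sqrt{D^2+m^2}=q(\vec U)$,
\[
q\big(\vec U + \Delta t\,\vec S(\vec U,r)\big)
= E-\xi(E+p)-(1-\xi)\sqrt{D^2+m^2}
= q(\vec U)-\xi\big(p+q(\vec U)\big).
\]
The assumed bound $\xi\le q(\vec U)/(p+q(\vec U))$ is precisely what forces the right-hand side to be nonnegative, and strictly positive whenever the bound is strict (which is the case wherever the lemma is applied, since there $\beta<1$ makes $\xi<q/(p+q)$). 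Hence $\vec U + \Delta t\,\vec S(\vec U,r)\in{\mathcal G}_1={\mathcal G}$ by Lemma \ref{lam:equDef}.

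I expect the only genuine (if small) obstacle to be the algebraic reduction at the start: one has to spot the identity $m_1=(E+p)v_1$ to simplify the energy update, and then keep track of the sign of $1-\xi$ so that the square root factors without an absolute-value issue. After that the argument is a one-line substitution, needing neither the convexity of ${\mathcal G}$ nor any eigenvalue estimate, unlike the proof of Lemma \ref{lam:propertyG}.
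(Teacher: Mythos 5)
Your proposal is correct and follows essentially the same route as the paper: rewrite the updated state as $\big((1-\xi)D,(1-\xi)m_1,(1-\xi)m_2,E-\xi(E+p)\big)^T$ via $m_1=(E+p)v_1$, and check membership in ${\mathcal G}_1$ by positivity of the first component and of $q$, which reduces to $q(\vec U)-\xi\big(p+q(\vec U)\big)>0$. Your explicit identity $q(\vec U+\Delta t\,\vec S)=q(\vec U)-\xi\big(p+q(\vec U)\big)$ and your remark about the borderline case $\xi=q/(p+q)$ (where the paper, too, really uses strict inequality) are just slightly more explicit versions of the same argument.
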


\begin{proof}
Assumption that $\vec U \in {\mathcal G}$ implies
$q(\vec U)>0$, $D>0$, and $p>0$.
Thus if $\xi<1$, then
$D(1 - \xi )>0$.
Hence, when $\xi<1$, to ensure
$$
\vec U + \Delta t \vec S (\vec U,r) = \left( {D(1 - \xi ),m_1 (1 - \xi ),m_2 (1 - \xi ),E - (E + p)\xi } \right)^T \in {\mathcal G},
$$
 it is sufficient to have
$$
E - (E + p)\xi > \sqrt{ (D(1 - \xi ))^2 + (m_1 (1 - \xi ))^2 + (m_2 (1 - \xi ))^2 } = (1 - \xi ) \sqrt{ D^2 +  m^2 },
$$
that is
$$q(\vec U)>\xi (p+q(\vec U)). $$
Therefore, if
\[
\xi  <  \frac{{q(\vec U)}}{{p + q(\vec U)}}<1,
\]
then
$\vec U + \Delta t \vec S (\vec U,r)\in {\mathcal G}$.
The proof is completed. \qed
\end{proof}






\section{Numerical experiments}
\label{sec:experiments}


This section conducts some numerical experiments on several ultra-relativistic RHD problems with large Lorentz factor, or strong discontinuities, or low
rest-mass density or pressure etc.
 to verify the accuracy, robustness and effectiveness of the proposed high-order accurate
 physical-constraints-preserving finite difference WENO  schemes.
 It is worth stressing that those ultra-relativistic RHD problems  seriously challenge the numerical schemes.
 To limit  the length of the paper, this section only presents the numerical results obtained by our fifth- and
 ninth-order accurate schemes with the third-order accurate Runge-Kutta time discretization \eqref{eq:RK1},
and for convenience, abbreviate them as ``{\tt PCPFDWENO5}'' and ``{\tt PCPFDWENO9}'', respectively.
Unless otherwise stated, all the computations are restricted to the equation of state \eqref{gamma-law} with
the adiabatic index $\Gamma = 5/3$, and the parameter $\hat w$  in \eqref{eq:CFL-PCP} or
\eqref{eq:CFL2D}
is taken as 0.45 for {\tt PCPFDWENO5} and 0.4 for {\tt PCPFDWENO9}.


\begin{example}[1D Smooth problem] \label{example1Dsmooth}\rm
	This test  is used  to check the accuracy of our schemes, and
	similar to but more ultra than the one simulated in \cite{YangHeTang2011}.
	The initial data for the 1D RHD equations  \eqref{eq:1D} are taken as
$$
	\vec V(x,0)=\big( 1+0.99999\sin(x), 0.99, 0.005\big)^T,
	\quad x\in [0,2\pi),
$$
and  thus the exact solution can be given as follows
$$\vec V(x,t)=\big( 1+0.99999\sin(x-0.99t), 0.99, 0.005\big)^T, \quad x\in[0,2\pi),\ \ t\geq 0.
$$
It describes a RHD sine wave propagating periodically and quickly  in the interval $[0,2\pi)$.

The computational domain  is divided into $N_i$ uniform cells, $i=1,2, \cdots,{\hat i}$,
where $\hat i$  is taken as 6 for {\tt PCPFDWENO5} and 7 for {\tt PCPFDWENO9}.
Here the periodic boundary conditions are specified at the end points $x=0$ and $2\pi$.
 The time stepsize is taken as ${\Delta t} = (0.5\Delta x)^{\frac{5}{3}}$  for  {\tt PCPFDWENO5} and  $(0.5\Delta x)^{\frac{9}{3}}$
 for {\tt PCPFDWENO9} in order to realize high-order accuracy in
 time in the present case.

Tables \ref{tab:1DaccuracyWENO5} and \ref{tab:1DaccuracyWENO9}
list  $l^1$- and $l^\infty$-errors at $t=0.01$ and  corresponding orders
obtained  by using {\tt PCPFDWENO5} and {\tt PCPFDWENO9}, respectively,
 where the order is calculated by $  {- \ln (  {\mbox{error}}_{i} /  {\mbox{error}}_{{i+1}} ) } /{ \ln ( N_i / N_{i+1} )  }$, and
$ {\mbox{error}}_{i}$ denotes  the error estimated on  the mesh of $N_i$ uniform cell.
For comparison,  the errors and convergence rates are listed there for
corresponding  finite difference WENO  schemes without physical-constraints-preserving limiter.
The results show that the theoretical order  may be obtained by both {\tt PCPFDWENO5} and {\tt PCPFDWENO9} and
the physical-constraints-preserving limiter does destroy the accuracy.

\begin{table}[htbp]
  \centering 
    \caption{\small Example \ref{example1Dsmooth}: Numerical  errors  and orders in $l^1$- and $l^\infty$-norms   at $t=0.01$ for  {\tt PCPFDWENO5} and corresponding {\tt WENO5}  without physical-constraints-preserving limiter.
  }
\begin{tabular}{|c||c|c|c|c||c|c|c|c|}
  \hline
\multirow{2}{8pt}{$N_i$}
 &\multicolumn{4}{c||}{\tt WENO5}&\multicolumn{4}{c|}  {\tt PCPFDWENO5}\\
 \cline{2-9}
 &$l^1$ $ {\mbox{error}}$ &$l^1$ order &$l^\infty$ error &$l^\infty$ order  &$l^1$ error &$l^1$ order &$l^\infty$ error &$l^\infty$ order \\
 \hline
8 &1.8713e-3& --     & 4.4614e-4   &--    &1.8713e-3& --    &4.4614e-4 &--\\
16&6.7642e-5&4.79  & 1.5495e-5   &4.85   &6.7642e-5& 4.79&1.5495e-5 &4.85\\
32&1.8277e-6&5.21  &5.1420e-7    &4.91  &1.8277e-6& 5.21&5.1420e-7 &4.91\\
64&5.1951e-8&5.14  &1.6019e-8    &5.00 &5.1951e-8& 5.14&1.6019e-8 &5.00\\
128&1.5403e-9&5.08 &4.9554e-10  &5.01 &1.5403e-9& 5.08&4.9554e-10 &5.01\\
256&4.6747e-11&5.04&1.5215e-11  &5.03 &4.6746e-11& 5.04&1.5102e-11 &5.04\\
\hline
\end{tabular}\label{tab:1DaccuracyWENO5}
\end{table}

\begin{table}[htbp]
  \centering
    \caption{\small
  Same as Table \ref{tab:1DaccuracyWENO5}, except for  {\tt PCPFDWENO9}. }
\begin{tabular}{|c||c|c|c|c||c|c|c|c|}
  \hline
\multirow{2}{8pt}{$N_i$}
 &\multicolumn{4}{c||}{\tt WENO9}&\multicolumn{4}{c|}  {\tt PCPFDWENO9}\\
 \cline{2-9}
 &$l^1$ error &$l^1$ order &$l^\infty$ error &$l^\infty$ order  &$l^1$ error &$l^1$ order &$l^\infty$ error &$l^\infty$ order \\
 \hline
8 &1.2614e-4& --     & 3.0905e-5   &--    &1.2614e-4& --     & 3.0905e-5   &--\\
16&2.2845e-7& 9.11  & 8.5647e-8   & 8.50   &2.2845e-7& 9.11  & 8.5647e-8   & 8.50 \\
24&5.0564e-9& 9.40  &2.3436e-9    & 8.88   &5.0564e-9& 9.40  &2.3436e-9    & 8.88\\
32&3.4424e-10& 9.34  &1.7915e-10    & 8.94 &3.4422e-10& 9.34  &1.7915e-10    & 8.94\\
40&4.3114e-11& 9.31 &2.4253e-11  & 8.96    &4.3155e-11& 9.31 &2.4253e-11  & 8.96 \\
48&8.1007e-12& 9.17&5.0622e-12  & 8.59      &7.9810e-12& 9.26&4.7192e-12  & 8.98\\
56&1.9977e-12& 9.08 &1.1805e-12  & 9.44     &1.9005e-12& 9.31 &1.1804e-12  & 8.99\\
\hline
\end{tabular}\label{tab:1DaccuracyWENO9}
\end{table}

\end{example}


\begin{example}[1D Riemann problem] \label{example1DRiemann}\rm
The second test is a Riemann problem (RP) for
the 1D RHD equations  \eqref{eq:1D}  
with initial data
  \begin{equation}
    \label{eq:1DRiemann}
    \vec V(x,0)=
    \begin{cases}
      (1,0,10^4)^T,\ \ & x < 0.5,\\
      (1,0,10^{-8})^T,\ \ & x>0.5.
    \end{cases}
  \end{equation}
The initial discontinuity will evolve as a strong left-moving rarefaction wave,
 a quickly right-moving contact discontinuity,  and a quickly right-moving shock wave.
 The flow pattern  is similar to  Example 4.2 of \cite{YangHeTang2011},
 but more extreme and difficult because of the appearance of the ultra-relativistic region.
In the present case,  the speeds of  the  contact discontinuity and the shock wave (about  0.986956 and 0.9963757 respectively) are very close to the speed of light.

\begin{figure}[htbp]
  \centering
  \subfigure[$\rho$]
  {\includegraphics[width=0.48\textwidth]{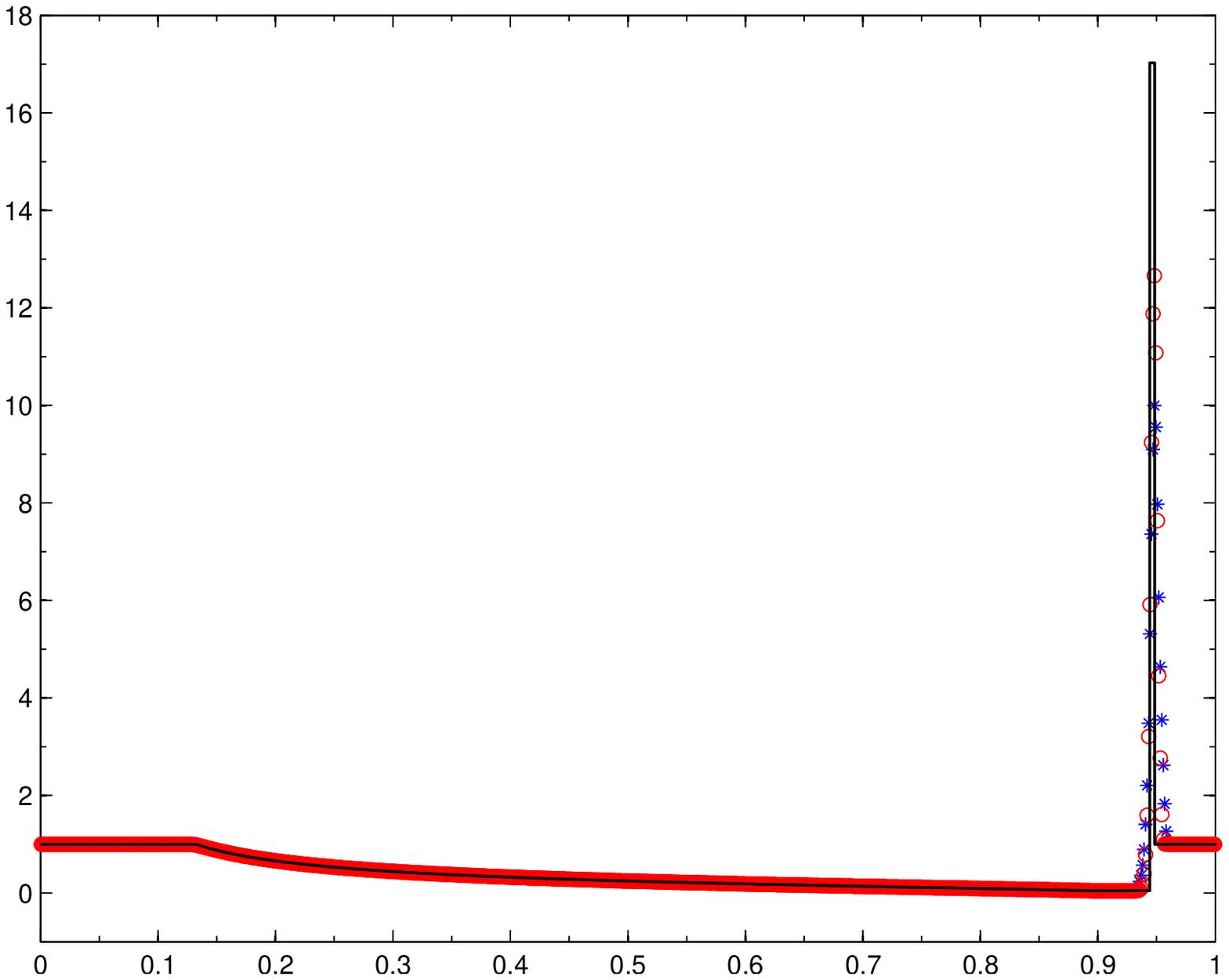}}
  \subfigure[Close-up of $\rho$]
  {\includegraphics[width=0.48\textwidth]{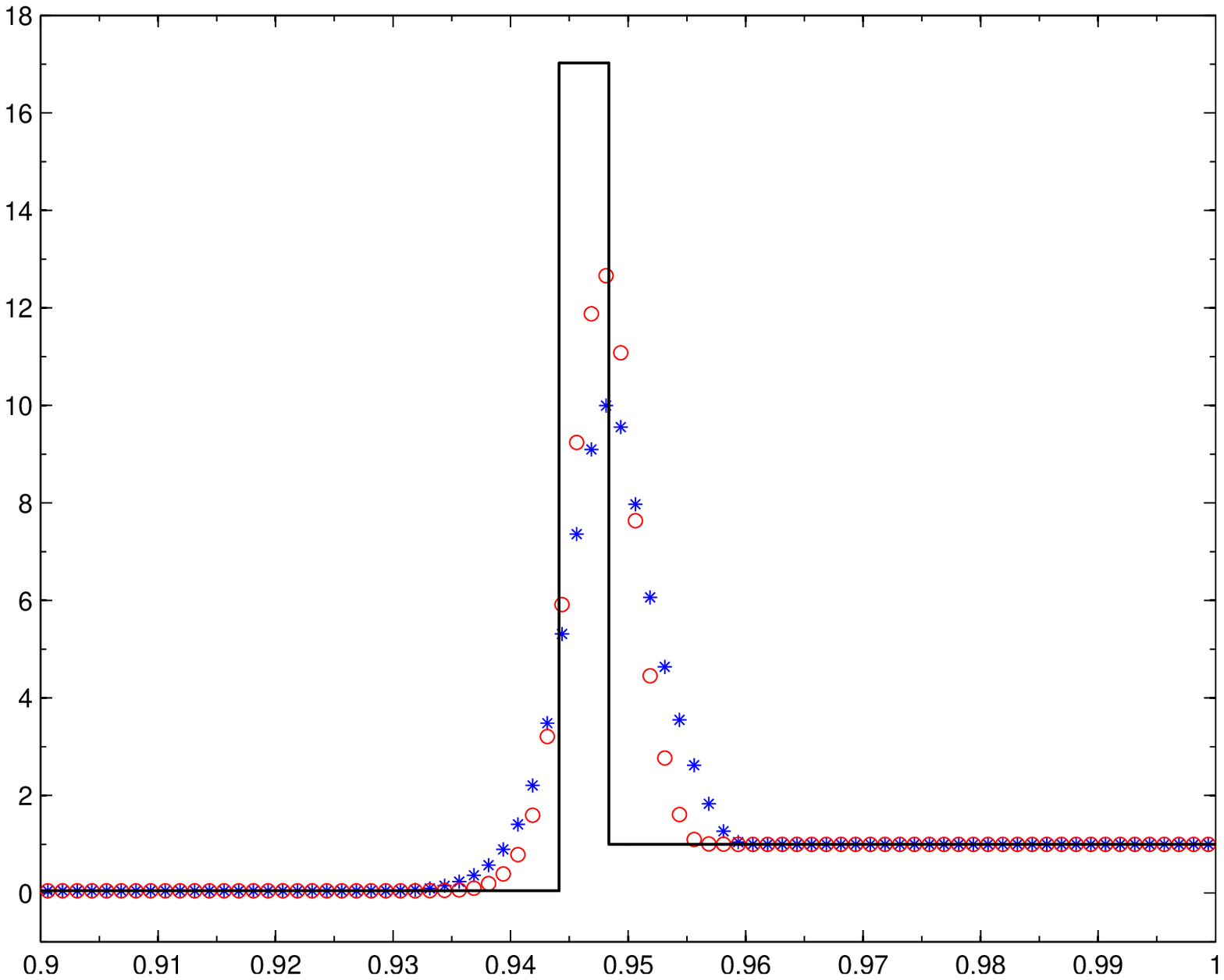}}
  \subfigure[$v_1$]
  {\includegraphics[width=0.48\textwidth]{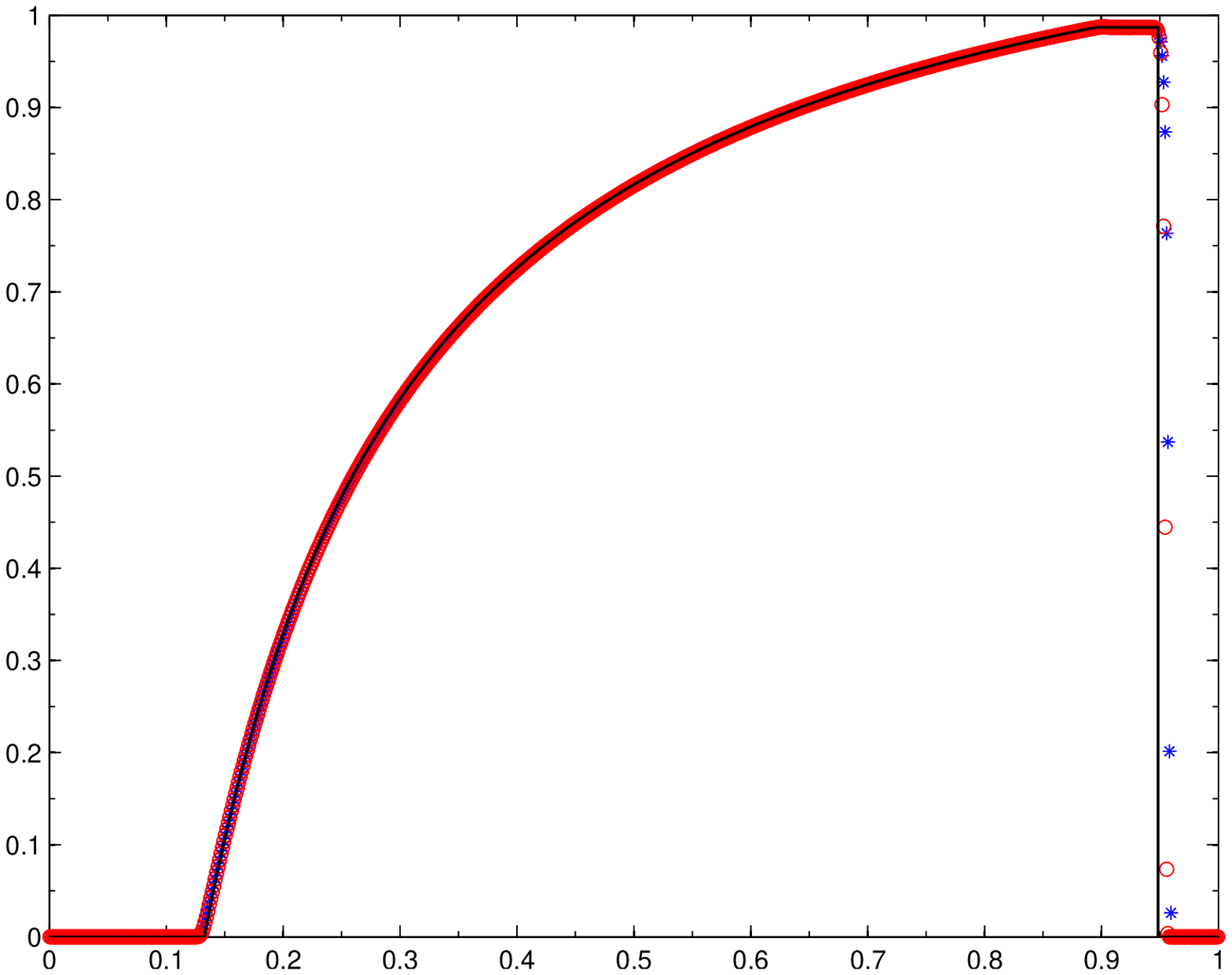}}
  \subfigure[$p$]
  {\includegraphics[width=0.48\textwidth]{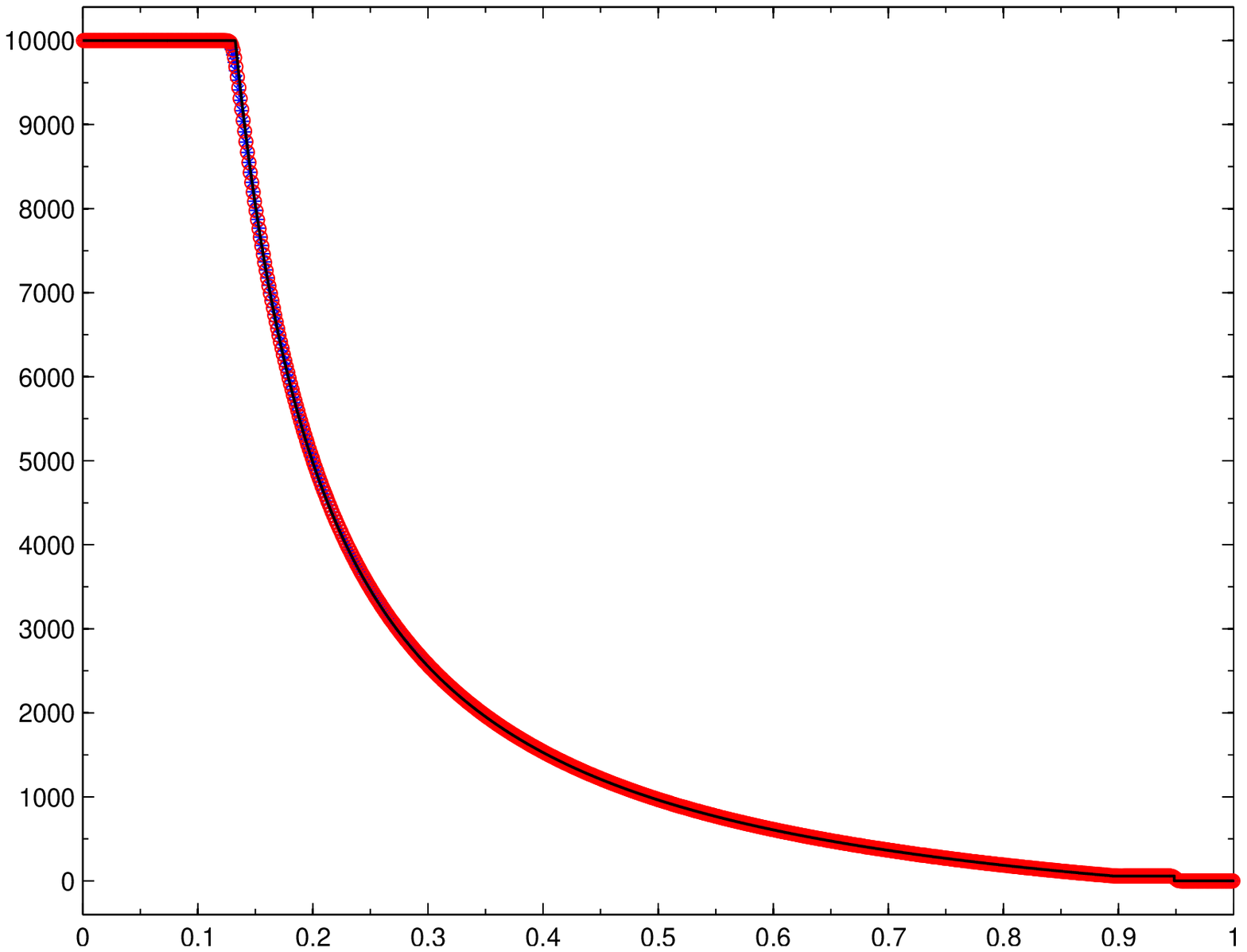}}
  \caption{\small Example \ref{example1DRiemann}: The density $\rho$ and its close-up,
  	the velocity $v_1$, and the pressure $p$ at $t=0.45$ obtained by using  {\tt PCPFDWENO5} (``{\color{blue}$\ast$}'')
  	and {\tt PCPFDWENO9} (``{\color{red}$\circ$}'')
  	with 800 uniform cells.
 }
  \label{fig:1DRiemann}
\end{figure}

Fig. \ref{fig:1DRiemann} displays the numerical results at $t=0.45$ obtained by using {\tt PCPFDWENO5} (``{\color{blue}$\ast$}'') and {\tt PCPFDWENO9} (``{\color{red}$\circ$}'') with  800 uniform cells
within the domain $[0,1]$, where the solid line denotes the exact solution.
It can be seen that  {\tt PCPFDWENO9} exhibits better resolution than  {\tt PCPFDWENO5}, and
they
can well capture the wave configuration except for the extremely narrow region between
the contact discontinuity and the shock wave. The main reason is
that  the region between the shock wave and  the  contact discontinuity is extremely narrow
(its width at $t=0.45$ is about 0.00424) so that
 it can not be well resolved with  800 uniform cells.
At the resolution of 800 uniform cells,  the maximal densities  for {\tt PCPFDWENO5} and {\tt PCPFDWENO9} within the above narrow region  are  about 58.7\% and 74.4\% of the analytic value, respectively.

\end{example}

\begin{example}[Blast wave interaction] \label{exampleBW}\rm
This is an initial-boundary-value problem for the 1D RHD equations  \eqref{eq:1D} and
has been studied in \cite{Marti3,YangHeTang2011}. The same initial setup is considered here.
The adiabatic index $\Gamma$ is taken as $1.4$,
the initial data are taken as follows
\begin{equation}
    \label{eq:BlastInteract}
    \vec V(x,0) =
    \begin{cases}
      (1,0,1000)^T,\ \ & 0<x< 0.1,\\
      (1,0,0.01)^T,\ \ & 0.1<x<0.9,\\
      (1,0,100)^T,\ \ &  0.9<x<1,
    \end{cases}
 \end{equation}
 and outflow boundary conditions are specified at the two ends of the unit interval $[0,1]$.
 This is also a very severe test since it contains the most challenging one-dimensional
relativistic wave configuration, e.g., strong relativistic shock waves, and interaction between blast waves  in a narrow region, etc.

\begin{figure}[htbp]
  \centering
  \subfigure[$\rho$]
  {\includegraphics[width=0.48\textwidth]{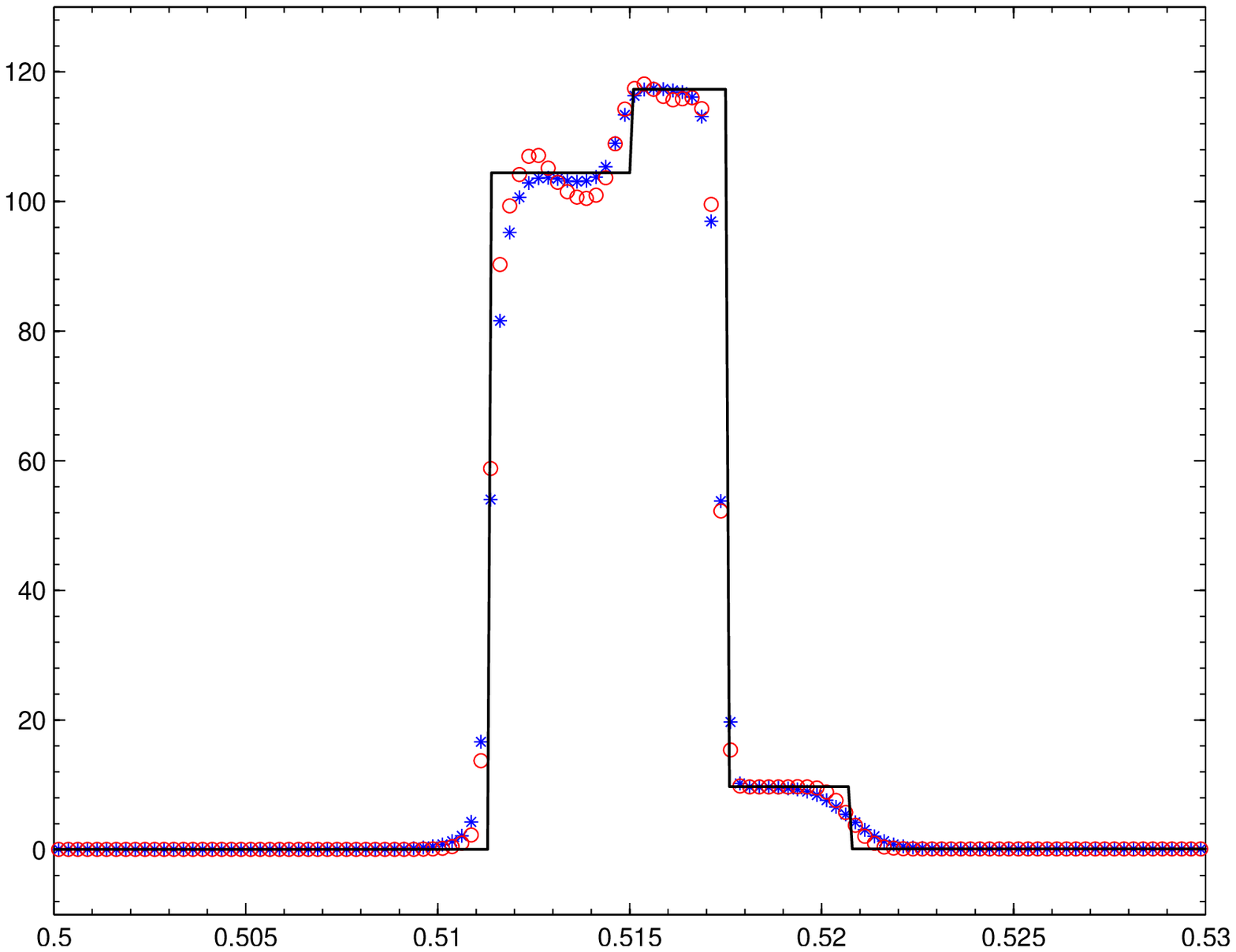}}
  \subfigure[$v_1$]
  {\includegraphics[width=0.48\textwidth]{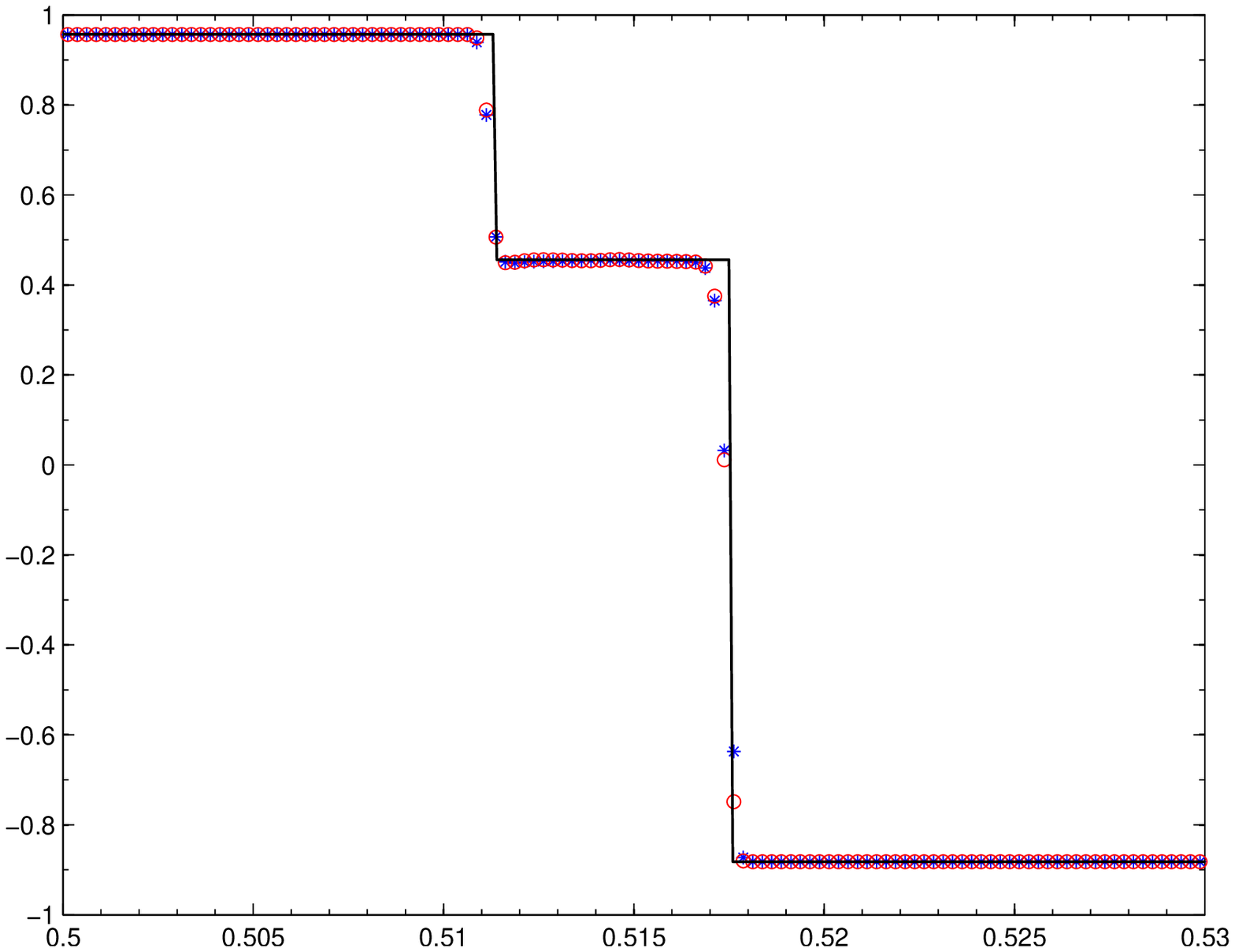}}
  \subfigure[$p$]
  {\includegraphics[width=0.48\textwidth]{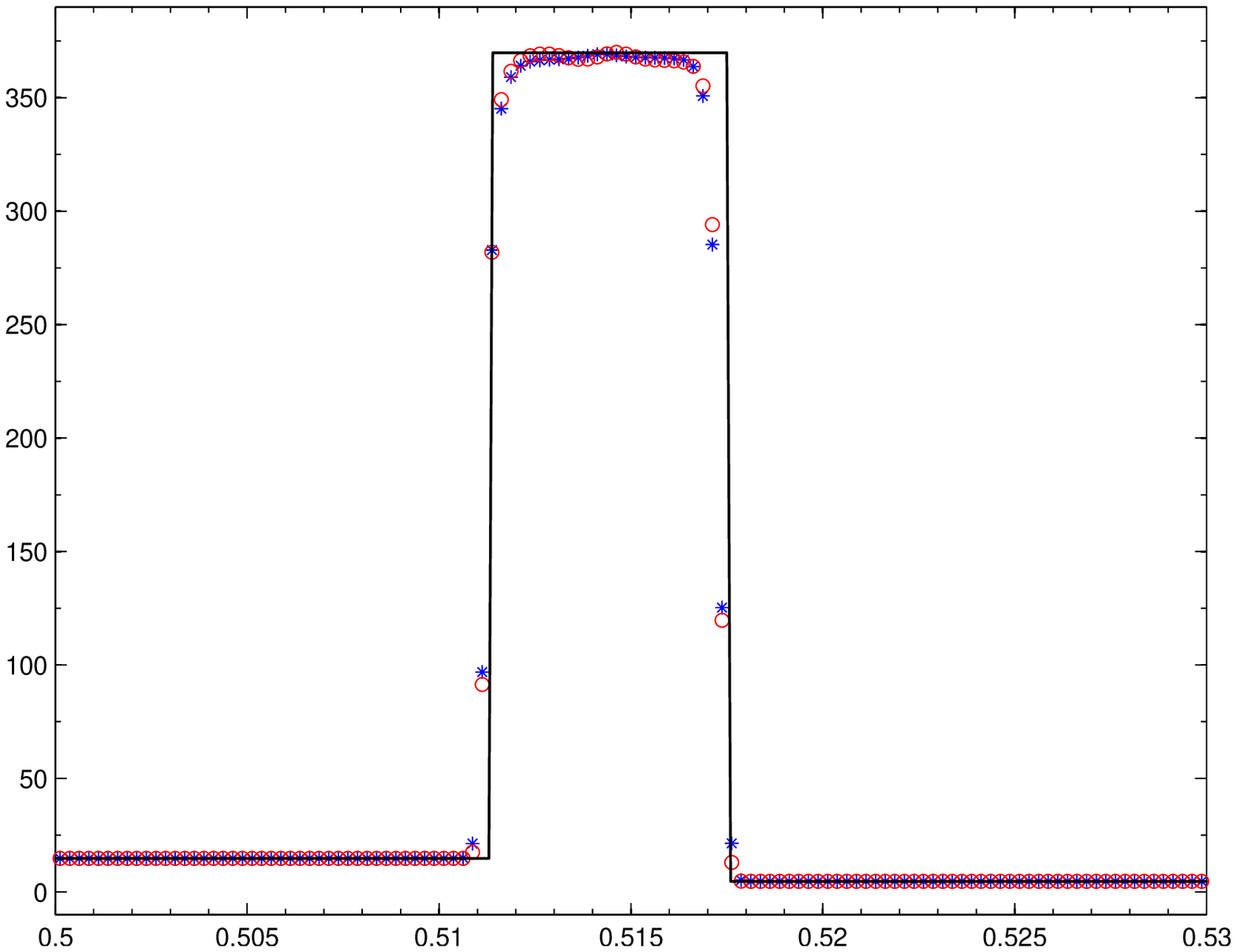}}
  \subfigure[$e$]
  {\includegraphics[width=0.48\textwidth]{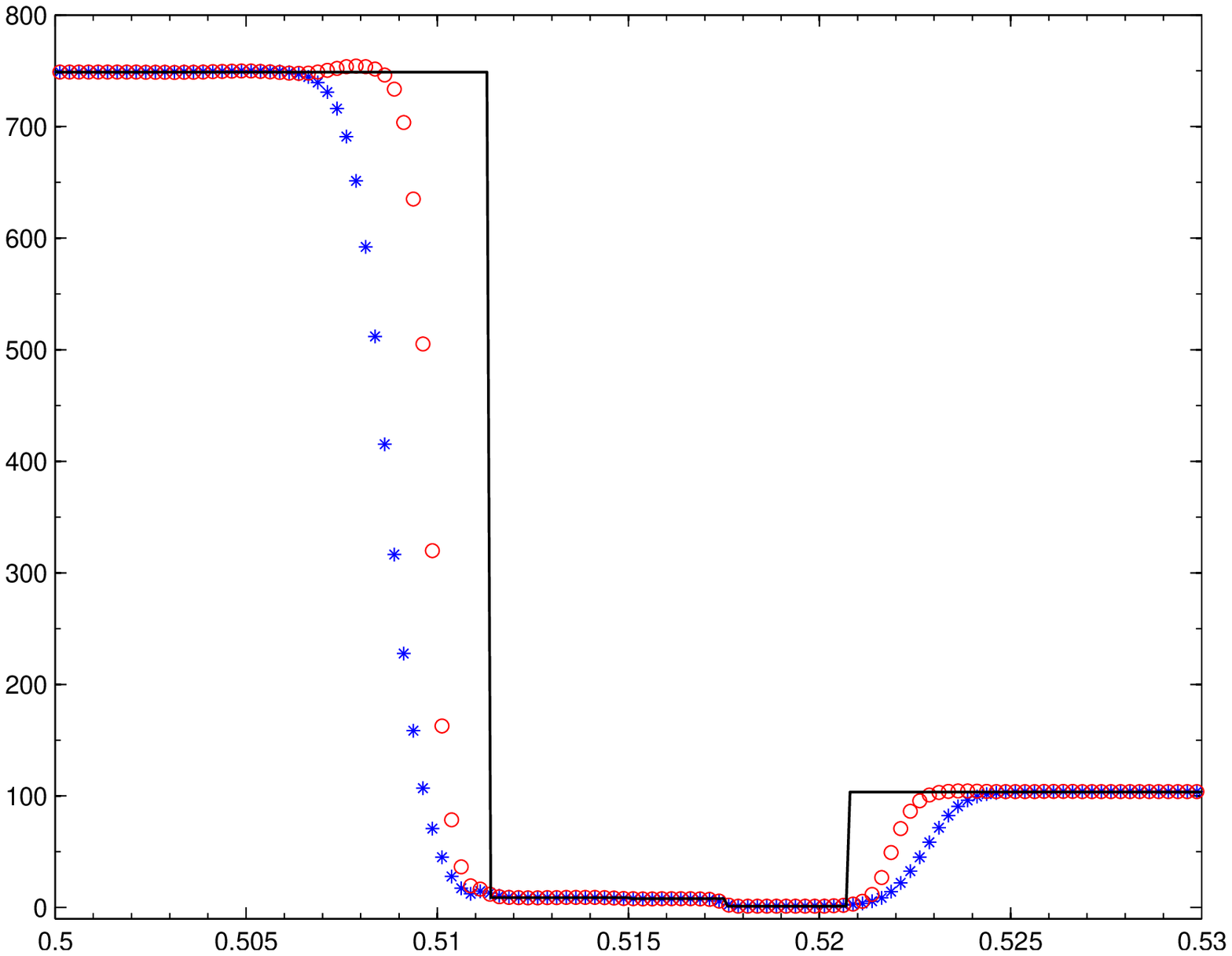}}
  \caption{\small Example \ref{exampleBW}: Close-up of the numerical solutions at $t=0.43$ obtained by using  {\tt PCPFDWENO5} (``{\color{blue}$\ast$}'')
and {\tt PCPFDWENO9} (``{\color{red}$\circ$}'') with 4000 uniform cells. The solid lines denote the exact solutions.
 }
  \label{fig:BWI}
\end{figure}

\begin{figure}[htbp]
  \centering
  \subfigure[$\rho$]
  {\includegraphics[width=0.48\textwidth]{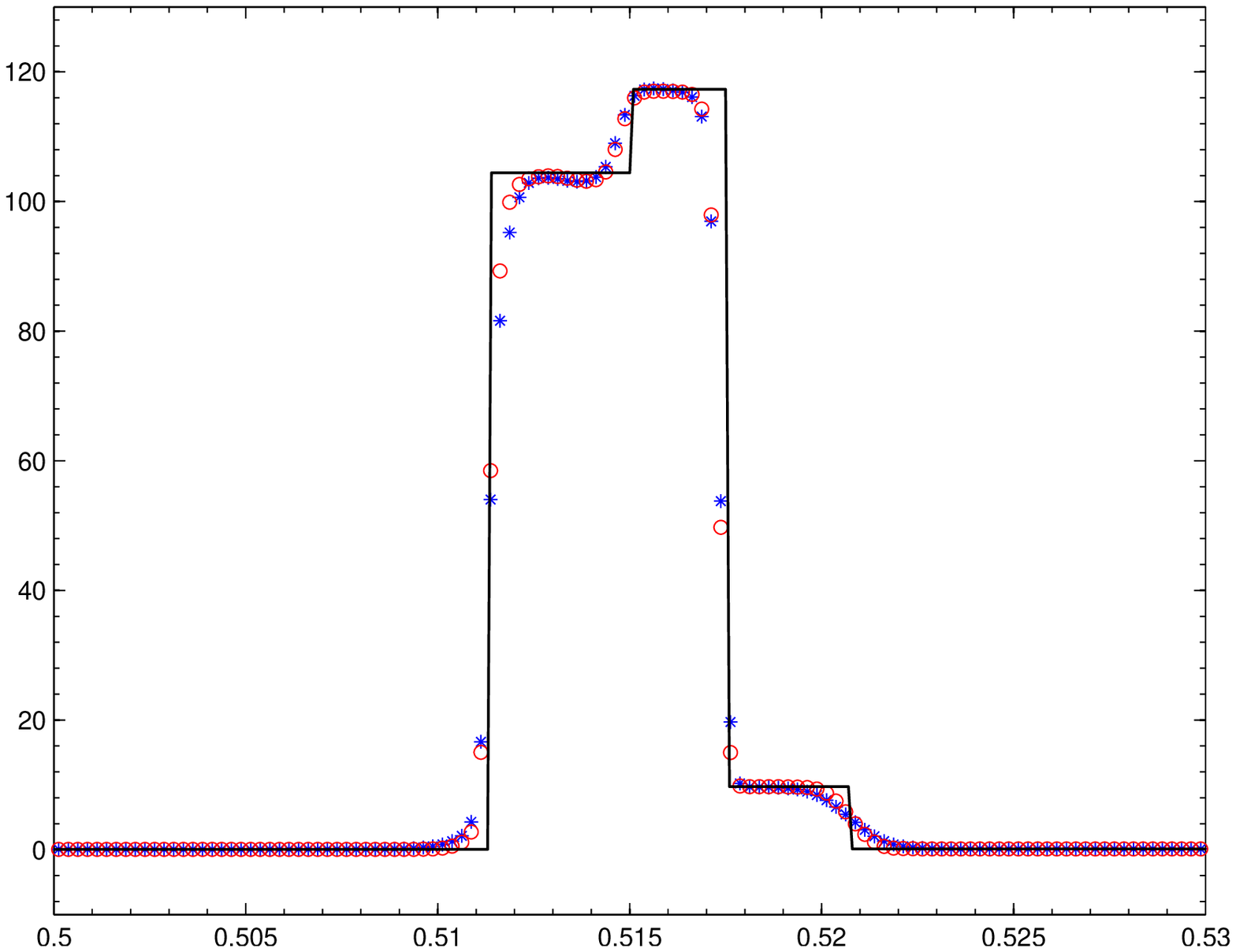}}
  \subfigure[$v_1$]
  {\includegraphics[width=0.48\textwidth]{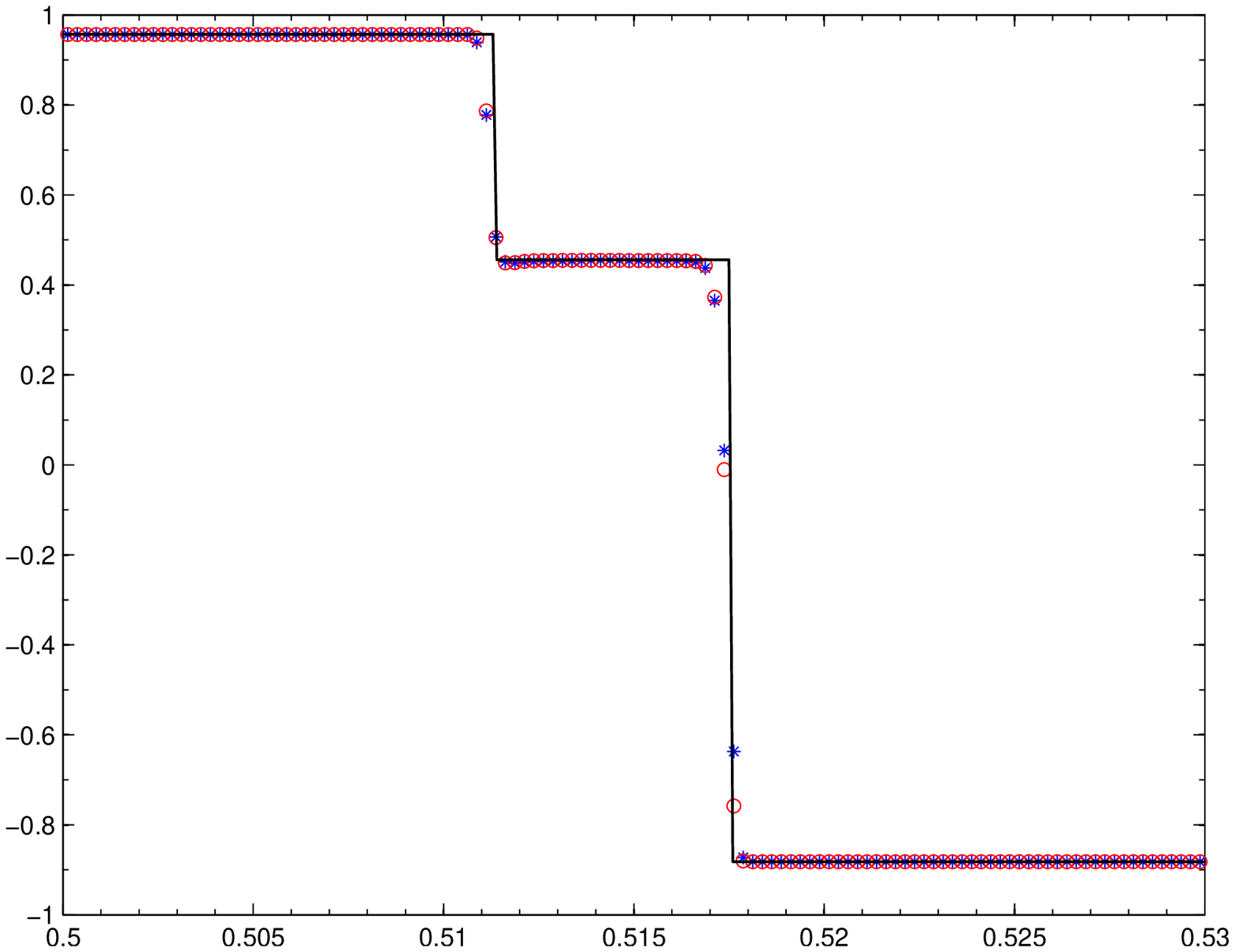}}
  \subfigure[$p$]
  {\includegraphics[width=0.48\textwidth]{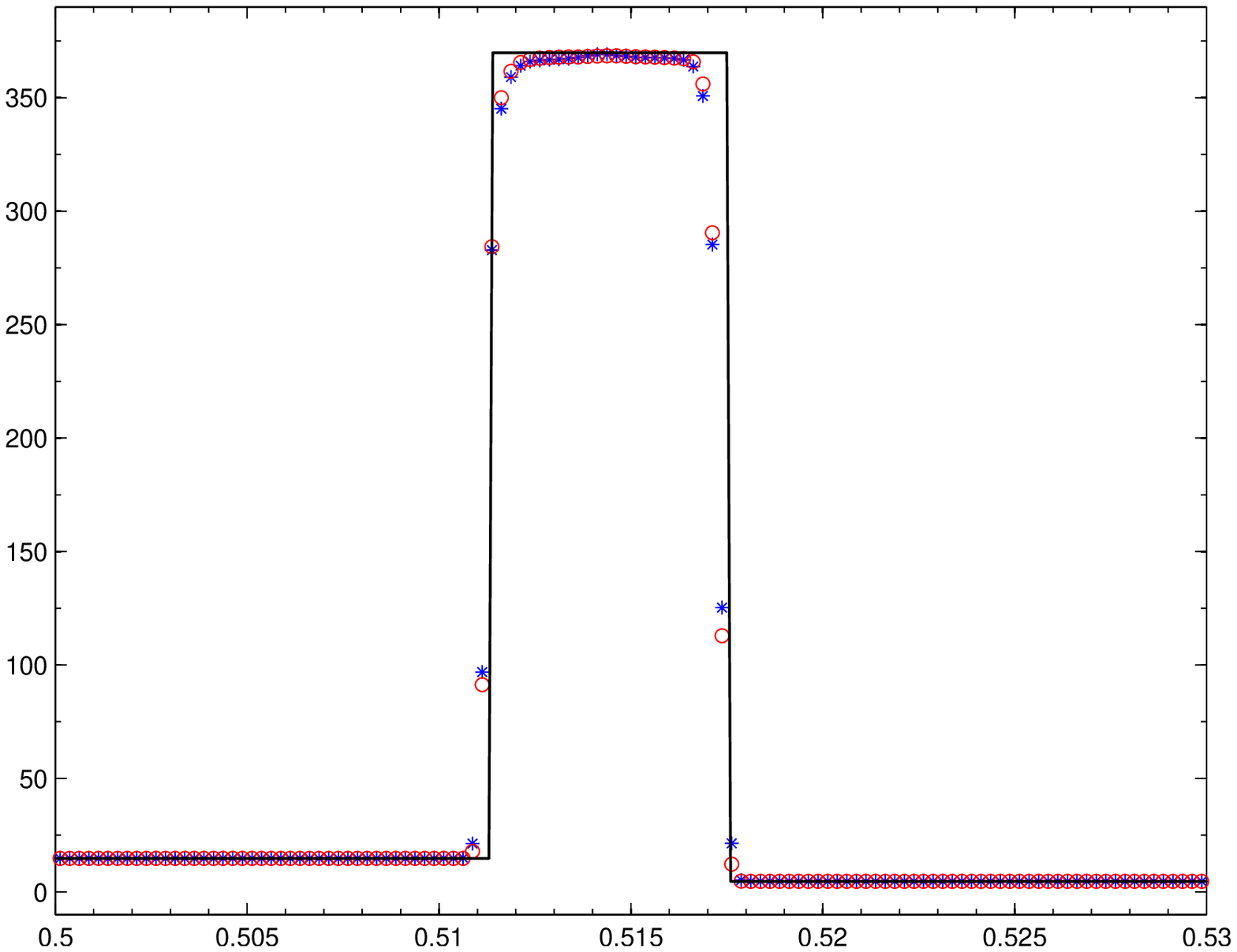}}
  \subfigure[$e$]
  {\includegraphics[width=0.48\textwidth]{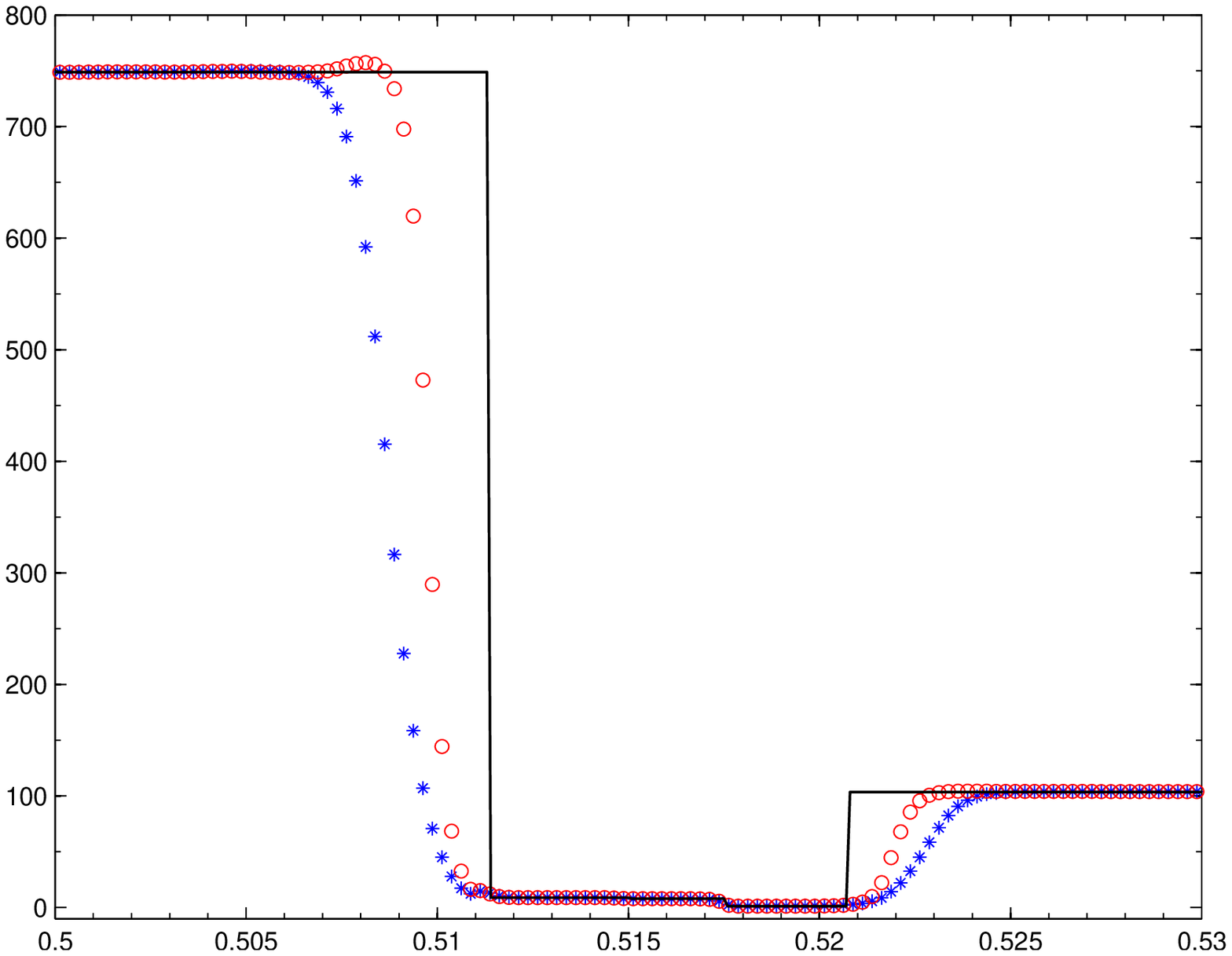}}
  \caption{\small Same as Fig. \ref{fig:BWI}, except for  {\tt PCPFDWENO9} (``{\color{red}$\circ$}'') with a monotonicity-preserving limiter.
 }
  \label{fig:BWI_MP4}
\end{figure}


Fig.~\ref{fig:BWI} gives close-up of the solutions at $t=0.43$ obtained by using {\tt PCPFDWENO5} (``{\color{blue}$\ast$}'') and {\tt PCPFDWENO9} (``{\color{red}$\circ$}'')
with 4000 uniform cells within the domain $[0,1]$.
It is found that the solutions at  $t=0.43$ within the interval $[0.5,0.53]$ consists two shock waves and two
 contact discontinuities since both initial discontinuities evolve and both blast waves collide each other;
and  compared to the GRP scheme in \cite{YangHeTang2011},
both  schemes can well resolve those discontinuities and clearly capture the complex relativistic wave configuration, but {\tt PCPFDWENO9} exhibits better resolution than {\tt PCPFDWENO5} except
 for slight overshoot and undershoot of the rest-mass density  between the left shock and the contact discontinuity.
The  overshoot and undershoot may be suppressed by using
the monotonicity-preserving limiter in \cite{balsara2000}, see Fig.~\ref{fig:BWI_MP4}.

\end{example}

\begin{example}[Shock heating problem] \label{exampleSH}\rm
The last 1D test is to solve the shock heating problem, see \cite{Blandford1976},
by using {\tt PCPFDWENO5} and {\tt PCPFDWENO9}.
The computational domain $[0,1]$ with a  reflecting boundary at the right end
is initially filled with a cold gas (the specific internal energy  is taken as 0.0001).
The gas has  an unit rest-mass density,  the adiabatic index  $\Gamma$ of $4/3$,
 and the velocity $v_0$ of $1-10^{-10}$,
When the initial gas moves toward to the reflecting boundary, the gas is compressed and
  heated as the kinetic energy is converted into the internal energy.
After then, a reflected strong shock wave is formed and propagate to the left  with the speed
$v_s = {(\Gamma -1)W_0 |v_0|}/{(W_0 + 1)}$, where  $W_0=(1-v_0^2)^{-1/2}$ is about 70710.675.
 Behind the reflected shock wave, the gas is at rest and has a specific internal energy of $W_0 - 1$  due to the energy conservation across the shock wave. The compression ratio $\sigma$ across the relativistic shock wave
$$
\sigma= \frac{\Gamma +1}{\Gamma -1} + \frac{\Gamma}{\Gamma -1} (W_0 - 1)
\approx 282845.7,
$$
grows linearly with the Lorentz factor $W_0$ and
towards to infinite as the inflowing gas velocity approaches to speed of light. It is worth noting that  the compression ratio
across the shock wave  in the non-relativistic case is always
bounded by ${(\Gamma+1)}/{(\Gamma -1)}$.



\begin{figure}[htbp]
  \centering
  \subfigure[$\rho$]
  {\includegraphics[width=0.48\textwidth]{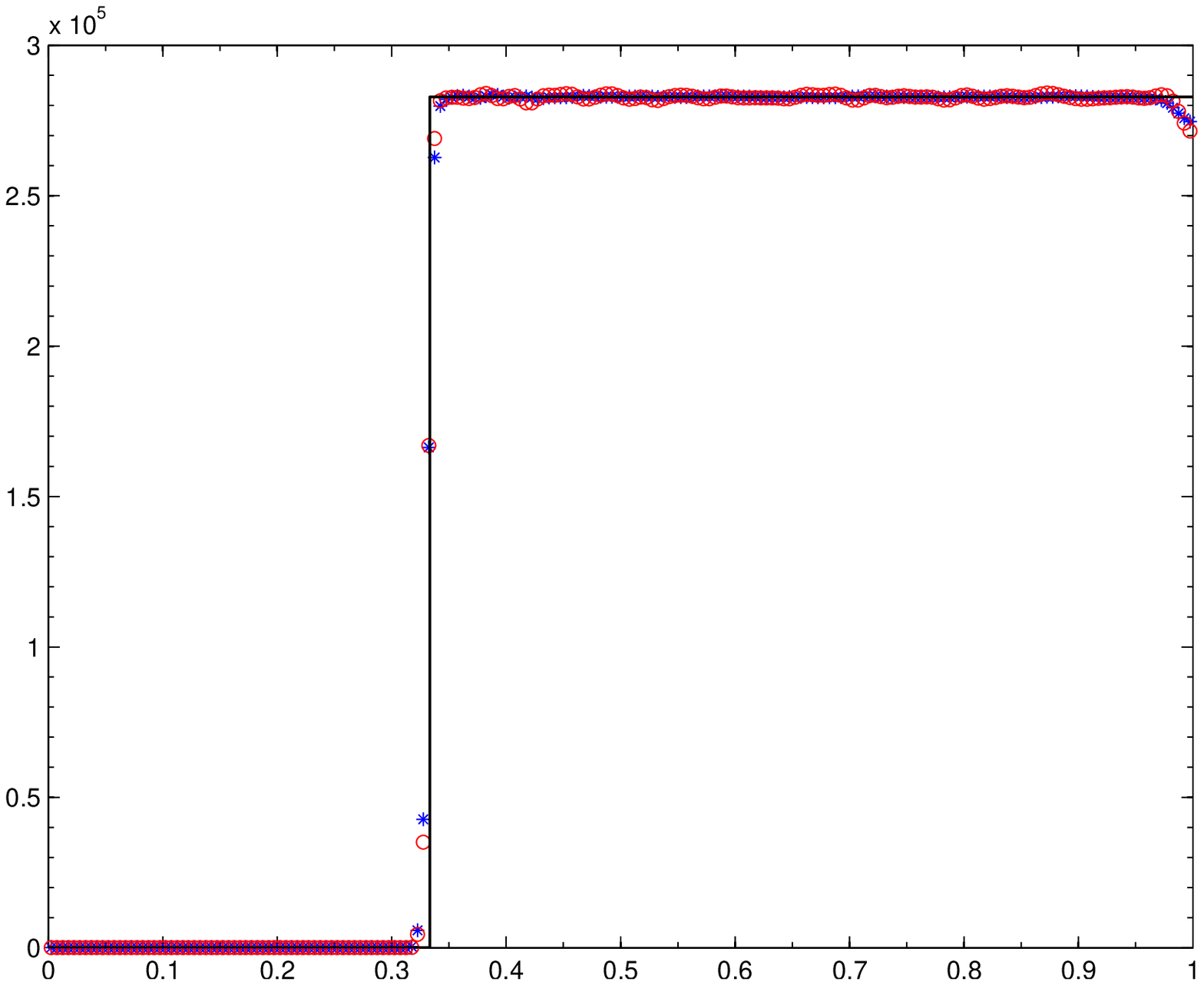}}
  \subfigure[$u$]
  {\includegraphics[width=0.48\textwidth]{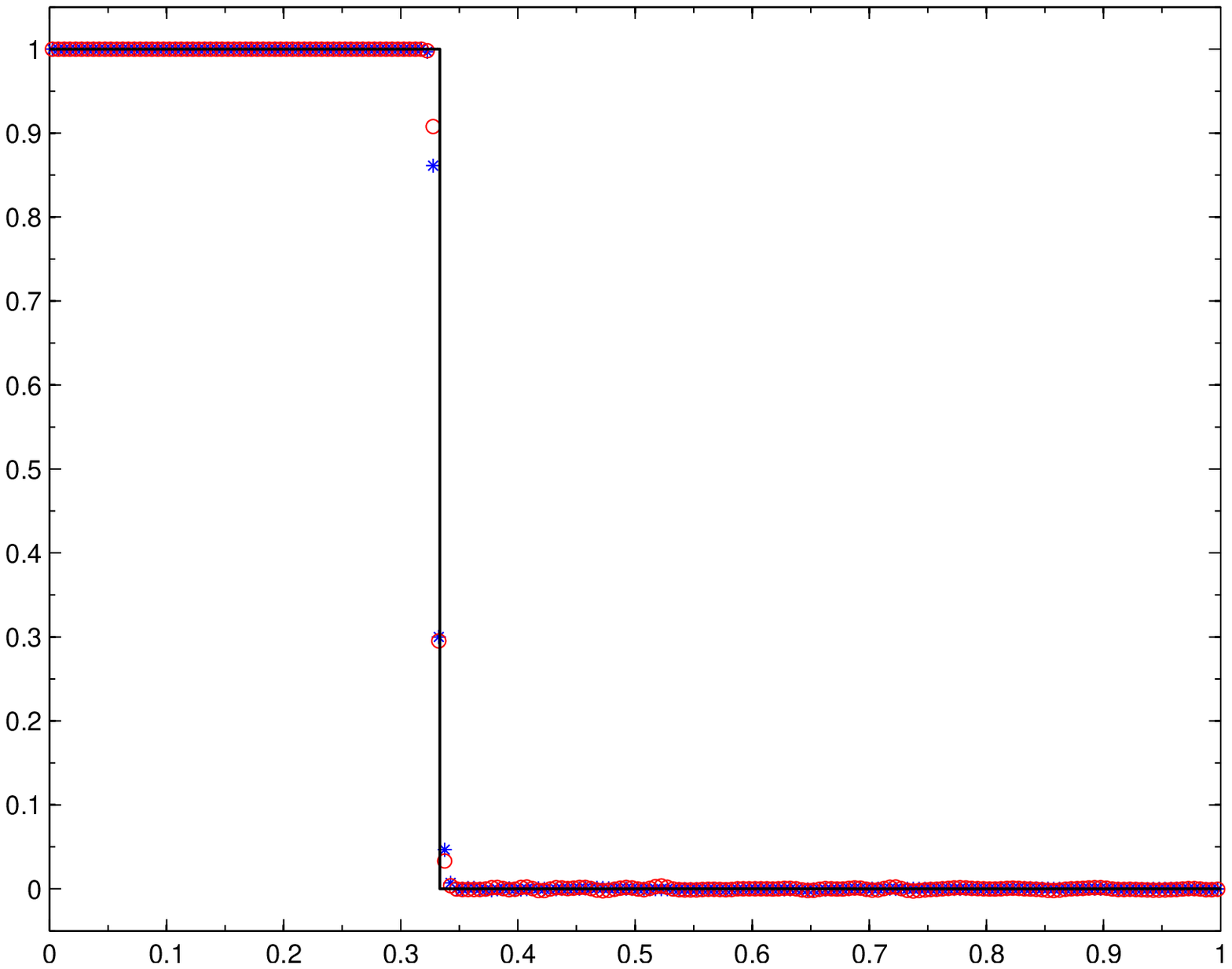}}
  \subfigure[$p$]
  {\includegraphics[width=0.48\textwidth]{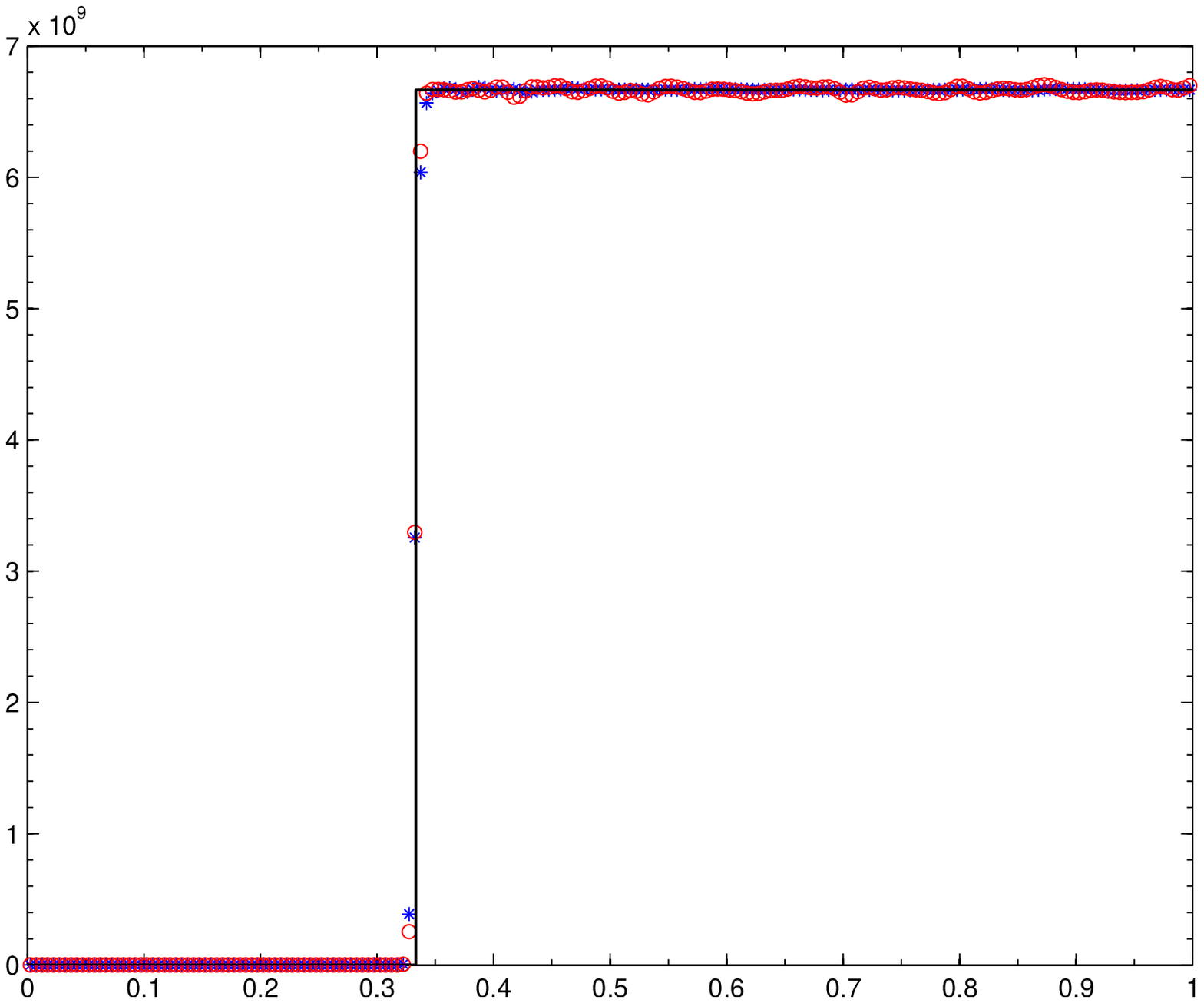}}
  \subfigure[$e$]
  {\includegraphics[width=0.48\textwidth]{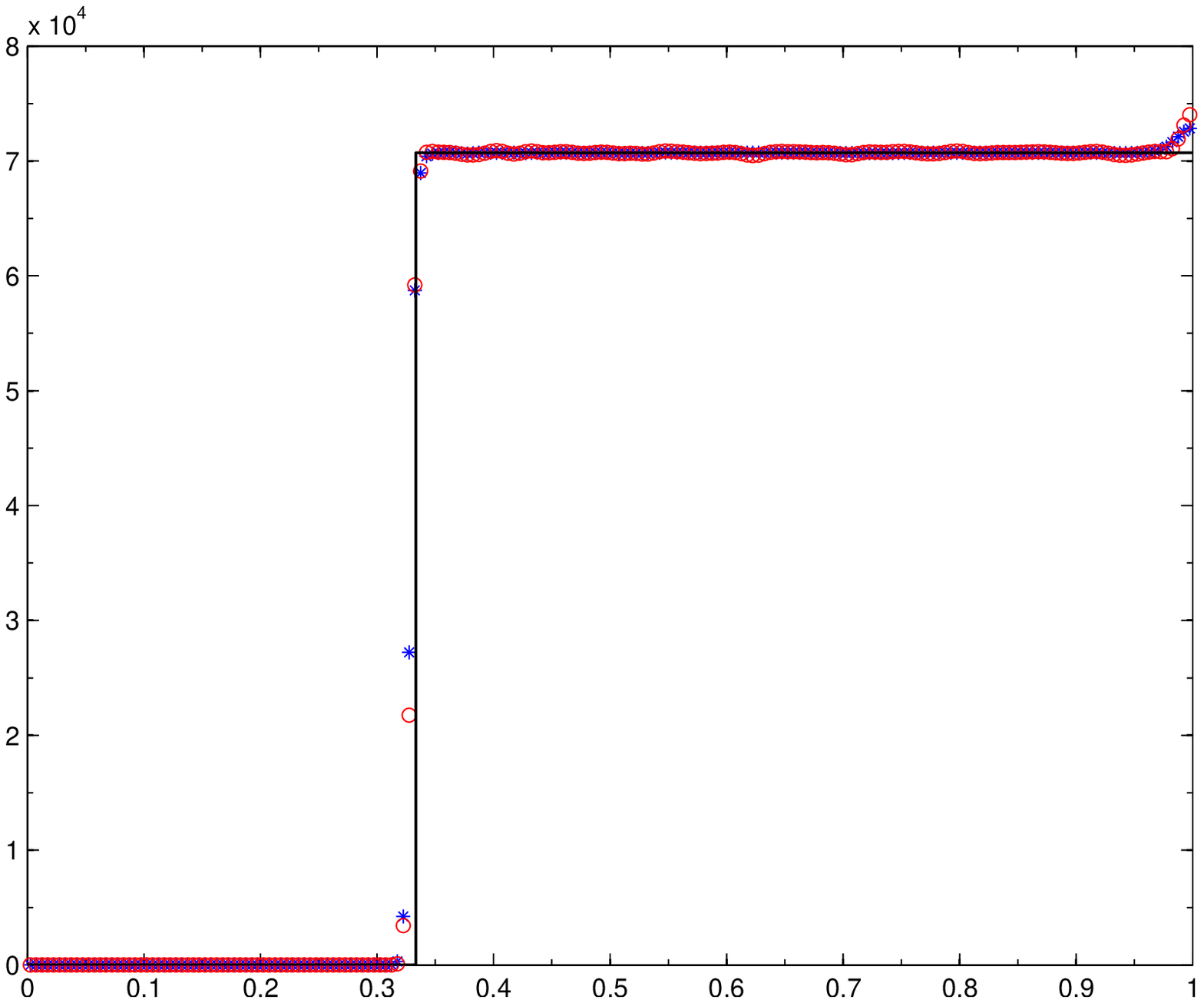}}
  \caption{\small Example \ref{exampleSH}: The density $\rho$, the velocity $v_1$,
  	the pressure $p$, and the internal energy $e$ at $t=2$ obtained by using  {\tt PCPFDWENO5} (``{\color{blue}$\ast$}'') and {\tt PCPFDWENO9} (``{\color{red}$\circ$}'')  with  200 uniform cells. The solid lines denote the exact solutions.
 }
  \label{fig:SH}
\end{figure}

Fig. \ref{fig:SH} displays
the numerical solutions at $t=2$ obtained by using  {\tt PCPFDWENO5} (``{\color{blue}$\ast$}'')
and {\tt PCPFDWENO9} (``{\color{red}$\circ$}'')  with 200 uniform cells.
It shows that both schemes exhibit good robustness for this ultra-relativistic problem
and  good resolution for the strong shock wave, even though there exist slight oscillations
in the rest-mass density and the internal energy behind the shock wave
as well as well-known wall-heating phenomenon near the reflecting boundary $x=1$.
Similar to Example \ref{exampleBW},
those small oscillations can also be efficiently alleviated by using
the monotonicity-preserving limiter \cite{balsara2000}.  To save the paper space, the results are not presented here.

\end{example}

	Besides the flux limiter  in Section \ref{sec:reviweWENO}, 
	we have also tried to  extend  the parametrized flux limiter \cite{Xu_MC2013,Liang2014,XiongQiuXu2014}
	to the 1D RHD equations \eqref{eq:1D},
	the numerical results show that the parametrized flux limiter is less restrictive
	on the CFL number in preserving high order accuracy and has slightly better resolution for Example \ref{example1DRiemann}	than the flux limiter in Section \ref{sec:reviweWENO}.


\begin{example}[2D Riemann problem] \label{example2DRPs}\rm
The non-relativistic 2D RPs are theoretically studied
for the first time in \cite{ZhangZheng1990}. After then, the 2D RPs  become benchmark tests for verifying the accuracy and resolution of numerical schemes, see \cite{SchulzRinne1993,LaxLiu1998,HanLiTang2011,WuYangTang2014b,WuTang2014,YangTang2012}.

Initial data of two RPs of 2D RHD equations  \eqref{eq:2D} considered here
comprise four different constant states in the unit square $\Omega=[0,1]\times[0,1]$, while initial discontinuities parallel to
both coordinate axes respectively.

The initial data of the first RP
 are
$$\vec V({x},{y},0)=
\begin{cases}(0.1,0,0,0.01)^T,& x>0.5,y>0.5,\\
  (0.1,0.99,0,1)^T,&    x<0.5,y>0.5,\\
  (0.5,0,0,1)^T,&      x<0.5,y<0.5,\\
  (0.1,0,0.99,1)^T,&    x>0.5,y<0.5,
  \end{cases}$$
where both the left and bottom discontinuities are contact discontinuities with a jump in the transverse velocity, while both the right and top discontinuities are not  simple waves. Note that this test is different from the case in \cite{YangTang2012}.

Fig.~\ref{fig:2DRP1} gives the contours of the density logarithm $\ln \rho$ at time $t = 0.4$ obtained by using
{\tt PCPFDWENO5} and {\tt PCPFDWENO9}
with several different mesh resolutions.
It is found that four initial discontinuities interact each other
and form two reflected curved shock waves, an elongated jet-like spike,
which is approximately between two points (0.7,0.7) and (0.9,0.9) on the diagonal
$x=y$ when $t = 0.4$,
and a complex mushroom structure starting from the point (0.5,0.5) respectively and  expanding to the bottom-left region;
both {\tt PCPFDWENO5} and {\tt PCPFDWENO9} well capture these complex
wave configuration, and
 it is obvious that with the same mesh of $400 \times 400$ uniform cells, {\tt PCPFDWENO9}  gets better resolution of
 discontinuities than {\tt PCPFDWENO5},  and the solutions obtained by
 {\tt PCPFDWENO9} with the mesh of $800 \times 800$ uniform cells are comparable  to those obtained by {\tt PCPFDWENO5} with a finer mesh
 of $1200 \times 1200$ uniform cells.
For a further comparison,
the rest-mass densities are plotted along the line $y=x$ and $y=1$, see
Fig. \ref{fig:2DRP1_Comparison}. Those plots  validate the above observation.

\begin{figure}[htbp]
  \centering
  \subfigure[${\tt PCPFDWENO5}$ with $400 \times 400$ uniform cells]
  {\includegraphics[width=0.48\textwidth]{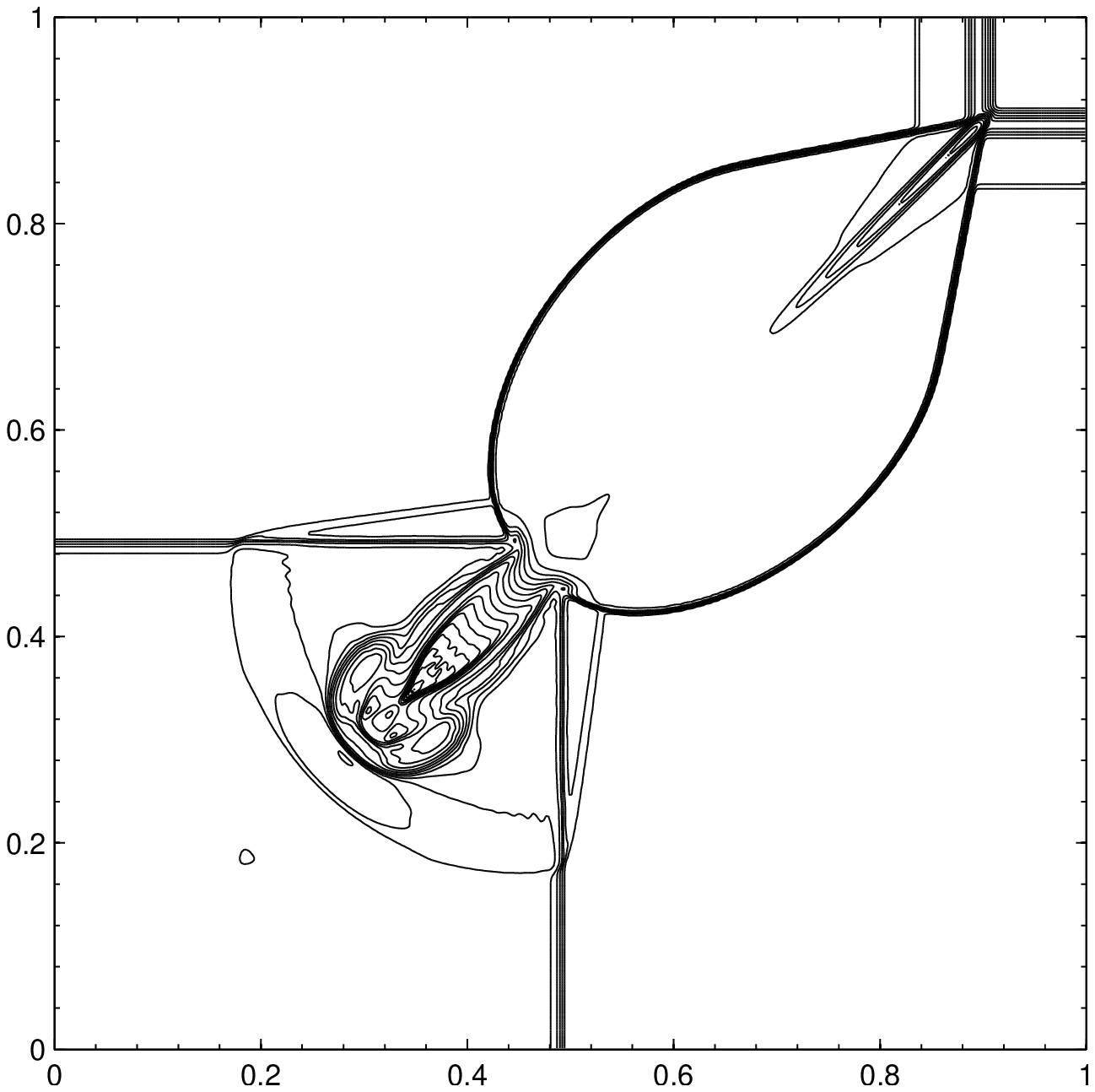}}
  \subfigure[${\tt PCPFDWENO9}$ with $400 \times 400$ uniform cells]
  {\includegraphics[width=0.48\textwidth]{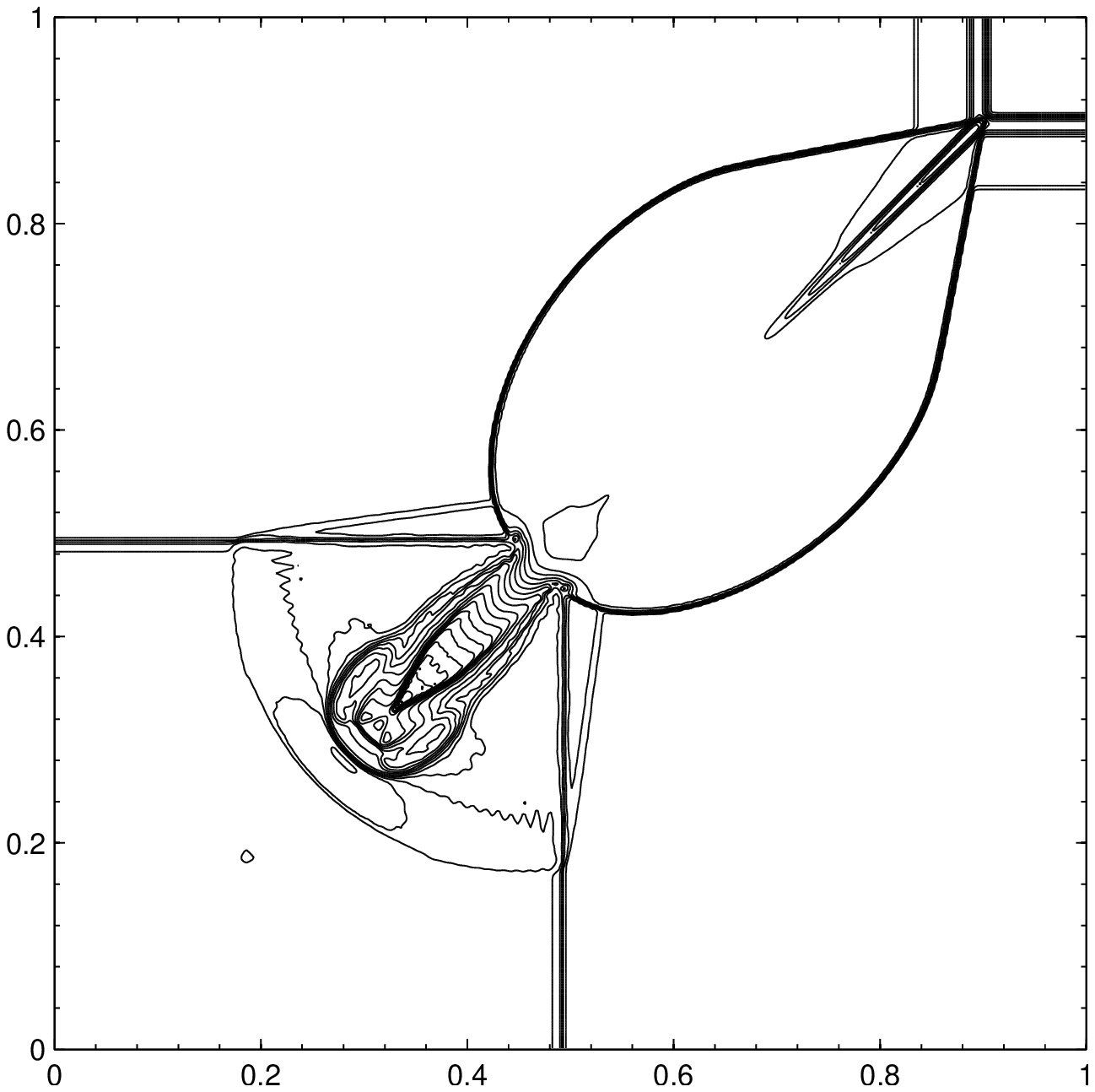}}
  \subfigure[${\tt PCPFDWENO5}$ with $1200 \times 1200$ uniform cells]
  {\includegraphics[width=0.48\textwidth]{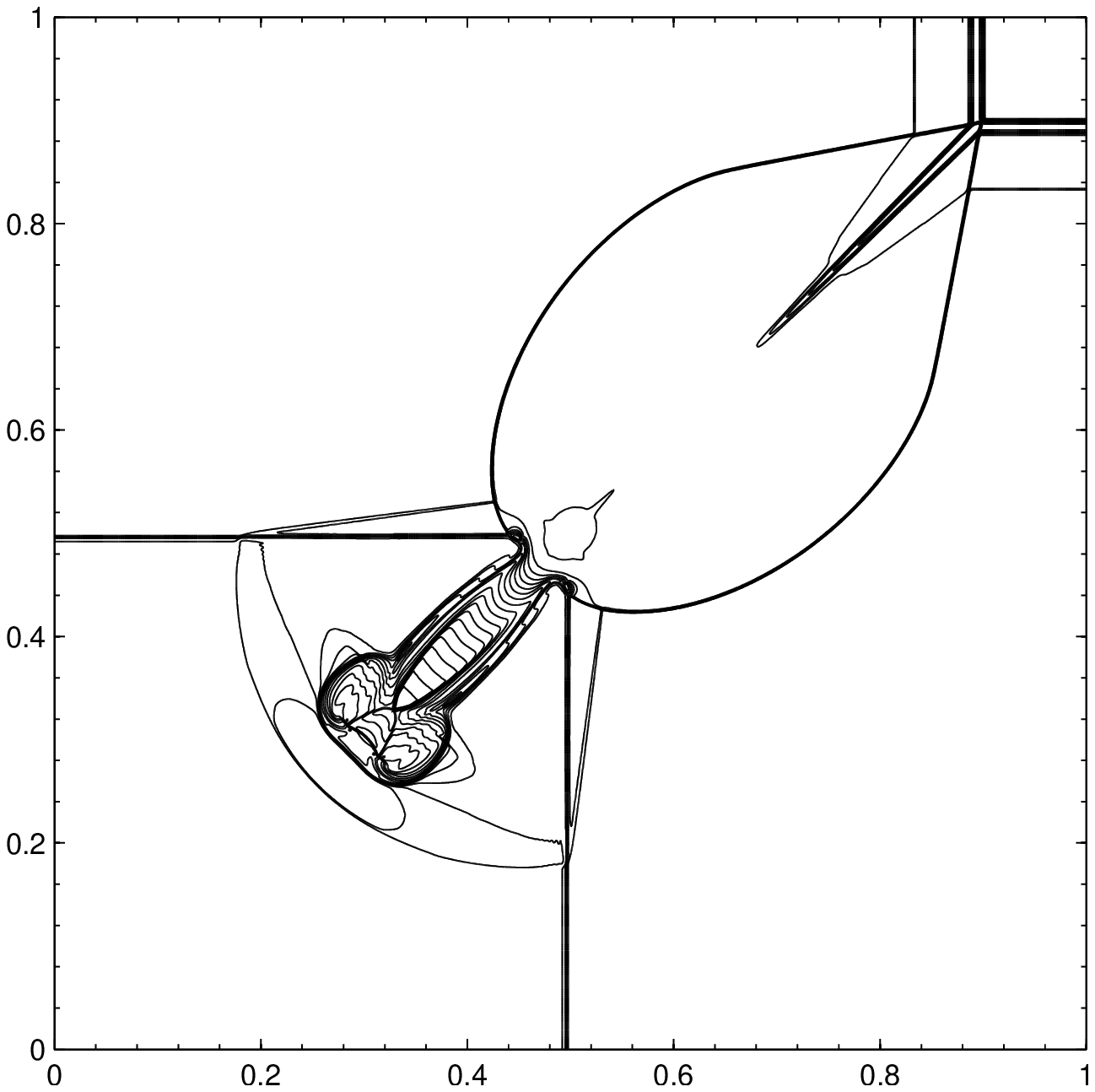}}
  \subfigure[${\tt PCPFDWENO9}$ with $800 \times 800$ uniform cells]
  {\includegraphics[width=0.48\textwidth]{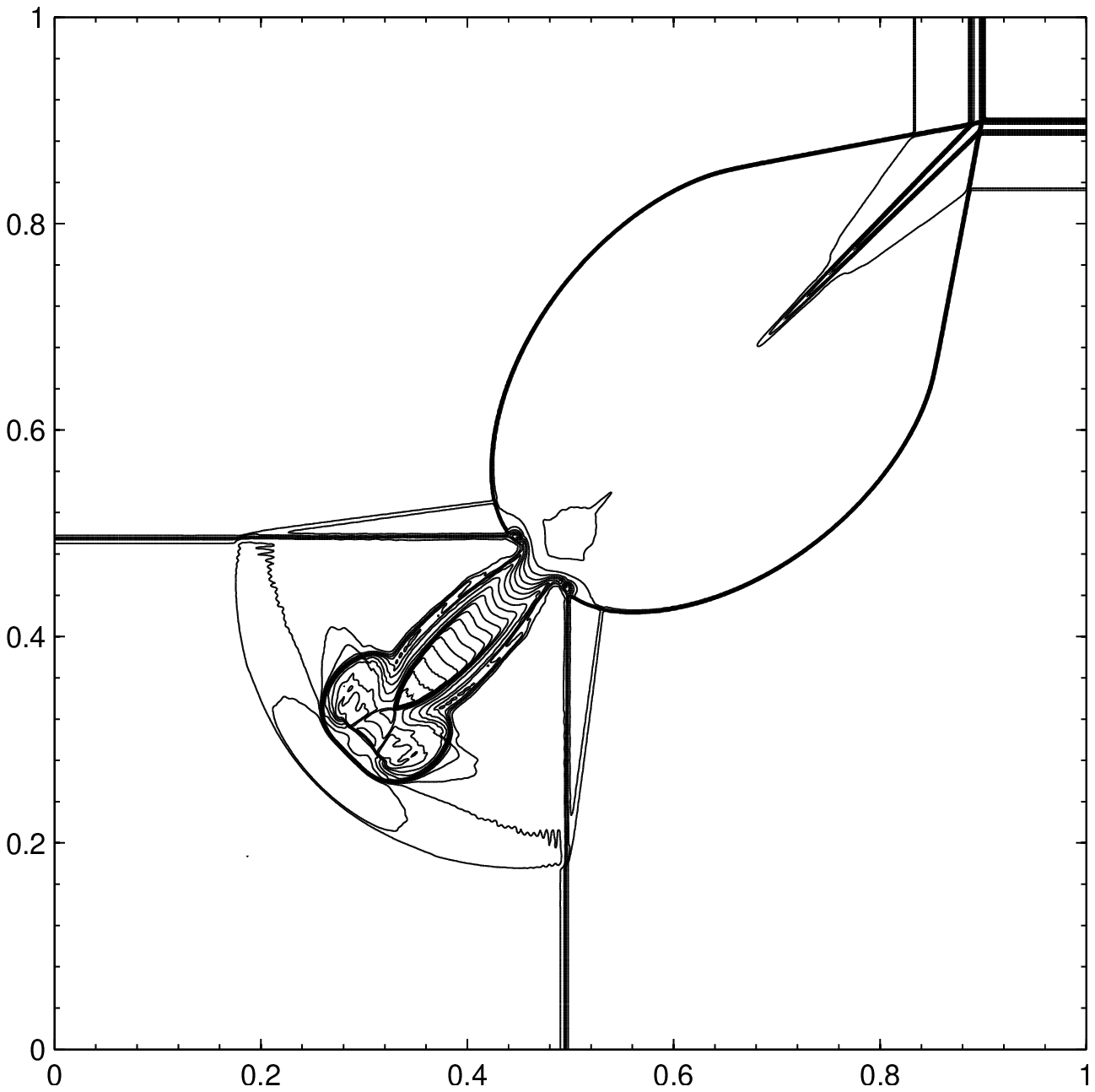}}
  \caption{\small The first 2D RP in Example \ref{example2DRPs}:
  The contours of the density
       logarithm $\ln \rho$ at $t=0.4$ within the domain $[0,1] \times [0,1]$ obtained by using {\tt PCPFDWENO5} and {\tt PCPFDWENO9}
         (25 equally spaced contour lines from $-6$ to $1.9$).
 }
  \label{fig:2DRP1}
\end{figure}

\begin{figure}[htbp]
  \centering
  \subfigure[]
  {\includegraphics[width=0.48\textwidth]{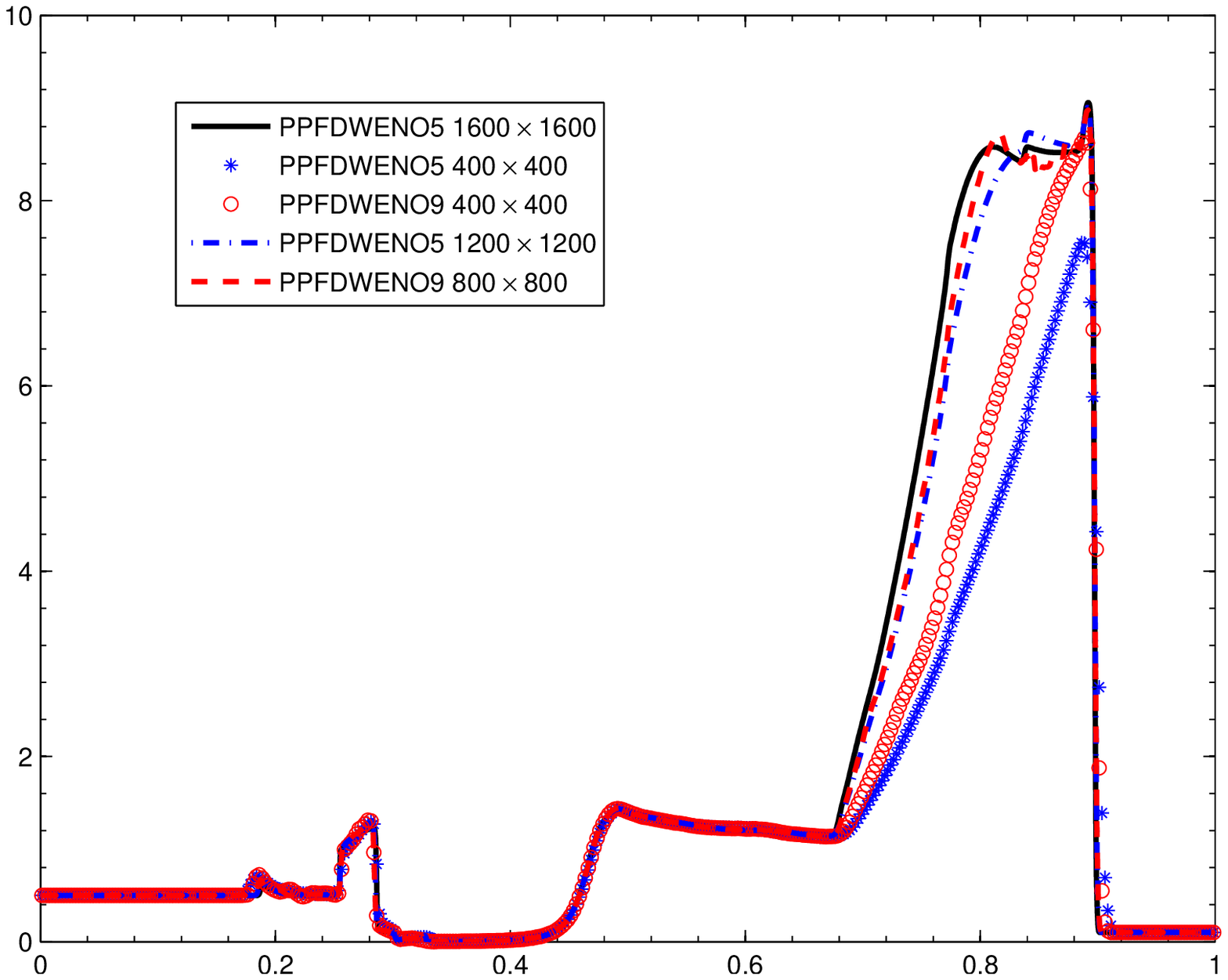}}
  \subfigure[]
  {\includegraphics[width=0.48\textwidth]{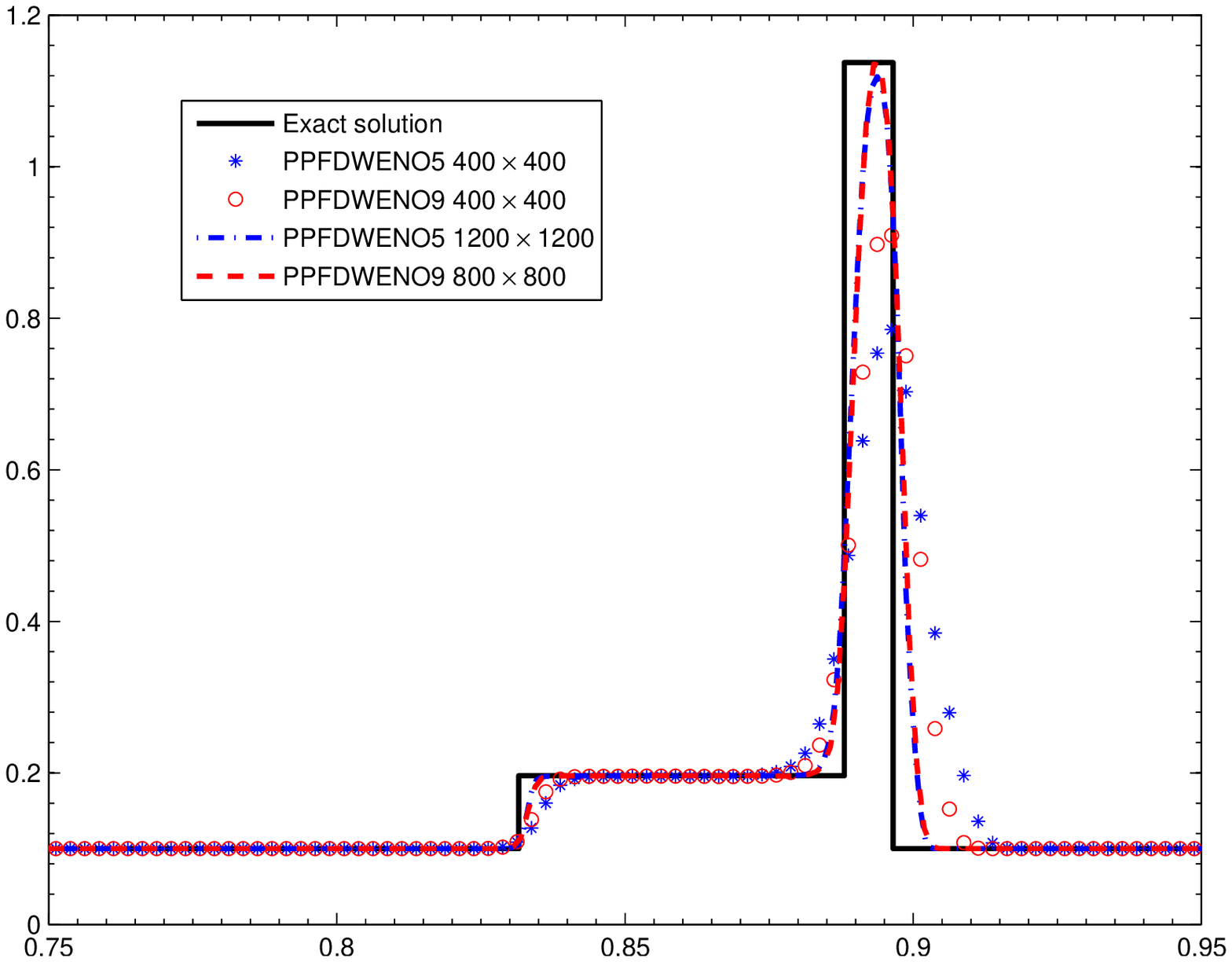}}
  \caption{\small Same as Fig. \ref{fig:2DRP1}, except for
    $\rho$ along the line $y=x$  within  the scaled interval $[0,1]$ (left) and  $\rho$ along the line $y=1$ within the closed interval $[0.75,0.95]$ (right).
 }
  \label{fig:2DRP1_Comparison}
\end{figure}

The initial data of the second 2D RP are
$$\vec V({x},{y},0)=
\begin{cases}(0.1,0,0,20)^T,& x>0.5,y>0.5,\\
  (0.00414329639576,0.9946418833556542,0,0.05)^T,&    x<0.5,y>0.5,\\
  (0.01,0,0,0.05)^T,&      x<0.5,y<0.5,\\
  (0.00414329639576,0,0.9946418833556542, 0.05)^T,&    x>0.5,y<0.5,
  \end{cases}$$
where both the left and bottom discontinuities are  contact discontinuities while both the
top and right are  shock waves with the speed of $-0.66525606186639$.
As the time increases, the maximal value of the fluid velocity may be very close to the speed of light.

\begin{figure}[htbp]
  \centering
  \subfigure[${\tt PCPFDWENO5}$]
  {\includegraphics[width=0.48\textwidth]{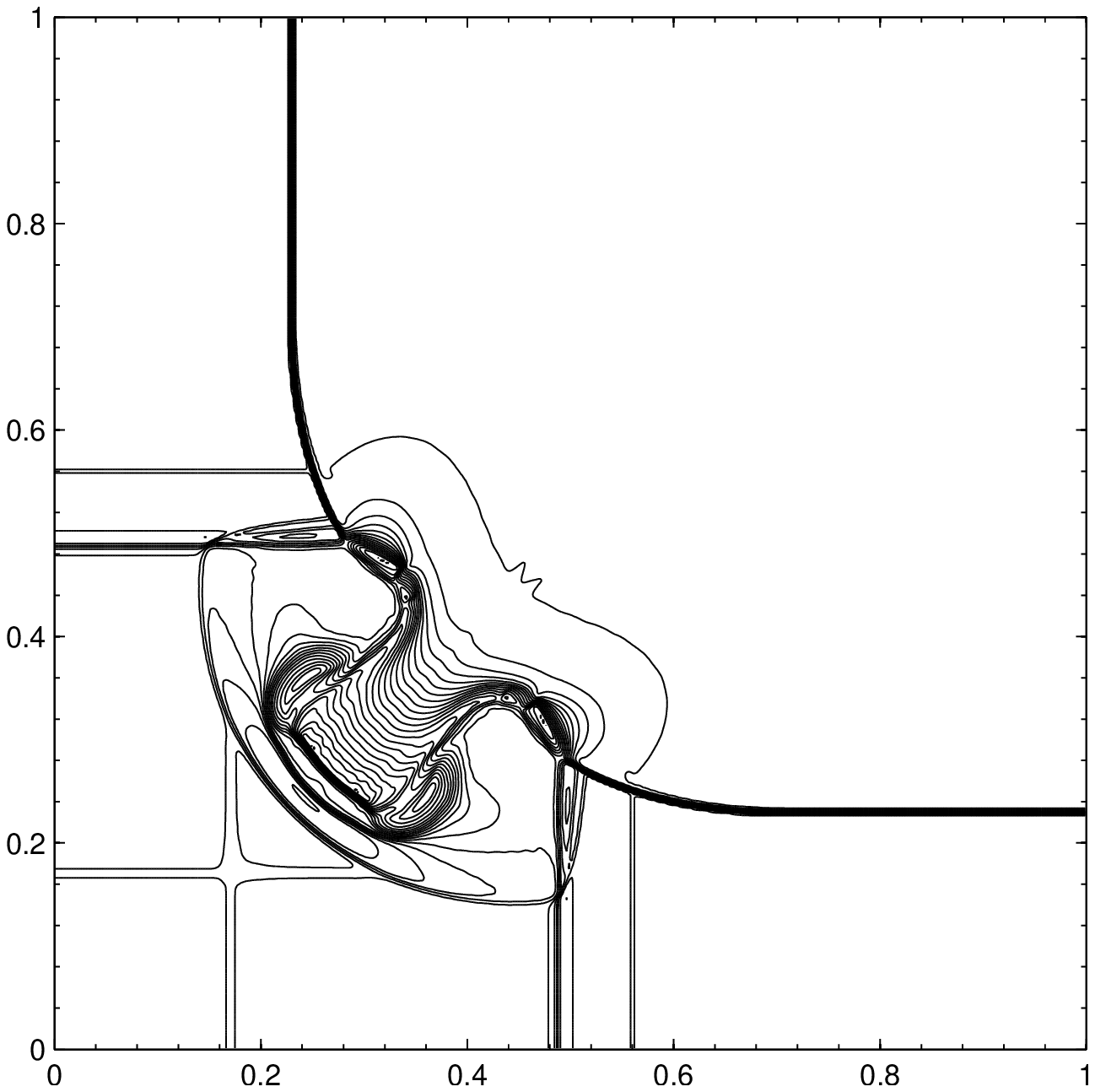}}
  \subfigure[${\tt PCPFDWENO9}$]
  {\includegraphics[width=0.48\textwidth]{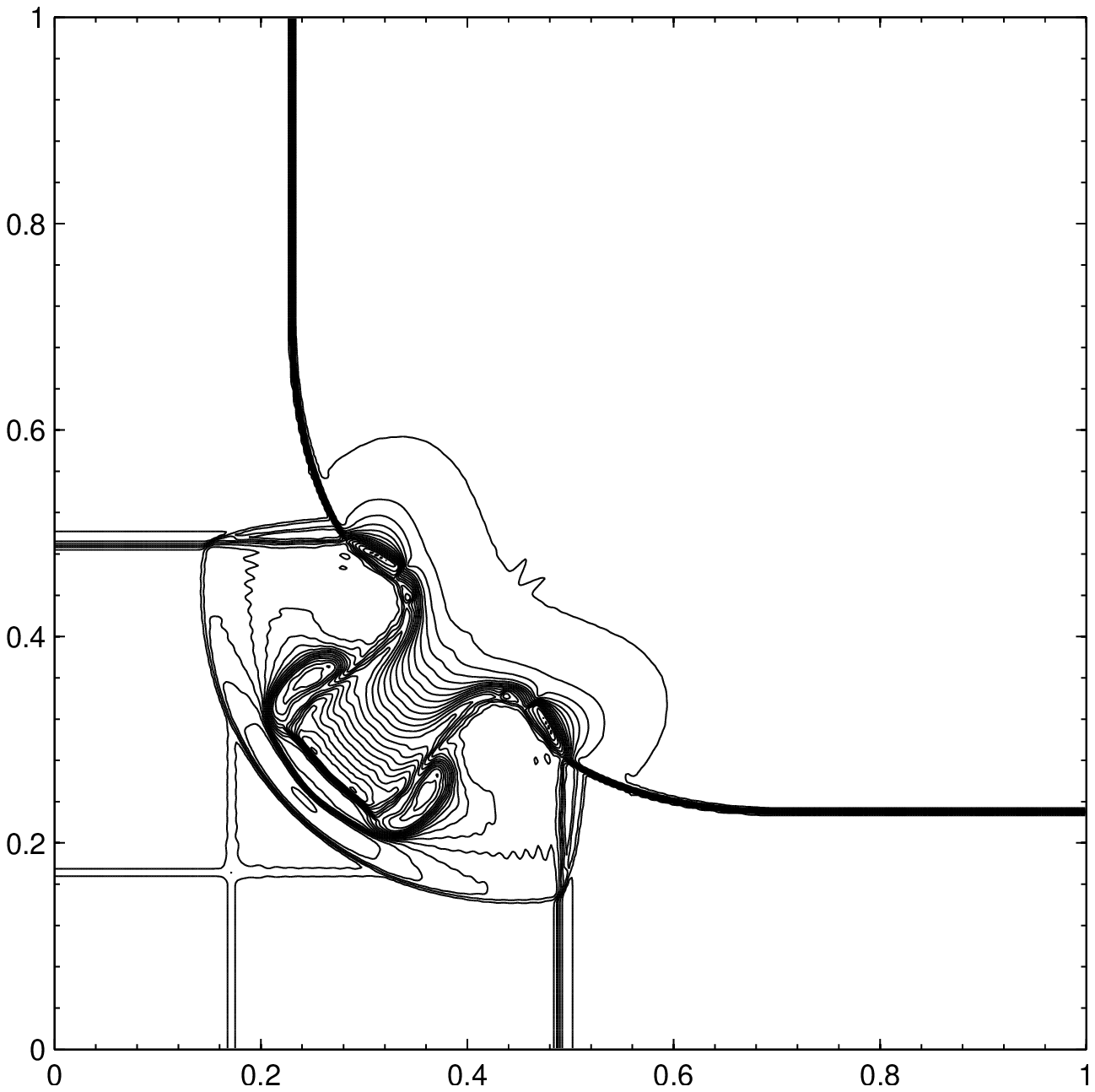}}
  \caption{\small The second 2D RP in Example \ref{example2DRPs}:
  The contours of the density
       logarithm $\ln \rho$ at $t=0.4$ within the domain $[0,1] \times [0,1]$ obtained by using {\tt PCPFDWENO5} and {\tt PCPFDWENO9}
        with $400 \times 400$ uniform cells (25 equally spaced contour lines from $-8$ to $-2.3$).
 }
  \label{fig:2DRP2}
\end{figure}

\begin{figure}[htbp]
  \centering
  {\includegraphics[width=0.48\textwidth]{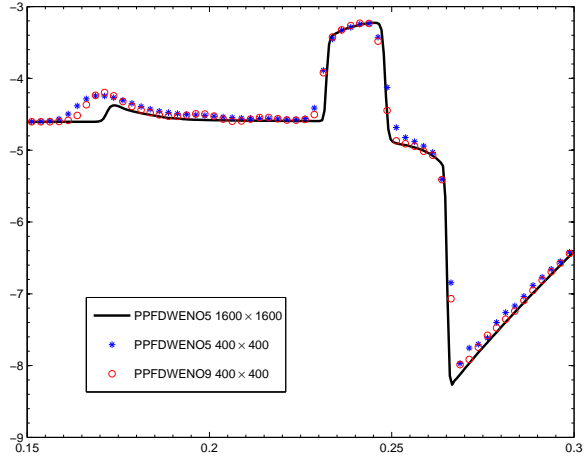}}
  \caption{\small
  Same as Fig. \ref{fig:2DRP2}, except for
      $\ln\rho$ along the line $y=x$ within  the scaled interval $[0,1]$.
 }
  \label{fig:2DRP2_Comparison}
\end{figure}

Fig.~\ref{fig:2DRP2} displays the contours of the density logarithm $\ln \rho$ at time $t = 0.4$ within
the unit square $[0,1] \times [0,1]$ obtained by using {\tt PCPFDWENO5} and {\tt PCPFDWENO9} with the mesh of $400 \times 400$ uniform cells.
It is seen that the
interaction of four initial discontinuities leads to the distortion of
both initial shock wave and the formation of a ``mushroom cloud'' starting from the point $(0.5,0.5)$ and expanding to the left bottom region, and
 the present schemes have good performance and strong robustness in such ultra-relativistic flow simulation, in which the fluid velocity reachs  0.9998458 locally, or the Lorentz factor may be not lesser than 56.95.
Plots of  $\ln \rho$ along the line $y=x$  in Fig. \ref{fig:2DRP2_Comparison}
further show that {\tt PCPFDWENO9} captures the structure better than  {\tt PCPFDWENO5}.

\end{example}

\begin{example}[Forward facing step problem] \label{exampleFFStep}\rm
The forward facing step problem was first introduced by Emery \cite{Emery1968},  has been
widely used to test the non-relativistic hydrodynamic codes, e.g. \cite{ChenTang2008}, and  extended to the ideal relativistic fluid, see \cite{Lucas-Serrano2004,ZhangMacfadyen2006}.

The same setup as in \cite{ZhangMacfadyen2006} is used here.
The wind tunnel is located in the domain $[0,3]\times [0,1]$
and contains a forward facing step with a height of $0.2$, which
starts from $x=0.6$ and continues along the length of the tunnel.
Initially it is filled with an ideal gas with the density of 1.4,
the velocity of 0.999, the Mach number of 3, and
the adiabatic index of $1.4$. 
The reflective boundary conditions are specified on the walls of the tunnel, while the inflow and outflow boundary conditions are specified at two ends of the tunnel ($x=0$ and 3).
Due to the forward facing step, the bow shock wave is formed and
then a Mach reflection happens at the top wall of the tunnel.
Later, the reflected shock wave incidents to the step and then
the regular reflection of the shock wave is caused and generates a second
reflected curved shock wave. Moreover, the step corner (0.6,0.2) is the center of a rarefaction fan and hence  a singular point of the flow.
%
Although the time-evolution of the flow is similar to the non-relativistic case, see e.g. \cite{ChenTang2008},  the present test is much more difficult than
the non-relativistic case because the bow shock wave
has a much fast speed and  there exist the large jumps
in the density and the pressure and the ultra-relativistic regime
where the Lorentz factor $W\gg 1$.

\begin{figure}[htbp]
  \centering
  {\includegraphics[width=0.75\textwidth]{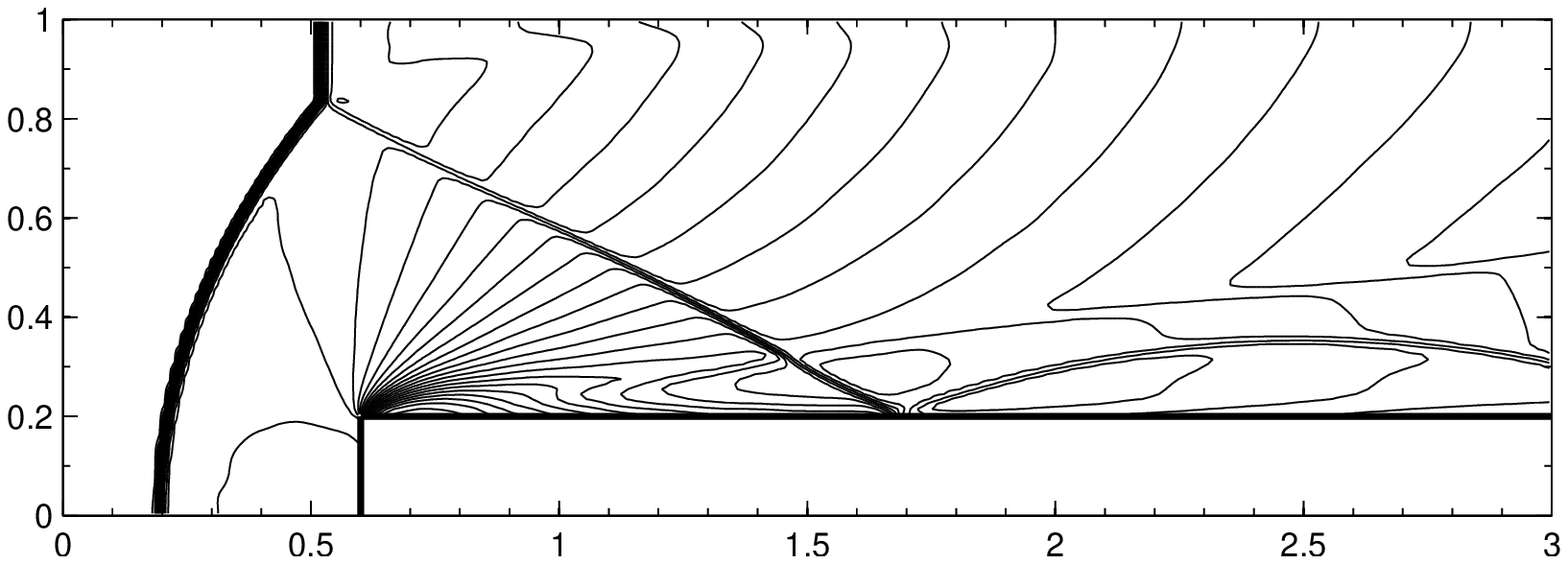}}
  {\includegraphics[width=0.75\textwidth]{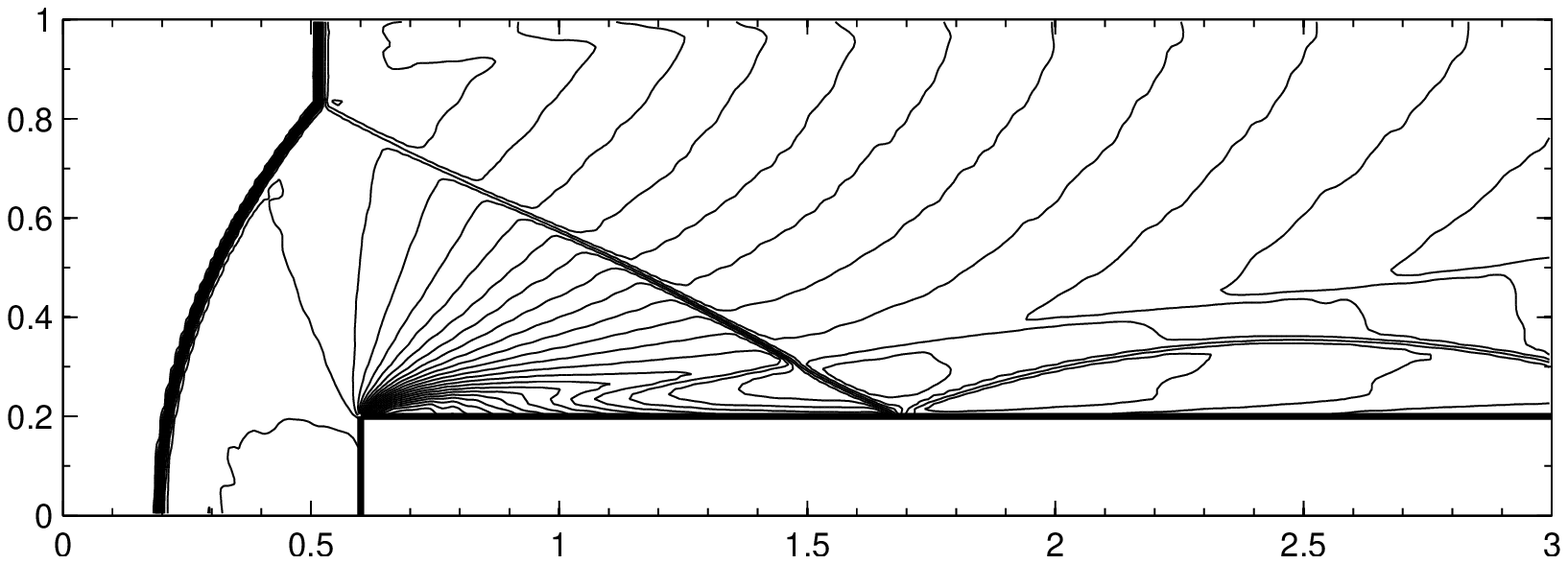}}
  \caption{\small Example \ref{exampleFFStep}:
  The contours of the rest-mass density
       logarithm $\ln \rho$ at $t=4$ obtained by using {\tt PCPFDWENO5} (top) and {\tt PCPFDWENO9} (bottom)
        with  $300 \times 100$ uniform cells for the domain $[0,3]\times [0,1]$
        (25 equally spaced contour lines from -0.86 to 4.64).
 }
  \label{fig:step}
\end{figure}

Fig. \ref{fig:step} gives the contours of the rest-mass density logarithm $\ln \rho$ at $t=4$ obtained by using the proposed {\tt PCPFDWENO5}
and {\tt PCPFDWENO9}  with  $300\times 100$ uniform cells for the domain $[0,3]\times [0,1]$.
Numerical results also show that across the Mach stem, the relative jumps   $\Delta\rho ={|\rho_R/\rho_L - 1|}$ and $\Delta p ={|p_R/p_L - 1|}$ are about 61.33 and  5223.99,
respectively, where the subscripts $L$ and $R$ denote corresponding left and right states.
Comparing our results with those in \cite{ZhangMacfadyen2006}, it can be
seen that the flow structures are well obtained
and the discontinuities, e.g. the bow and reflected shock waves and the Mach stem, are  well captured
with high resolution by using {\tt PCPFDWENO5} and {\tt PCPFDWENO9} without
 any artifical entropy fix around the step corner.

\end{example}

\begin{example}[Axisymmetric relativistic jets] \label{exampleJet}\rm
The last 2D example is to simulate two high-speed relativistic jet flows by solving
the axisymmetric RHD equations \eqref{eq:2Daxis}. The jet flows are ubiquitous in extragalactic radio sources associated with active galactic nuclei and the most compelling case for a special relativistic phenomenon.
Since there are the ultra-relativistic region,
 strong relativistic shock wave and shear flow, and interface instabilities etc.
 in the high-speed jet flow, simulating successfully such jet flow can be a real challenge,
 see e.g. \cite{Marti1994,Duncan:1994,Marti1997,Komissarov1998}.

\begin{figure}[htbp]
  \centering
  {\includegraphics[width=0.96\textwidth]{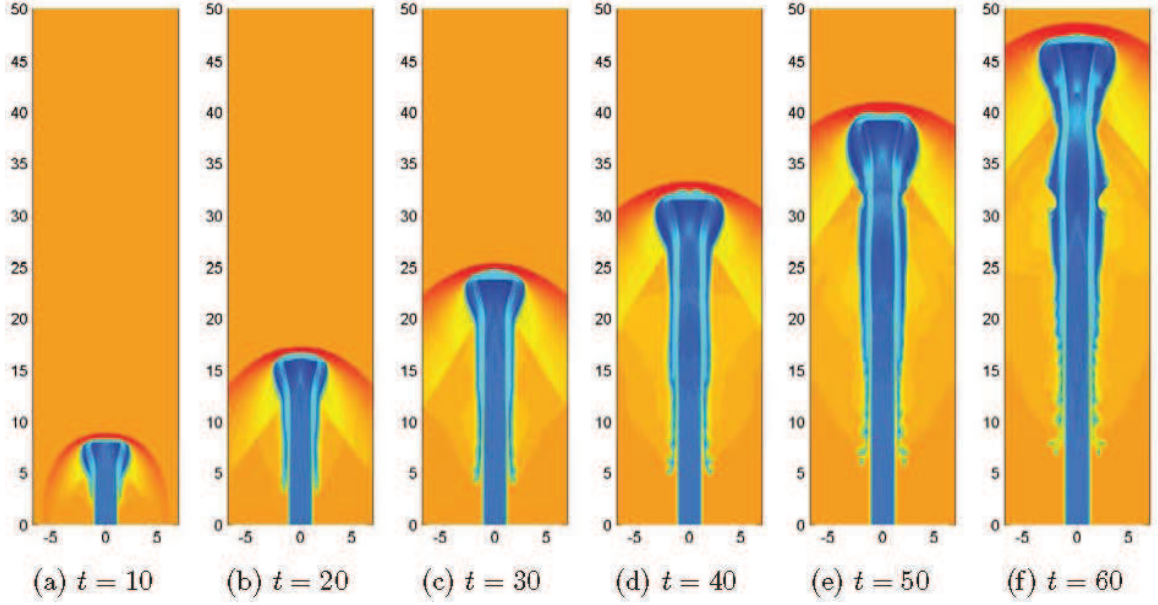}}
  \caption{\small The hot A1 model in Example \ref{exampleJet}:
  Schlieren images of the rest-mass density logarithm $\ln \rho$
  within the symmetrical domain $[-7,7]\times [0,50]$ at several different times obtained
        by using {\tt PCPFDWENO5}  with $280 \times 2000$ uniform cells.
 }
  \label{fig:jetA1}
\end{figure}

\begin{figure}[htbp]
  \centering
  {\includegraphics[width=0.96\textwidth]{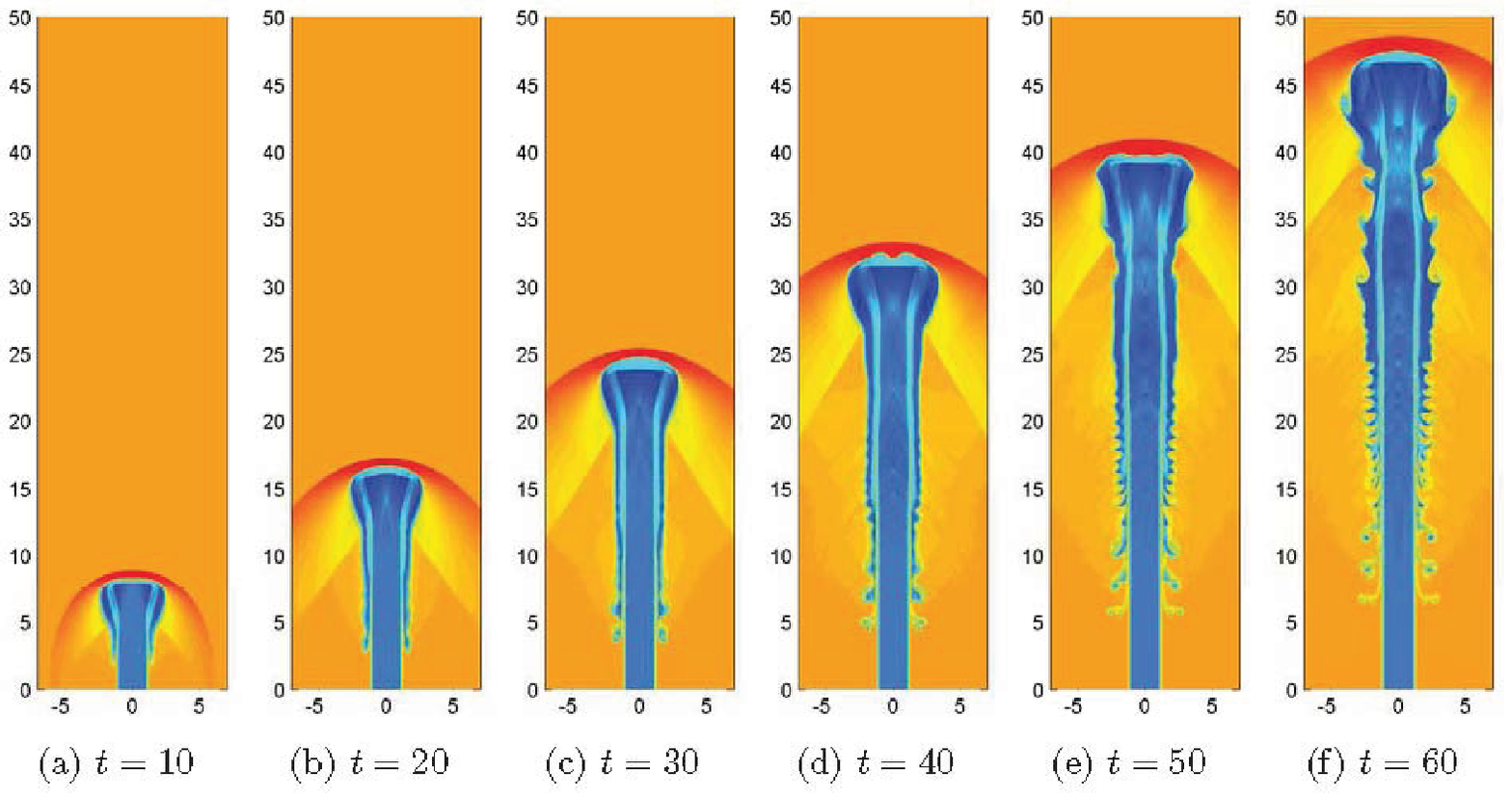}}
  \caption{\small Same as Fig. \ref{fig:jetA1} except for {\tt PCPFDWENO9}.
 }
  \label{fig:jetA1-b}
\end{figure}

The first test is a pressure-matched  hot A1 model,
see \cite{Marti1997}. In this model, the classical beam Mach number $M_b$
is near  the minimum Mach number $M^{\min}=v_b/\sqrt{\Gamma-1}$, the beam is
moving at speed $v_b=0.99$,
and relativistic effects from large beam internal energies
are important and comparable to the effects from fluid velocity
near the speed of light.
Initially, the computational domain $[0,7]\times [0,50]$ in the $(r,z)$ plane is filled with a static uniform medium with unit rest-mass density
with the adiabatic index of $4/3$.
A light relativistic jet is
injected in the $z$--direction through
the inlet part ($r \le 1$) of
the bottom boundary ($z=0$) with a density of 0.01,
a pressure equal to the ambient pressure, and a speed of $v_b$ or
a Lorentz factor of 7.09.
The relativistic Mach number $M_r :=M_b W /W_s$ is
about 9.97, where  $W_s=1/\sqrt{1-c_s^2}$  is the Lorentz factor
associated with the local sound speed and $M_b=v_b/c_s$ is  equal to 1.72.
The symmetrical boundary condition is specified at
 $r=0$,
 the fixed inflow beam condition is specified on the nozzle $\{z=0, r\le 1$\},
while outflow boundary conditions are  on other boundaries.

Figs. \ref{fig:jetA1} and \ref{fig:jetA1-b} display the schlieren images of the rest-mass density logarithm $\ln \rho$
within the domain $[-7,7]\times [0,50]$ at $t=10,20,30,40,50$ and 60 obtained  by using {\tt PCPFDWENO5} and {\tt PCPFDWENO9}  with $280 \times 2000$ uniform cells respectively. Compared them to those in \cite{Marti1997}, it is found that
 the time evolution of a light, relativistic 
 jet with large internal energy is well simulated by our schemes, and
 the Mach shock wave at the jet head is correctly and well captured during the whole simulation.
Although the internal structure is almost completely lacked because of the pressure equilibrium between the beam and its surroundings, and the beam/cocoon interface of the hot jets is very stable against the growth of pinch instabilities that would evolve into internal shock waves, the proposed schemes  still
clearly resolve the beam/cocoon interface and the Kelvin-Helmholtz type instability
at the beam/cocoon interface.





\begin{figure}[htbp]
  \centering
  {\includegraphics[width=0.8\textwidth]{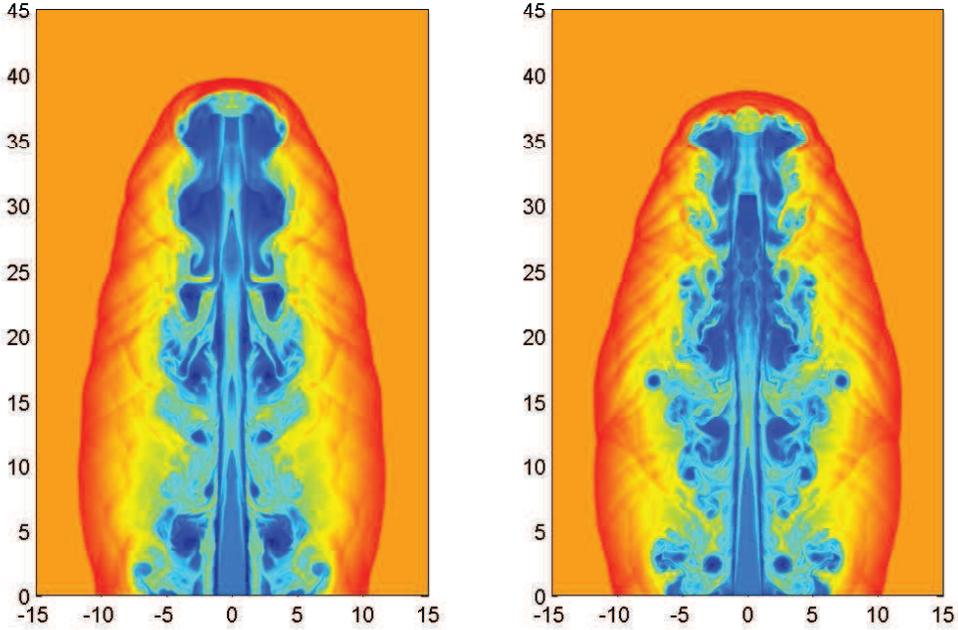}}
  \caption{\small The highly supersonic jet test of Example \ref{exampleJet}:
  Schlieren images of the rest-mass density logarithm $\ln \rho$ at $t=100$ obtained by {\tt PCPFDWENO5} (left) and {\tt PCPFDWENO9} (right)
        on the mesh of $384 \times 1152$ uniform cells.
 }
  \label{fig:jetC2}
\end{figure}


The second test is the  pressure-matched highly supersonic C2 jet model
($M_b \gg M^{\min}$, $\Gamma=5/3$, and $v_b = 0.99$), see  \cite{Marti1997,ZhangMacfadyen2006}.
Highly supersonic jets are also refered to  cold models and
 the relativistic effects from large beam speeds dominate in C2 jet model
so that there exist important differences between hot and cold relativistic jets.

The same setup as in \cite{ZhangMacfadyen2006} is used here
within the computational domain $[0,15]\times [0,45]$ in the $(r,z)$ plane.
The initial relativistic jet has the same rest-mass density and velocity  as those
in the above hot A1 model, but a higher Mach number of 6  (or corresponding relativistic Mach number of about 41.95) and a larger adiabatic index of $5/3$.

Fig. \ref{fig:jetC2} shows the schlieren images of the rest-mass density logarithm $\ln \rho$ within the symmetrical domain $[-15,15]\times [0,45]$ at $t=100$
 obtained by using {\tt PCPFDWENO5} and {\tt PCPFDWENO9}
with $384 \times 1152$ uniform cells. In our simulations, stable cocoons can be found in the early stages of the evolution, but these cocoons eventually evolve into large vortices producing turbulent structures, and the morphology and dynamics of the relativistic jets
in Fig. \ref{fig:jetC2} agree well with those obtained by using an
adaptive mesh refinement RHD code in \cite{ZhangMacfadyen2006}.
It is worth noting that the above-observed gross morphological features already
found in non-relativistic calculations.

\end{example}

\section{Conclusions}
\label{sec:conclude}

The paper developed  ($2r-1$)th-order accurate  finite difference WENO schemes for special RHD equations and proved that their solutions satisfied
physical properties: the positivity of the rest-mass density and the pressure and the bounds of the velocity. The key contributions were that the convexity and some mathematical properties of the admissible state set ${\mathcal G}$ were proved and  the concave function $q(\vec U)$ with respect to the conservative
vector $\vec U$ was  discovered.
The schemes were
built on the local Lax-Friedrich splitting,
the WENO reconstruction, the physical-constraints-preserving
flux limiter, and the high-order strong stability preserving time discretization.
They were considered as formal extensions of   positivity-preserving finite difference WENO schemes for the non-relativistic Euler equations  \cite{Hu2013}.
However,
developing physical-constraints-preserving methods  for the RHD system became much more difficult than the non-relativistic case
because of  the strongly coupling between the RHD equations,
no explicit expressions of
 the primitive variables $\vec V$ and the flux vector $\vec F_i$
in terms  of the conservative vector $\vec U$,
and one more physical constraint for the fluid velocity
in addition to the positivity of the rest-mass density and the pressure,
the non concavity of $p(\vec U)$, which was a key ingredient to enforce the positivity-preserving property for
the non-relativistic Euler equations. 


Several   numerical examples  demonstrated   accuracy, robustness, and effectiveness of the proposed physical-constraints-preserving schemes in solving
relativistic  problems with large Lorentz factor, or strong discontinuities, or low
rest-mass density or pressure etc., which involved
a 1D smooth problem, a 1D Riemann problem, a 1D blast wave interaction
problem, and a 1D shock heating problem, two
2D Riemann problems, and a forward facing step problem and two
axisymmetric jet flows in two dimensions.

Since the present physical-constraints-preserving limiting procedure  was independent on the reconstruction in space,
other high-order accurate reconstruction or interpolation could also be used to
replace the WENO reconstruction.
Moreover, it was possible that
the analysis results and the limiting procedure etc. could be extended
to develop high-order accurate physical-constraints-preserving finite volume schemes
for the RHD equations.


\section*{Acknowledgements}

This work was partially supported by
the National Natural Science Foundation
of China (Nos.  91330205  \& 11421101).
The authors would also like to thank
the referees for many useful suggestions.

\if false

\begin{appendices}


\section{Derivation of  }
\label{sec:AppendixA}

\end{appendices}

\fi

\end{document}